\providecommand{\U}[1]{\protect\rule{.1in}{.1in}}
\theoremstyle{plain}
\newtheorem{theorem}{Theorem}[section]
\newtheorem{corollary}[theorem]{Corollary}
\newtheorem{lemma}[theorem]{Lemma}
\newtheorem{proposition}[theorem]{Proposition}
\theoremstyle{definition}
\newtheorem{remark}[theorem]{Remark}
\numberwithin{equation}{section}
\numberwithin{theorem}{section}
\let\pdfoutput=\undefined\fi
\begin{document}

\title{Uniqueness and positivity issues in a quasilinear indefinite problem
\thanks{2020 \textit{Mathematics Subject Classification}. 35J25, 35J62,
35J92.} \thanks{\textit{Key words and phrases}. quasilinear, indefinite,
sublinear, uniqueness.} }
\author{Uriel Kaufmann\thanks{FaMAF-CIEM (CONICET), Universidad Nacional de
C\'{o}rdoba, Medina Allende s/n, Ciudad Universitaria, 5000 C\'{o}rdoba,
Argentina. \textit{E-mail address: }kaufmann@mate.uncor.edu} , Humberto Ramos
Quoirin \thanks{CIEM-FaMAF, Universidad Nacional de C\'{o}rdoba, (5000)
C\'{o}rdoba, Argentina. \textit{E-mail address: }humbertorq@gmail.com} ,
Kenichiro Umezu\thanks{Department of Mathematics, Faculty of Education,
Ibaraki University, Mito 310-8512, Japan. \textit{E-mail address:
}kenichiro.umezu.math@vc.ibaraki.ac.jp}
\and \noindent}
\maketitle

\begin{abstract}
We consider the problem
$$
-\Delta_{p}u=\lambda u^{p-1}+a(x)u^{q-1},\quad u\geq0\quad\mbox{ in }\Omega
,\quad\leqno{(P_\lambda)}
$$
under Dirichlet or Neumann boundary conditions. Here $\Omega$ is a smooth
bounded domain of $\mathbb{R}^{N}$ ($N\geq1$), $\lambda\in\mathbb{R}$,
$1<q<p$, and $a\in C(\overline{\Omega})$ changes sign. These conditions enable
the existence of dead core solutions for this problem, which may
admit multiple nontrivial
solutions. We show that for $\lambda<0$ the functional
\[
I_{\lambda}(u):=\int_{\Omega}\left(  \frac{1}{p}|\nabla u|^{p}-\frac{\lambda
}{p}|u|^{p}-\frac{1}{q}a(x)|u|^{q}\right)  ,
\]
defined in $X=W_{0}^{1,p}(\Omega)$ or $X=W^{1,p}(\Omega)$, has
\textit{exactly} one nonnegative global minimizer, and this one is the
\textit{only} solution of $(P_{\lambda})$ being positive in $\Omega_{a}^{+}$
(the set where $a>0$). In particular, this problem has at most one positive
solution for $\lambda<0$. Under some condition on $a$, the above uniqueness
result fails for some values of $\lambda>0$ as we obtain, besides the ground
state solution, a \textit{second} solution positive in $\Omega_{a}^{+}$. We
also provide conditions on $\lambda$, $a$ and $q$ such that these solutions
become positive in $\Omega$, and analyze the formation of dead cores for a
generic solution.

\end{abstract}

\section{Introduction and main results}

Let $\Omega$ be a bounded and smooth domain of $\mathbb{R}^{N}$ with $N\geq1$.
We deal with the problem
\[
\left\{
\begin{array}
[c]{lll}%
-\Delta_{p}u=\lambda u^{p-1}+a(x)u^{q-1} & \mathrm{in} & \Omega,\\
u\geq0 & \mathrm{in} & \Omega,\\
\mathbf{B}u=0 & \mathrm{on} & \partial\Omega,
\end{array}
\right.  \leqno{(P_{\lambda})}
\]
where $\Delta_{p}$ is the $p$-Laplacian operator, $\lambda\in\mathbb{R}$,
$a\in C(\overline{\Omega})$ changes sign and $1<q<p$, which is the so-called
\textit{subhomogeneous} (or \textit{sublinear} if $p=2$) case.

We consider either Dirichlet ($\mathbf{B}u=u$) or Neumann ($\mathbf{B}%
u=\partial_{\nu}u$, where $\nu$ is the outward unit normal to $\partial\Omega
$) homogeneous boundary conditions. Solutions of $(P_{\lambda})$ are
understood in the weak sense, i.e. as nonnegative critical points of the
energy functional
\[
I_{\lambda}(u):=\int_{\Omega}\left(  \frac{1}{p}|\nabla u|^{p}-\frac{\lambda
}{p}|u|^{p}-\frac{1}{q}a(x)|u|^{q}\right)  ,\quad u\in X,
\]
where $X=W_{0}^{1,p}(\Omega)$ in the Dirichlet case, and $X=W^{1,p}(\Omega)$
in the Neumann case. Standard regularity results for quasilinear elliptic
equations \cite{db,L} show that such solutions are in $C^{1,\alpha}%
(\overline{\Omega})$ for some $\alpha\in(0,1)$. If, in addition, $u>0$ in
$\Omega$, then we call it a \textit{positive solution} of $(P_{\lambda})$. We
are particularly interested in \textit{ground state} solutions (or least
energy solutions) of $(P_{\lambda})$, i.e. solutions $u$ such that
$I_{\lambda}(u)\leq I_{\lambda}(v)$ for any solution $v$ of $(P_{\lambda})$.

The most striking feature of $(P_{\lambda})$ under the above conditions on $a$ and $q$ is the possible occurence of
\textit{dead cores} (or \textit{free boundaries}), i.e. regions where
solutions vanish (see \cite{D} for a survey on this subject). As a matter of
fact \cite[Proposition 1.11]{D} shows that every solution of $\left(
P_{0}\right)  $ has a dead core if $a$ is too negative in some part of
$\Omega$ (see also \cite{bandle, BPT,KRQU16, KRQU3} for some examples of dead
core solutions with $p=2$). Whenever it occurs, this phenomenon provides a
rich structure to $S_{\lambda}$, the set of nontrivial solutions of
$(P_{\lambda})$, as a dead core solution may generate multiple elements in
$S_{\lambda}$ (see Remarks \ref{r0}(3) and \ref{rdc} below). This scenario is
considerably different from the Dirichlet one for $\left(  P_{0}\right)  $ and
$a>0$ (the \textit{strictly definite} case, see \cite{BO,DS}) or $a\geq0$ (the
\textit{definite} case, see \cite{IO}). Indeed, in view of the strong maximum
principle and the Hopf Lemma \cite{Va}, which apply in this situation, we have
$S_{\lambda}\subset\mathcal{P}^{\circ}$, where
\[
\mathcal{P}^{\circ}:=\left\{
\begin{array}
[c]{ll}%
\left\{  u\in C_{0}^{1}(\overline{\Omega}):u>0\ \mbox{in $\Omega$},\ \partial
_{\nu}u<0\ \mbox{on $\partial \Omega$}\right\}   &
\mbox{if ${\bf B }u= u$},\medskip\\
\left\{  u\in C^{1}(\overline{\Omega}%
):u>0\ \mbox{on $\overline{\Omega}$}\right\}   &
\mbox{if ${\bf B }u=\partial_\nu u$}.
\end{array}
\right.
\]
Since in this case $(P_{0})$ has at most one solution in $\mathcal{P}^{\circ}%
$, one deduces that $S_{0}$ is a singleton.

Note that for $q\geq p$ (the \textit{superhomogenous} and \textit{homogeneous}
cases) we also have $S_{\lambda}\subset\mathcal{P}^{\circ}$, even if $a$
changes sign (the \textit{indefinite} case). However, in such situation 
$S_{\lambda}$ is not a
singleton in general. We refer to \cite{AT,ALG,BCN,BCN2,BD,I,Ou} for an
overview of $(P_{\lambda})$ in the superhomogeneous indefinite case.

We shall proceed with the investigation carried out in \cite{KRQUpp} for
$(P_{0})$. For an account on this problem when
$p=2$, we refer to the recent review paper \cite{rimut}. In the sequel we
state and discuss our main results.

\subsection{Uniqueness for $\lambda<0$}

In \cite{KRQUpp} we proved that $I_{0}$ has a unique nonnegative global
minimizer, and this one turns out to be the only solution of $(P_{0})$
positive in
\[
\Omega_{a}^{+}:=\{x\in\Omega:a(x)>0\}.
\]
In particular, uniqueness of positive solutions holds for $(P_{0})$. We extend
this result to any $\lambda<0$.

\begin{theorem}
\label{t1} Assume that $\lambda<0$. Then $(P_{\lambda})$ has exactly one
ground state solution $U_{+}=U_{+}(\lambda)$, which has the following properties:

\begin{enumerate}
\item $U_{+}$ is the unique nonnegative global minimizer of $I_{\lambda}$.

\item $U_{+}$ is the only solution of $(P_{\lambda})$ such that $U_{+} >0$ in
$\Omega_{a}^{+}$.

\item The map $\lambda\mapsto U_{+}(\lambda)$ is increasing and continuous from $(-\infty,0)$ to $X$,  and
$U_{+}(\lambda)\rightarrow0$ in $X$ as $\lambda\rightarrow-\infty$.
\end{enumerate}
\end{theorem}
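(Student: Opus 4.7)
The proof decomposes along the lines of items (1)--(2) and (3).

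\textbf{Existence and positivity of a nonnegative minimizer.} Since $\lambda<0$ we have $I_\lambda(u)\geq \frac{1}{p}\|\nabla u\|_p^p + \frac{|\lambda|}{p}\|u\|_p^p - \frac{\|a\|_\infty}{q}\|u\|_q^q$, which is coercive on $X$ because $q<p$. Together with weak lower semicontinuity of $I_\lambda$ and the invariance $I_\lambda(|u|)\leq I_\lambda(u)$, the direct method yields a nonnegative global minimizer $U_+$; standard regularity places it in $C^{1,\alpha}(\overline\Omega)$ and it solves $(P_\lambda)$. To see $U_+>0$ on $\Omega_a^+$, assume $U_+$ vanishes on a set $\omega\subset\Omega_a^+$ of positive measure, and test with a bump $\phi\in C_c^\infty(\omega)$, $\phi\geq 0$, $\phi\not\equiv 0$: then
\[ I_\lambda(U_++t\phi)-I_\lambda(U_+)=\int_\omega\Bigl(\tfrac{t^p}{p}|\nabla\phi|^p-\tfrac{\lambda t^p}{p}\phi^p-\tfrac{a(x)\,t^q}{q}\phi^q\Bigr),\]
and the negative $t^q$-term dominates for small $t>0$ since $q<p$, contradicting minimality. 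V\'azquez's strong maximum principle then yields $U_+>0$ throughout $\Omega_a^+$.

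\textbf{Uniqueness (items 1 and 2).} Both uniqueness claims reduce to the stronger statement: any two nonnegative solutions $U,V$ of $(P_\lambda)$ both positive on $\Omega_a^+$ must coincide. Adapting the Picone--Allegretto--Huang strategy of \cite{KRQUpp}, I would test the weak formulation of $(P_\lambda)$ for $U$ against $V^p/(U+\varepsilon)^{p-1}$ and for $V$ against $U^p/(V+\varepsilon)^{p-1}$, apply the Picone inequality for the $p$-Laplacian to each, and pass to the limit $\varepsilon\downarrow 0$ by dominated convergence. Summing the two resulting inequalities produces an integral relation whose bulk term on $\{U,V>0\}$ is
\[ \int_{\{U,V>0\}} a(x)\,(U^{p}-V^{p})(U^{q-p}-V^{q-p})\,dx, \]
nonpositive on $\Omega_a^+$ and nonnegative on $\Omega_a^-$ by the strict monotonicity of $s\mapsto s^{q-p}$ ($q<p$), while the residual terms supported on the dead cores $\{U=0\}$ and $\{V=0\}$ are controlled by the strictly positive quantity $|\lambda|$. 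Careful bookkeeping then forces $U=V$ a.e.\ on $\{U,V>0\}$ and $U=V=0$ on the dead cores, yielding $U\equiv V$. The principal technical obstacle is the rigorous $\varepsilon\to 0$ passage in the presence of dead cores; the coercive term $|\lambda|\,\|u\|_p^p$ contributed by $\lambda<0$ is precisely what absorbs the dead-core residuals, which is why the case $\lambda=0$ required the delicate separate analysis of \cite{KRQUpp}.

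\textbf{Properties of $\lambda\mapsto U_+(\lambda)$ (item 3).} Fix $\lambda_1<\lambda_2<0$. Since $\lambda_1 U_+(\lambda_2)^{p-1}\leq \lambda_2 U_+(\lambda_2)^{p-1}$, $U_+(\lambda_2)$ is a weak supersolution of $(P_{\lambda_1})$; initiating the sub/supersolution method from a small subsolution of the form $\varepsilon\psi$ with $\psi\in C_c^\infty(\Omega_a^+)$ (whose subsolution property follows from $q<p$) yields a solution of $(P_{\lambda_1})$ positive on $\Omega_a^+$, which the uniqueness step identifies as $U_+(\lambda_1)$, giving $U_+(\lambda_1)\leq U_+(\lambda_2)$. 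Continuity on $(-\infty,0)$ follows from uniform $W^{1,p}$-bounds of $U_+(\lambda)$ over compact $\lambda$-intervals, extraction of weak limits along sequences $\lambda_n\to\lambda$, identification of each limit as a nonnegative solution of $(P_\lambda)$ positive on $\Omega_a^+$ (the bump argument above is stable under passage to the limit), and upgrading to strong convergence via the $(S_+)$-property of $-\Delta_p$. Finally, testing the equation against $U_+(\lambda)$ itself yields
\[ |\lambda|\,\|U_+(\lambda)\|_p^p \leq \|a\|_\infty\,\|U_+(\lambda)\|_q^q \leq C\,\|U_+(\lambda)\|_p^q,\]
so $\|U_+(\lambda)\|_p=O(|\lambda|^{-1/(p-q)})$ as $\lambda\to-\infty$, and the equation then forces $\|\nabla U_+(\lambda)\|_p\to 0$.
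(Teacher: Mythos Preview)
Your uniqueness argument for items (1)--(2) has a genuine gap. Granting the formal limit $\varepsilon\to 0$, the classical Picone scheme with $p$-th power test functions yields
\[
\int_{\{U,V>0\}} a(x)\,(U^{q-p}-V^{q-p})(U^p-V^p)\,dx \ + \ (\text{dead-core terms}) \ \geq \ |\lambda|\,(\text{nonnegative terms}).
\]
You correctly observe that the pointwise factor $(U^{q-p}-V^{q-p})(U^p-V^p)$ is $\le 0$ everywhere (since $q<p$), so the integrand is $\le 0$ on $\Omega_a^+$ and $\ge 0$ on $\Omega_a^-$. But this indefiniteness is precisely the obstruction: the positive contribution from $\Omega_a^-$ can dominate, and the inequality imposes \emph{no} constraint forcing $U\equiv V$. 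The $|\lambda|$-term sits on the right-hand side with the favorable sign, so it does not ``absorb'' anything; the problematic term is the $\Omega_a^-$ bulk, not the dead-core residuals. This is why the D\'{\i}az--Saa/Picone argument, which works cleanly for $a\ge 0$, breaks down for sign-changing $a$.

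The paper circumvents this via an indirect two-step route that is genuinely different from your sketch. First (Proposition~\ref{um}) it proves uniqueness of the nonnegative minimizer $V_+$ of $E_\lambda$ over $\mathcal{S}_+$ using the \emph{hidden convexity} of $u\mapsto \int|\nabla u|^p$ along the path $t\mapsto(tU^q+(1-t)V^q)^{1/q}$; crucially $\lambda<0$ makes $-\lambda\int|u|^p$ convex along this path as well. Second (Proposition~\ref{us}) it shows that any solution $u>0$ in $\Omega_a^+$ must be a multiple of $V_+$ by testing with $V_+^{\,q}/(u+\varepsilon)^{q-1}$ (exponent $q$, not $p$) and invoking the \emph{generalized} Picone inequality $|\nabla u|^{p-2}\nabla u\cdot\nabla(v^q/u^{q-1})\le |\nabla u|^{p-q}|\nabla v|^q$ (Lemma~\ref{lp}). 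This yields, after H\"older and the elementary inequality of Lemma~\ref{in},
\[
1\le \int_{\Omega_u^+} aV_+^q \le E_\lambda(u)^{\frac{p-q}{p}}\,m_+^{\frac{q}{p}},
\]
which forces $(\int au^q)^{-1/q}u$ to achieve $m_+$. The step $\int_{\Omega_u^+}aV_+^q\ge 1$ is where the hypothesis $u>0$ on $\Omega_a^+$ is used, and it replaces the failed sign analysis on $\Omega_a^-$.

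Two smaller points. In your positivity-on-$\Omega_a^+$ step, the set $\omega=\{U_+=0\}\cap\Omega_a^+$ is a priori only closed in $\Omega_a^+$, so you cannot take $\phi\in C_c^\infty(\omega)$ directly; apply the strong maximum principle first to obtain an open ball in $\omega$ (cf.\ Lemma~\ref{l}(3)). In your monotonicity step, the bump $\varepsilon\psi$ with $\psi\in C_c^\infty(\Omega_a^+)$ need not be a weak subsolution near $\partial(\operatorname{supp}\psi)$, and even if it is, the resulting solution is only guaranteed positive where $\psi>0$, not on all of $\Omega_a^+$ (problematic when $\Omega_a^+$ is disconnected). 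The paper instead uses $U_+(\lambda)$ as a subsolution of $(P_{\lambda'})$ for $\lambda<\lambda'<0$ and a large constant as supersolution, which immediately gives a solution $\ge U_+(\lambda)$ and hence positive on all of $\Omega_a^+$.
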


From Theorem \ref{t1}, we infer:

\begin{corollary}
\label{c1} Assume that $\lambda<0$. Then:

\begin{enumerate}
\item $(P_{\lambda})$ has at most one positive solution (which is $U_{+}$,
whenever it exists).

\item If $\Omega_{a}^{+}$ is connected then $U_{+}$ is the unique nontrivial
solution of $(P_{\lambda})$.
\end{enumerate}
\end{corollary}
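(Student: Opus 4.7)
My plan is to deduce both statements directly from Theorem \ref{t1}(2), which characterizes $U_+$ as the unique solution of $(P_\lambda)$ positive on $\Omega_a^+$. Part (1) is then essentially immediate: any positive solution $u$ is in particular positive on $\Omega_a^+$, so $u = U_+$ by Theorem \ref{t1}(2).

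For Part (2), fix a nontrivial solution $v$ of $(P_\lambda)$. It suffices to show $v > 0$ on $\Omega_a^+$, since Theorem \ref{t1}(2) then yields $v = U_+$. I would argue in two steps. First, I would rule out $v \equiv 0$ on $\Omega_a^+$: if this failed, then $\{v > 0\} \subset \Omega \setminus \Omega_a^+ \subset \{a \le 0\}$, and using $v$ itself as a test function in the weak formulation gives
\[
\int_\Omega |\nabla v|^p \;=\; \lambda \int_\Omega v^p + \int_\Omega a\, v^q \;\le\; 0,
\]
since $\lambda < 0$ and $a\,v^q \le 0$ a.e.\ on $\Omega$. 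Hence $\nabla v \equiv 0$, so $v$ is constant on the connected domain $\Omega$; this constant must vanish either because $v \in W_0^{1,p}(\Omega)$ (Dirichlet case) or because $v$ already vanishes on the nonempty open set $\Omega_a^+$ (Neumann case), contradicting nontriviality. Second, on $\Omega_a^+$ we have $a > 0$ and $\lambda < 0$, so $v \ge 0$ satisfies
\[
-\Delta_p v + |\lambda|\,v^{p-1} \;=\; a(x)\,v^{q-1} \;\ge\; 0 \quad \text{in } \Omega_a^+,
\]
with $v \not\equiv 0$ there by the first step. By the connectedness of $\Omega_a^+$, V\'azquez's strong maximum principle then yields $v > 0$ throughout $\Omega_a^+$, completing the proof.

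The only genuinely delicate point is the first step of Part (2), namely ruling out a nontrivial solution whose support lies entirely in $\{a \le 0\}$. This is where the sign assumption $\lambda < 0$ enters, via the energy identity above. The remainder is a standard strong maximum principle argument, and the connectedness of $\Omega_a^+$ is used precisely to propagate positivity from one point to the whole set; without it, a solution positive on some components of $\Omega_a^+$ and vanishing on others would not be excluded by this approach.
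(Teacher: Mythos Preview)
Your proof is correct and follows essentially the same route as the paper. Part~(1) is immediate from Theorem~\ref{t1}(2), as you note. For Part~(2), the paper (see Proposition~\ref{puu}) first shows that any nontrivial solution lies in $\mathcal{A}_+$ via the identity $E_\lambda(u)=\int_\Omega a\,u^q$ combined with $E_\lambda(u)>0$ for $\lambda<\lambda_1$, which in particular forces $u\not\equiv 0$ on $\Omega_a^+$; you obtain this last conclusion directly by the same energy identity under the contrapositive assumption $v\equiv 0$ on $\Omega_a^+$. Both then invoke V\'azquez's strong maximum principle on the connected set $\Omega_a^+$ and conclude via Theorem~\ref{t1}(2).
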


\begin{remark}
\label{r0} \strut

\begin{enumerate}
\item Theorem \ref{t1} and Corollary \ref{c1} are, to the best of our
knowledge, new for $p\neq2$, for both Dirichlet and Neumann problems.
Furthermore, several results therein are new even for the Dirichlet problem
with $p=2$. We refer the reader to \cite[Theorem 1.1]{alama} and \cite{B}, for
previous results with $p=2$, in the Neumann and Dirichlet cases, respectively.

\item A natural question arising in connection with Theorem \ref{t1}(2) is
whether $U_{+}$ is a positive solution. Keeping $\lambda\leq0$, $q$, and
$a^{+}$ (the positive part of $a$) fixed, Theorem \ref{dc} below implies that
$U_{+}$ has a dead core if, roughly speaking, $a^{-}$ (the negative part of
$a$) is large enough . On the other hand, $U_{+}$ becomes a positive solution
as $a^{-}$ gets close to $0$, see Proposition \ref{pa}. The positivity of
$U_{+}$ also holds for $q$ close (and smaller than) $p$, and for $\lambda$
close (and smaller than) $0$ in the Neumann case if $\int_{\Omega}a>0$, cf.
Theorems \ref{t4} and \ref{t3} below.

\item When $\Omega_{a}^{+}$ is disconnected Remark \ref{rdc} below shows that,
for any $\lambda\leq0$, $(P_{\lambda})$ may have multiple solutions satisfying
$u\not \equiv 0$ in $\Omega_{a}^{+}$, so Theorem \ref{t1}(2) does \textit{not}
hold with `$U_{+}>0$' replaced by `$U_{+}\not \equiv 0$ '. In particular,
suppose that $\Omega_{a}^{+}=\displaystyle\cup_{i\in I}\Omega_{i}$, where
$\Omega_{i}$ are connected and open, and $J\subsetneq I$. Then $(P_{\lambda})$
may have several solutions positive in $\cup_{i\in J}\Omega_{i}$, so Theorem
\ref{t1}(2) does \textit{not} hold with `$\Omega_{a}^{+}$' replaced by
`$\cup_{i\in J}\Omega_{i}$'. Some concrete examples where these multiplicity
results appear for $\left(  P_{0}\right)  $ and
$p=2$ can be found in the proofs of \cite[Theorem 1.4(ii)]{NoDEA} and
\cite[Proposition 5.1]{KRQU3}. Let us also mention that these features were
already observed in \cite{bandle, BPT}.

\item The assertion in Theorem \ref{t1}(3) holds also in $(-\infty,0]$ for the Dirichlet problem, and for the Neumann one if $\int_\Omega a<0$. If $\int_\Omega a>0$ then $U_+(\lambda)$ blows up as $\lambda \to 0^-$, see Proposition \ref{pin}, Remark \ref{rin}, and Section 4.
\end{enumerate}
\end{remark}

Theorem \ref{t1} and Corollary \ref{c1} follow a long history of uniqueness
results for subhomogeneous and sublinear type problems, dating back at least
to \cite{KC}. The existence and uniqueness of a nontrivial solution for the
Dirichlet problem with $\lambda\leq0$ and $a>0$ in $\Omega$ has been
established in \cite{BO} for $p=2$ and in \cite{DS} for $p>1$ (see also
\cite{BK}). The indefinite case proves to be more delicate to handle, as shown
in \cite[Theorem 1.1]{alama}, \cite[Theorem 2.3]{bandle}, \cite[Lemma 3.1 and
Theorem 3.1]{BPT}, \cite[Theorem 2.1]{DSu}, which hold for $p=2$ (see also
\cite[Theorem A]{KRQUpp} for a precise description of these results when
$\lambda=0$). The proofs of these uniqueness results, which hold also for
sublinear nonlinearities that are not powerlike, are rather direct and make
use of the strong maximum principle. Here we shall proceed as in \cite{KRQUpp}
and follow an undirect strategy, relying on the combination of the following
results for $\lambda\leq0$:

\begin{itemize}
\item ground state solutions minimize $I_{\lambda}$ over the open set
\begin{equation}
\label{a+}\mathcal{A}_{+}:=\left\{  u\in X:\int_{\Omega}a(x)|u|^{q}>0\right\}
\end{equation}
(see Lemma \ref{l1});

\item there is a one-to-one correspondence between minimizers of $I_{\lambda}$
over $\mathcal{A}_{+}$ and nonnegative minimizers for $m_{+}=m_{+}(\lambda):=
\displaystyle \inf_{\mathcal{S}_{+}} E_{\lambda}$, where
\begin{equation}
E_{\lambda}(u):=\int_{\Omega}\left(  |\nabla u|^{p}-\lambda|u|^{p}\right)  ,
\label{de}%
\end{equation}
and
\[
\mathcal{S}_{+}=\mathcal{S}_{+}(a,q):=\left\{  u\in X:\int_{\Omega}a( x)
|u|^{q}=1\right\}
\]
(see Lemma \ref{l});

\item $m_{+}$ is achieved by \textit{exactly} one nonnegative minimizer (see
Proposition \ref{um}).

\item ground state solutions are the \textit{only} ones being positive in
$\Omega_{a}^{+}$ (see Proposition \ref{us}). This result is proved via a
generalized Picone's inequality (see Lemma \ref{lp}).
\end{itemize}

\subsection{A nonuniqueness result for $\lambda>0$}

We consider now $(P_{\lambda})$ with $\lambda>0$. Here the values
\[
\lambda_{1}:=\inf\left\{  \int_{\Omega}|\nabla u|^{p}:u\in X,\int_{\Omega
}|u|^{p}=1\right\}
\]
and
\[
\lambda^{\ast}=\lambda^{\ast}(a,q):=\inf\left\{  \int_{\Omega}|\nabla
u|^{p}:u\in X,\int_{\Omega}|u|^{p}=1,\int_{\Omega}a(x)|u|^{q}\geq0\right\}
\]
play important roles. It is readily seen that $\lambda_{1}$ and $\lambda^{*}$
are achieved. Moreover:

\begin{enumerate}
\item $\lambda_{1}$ is the first nonnegative eigenvalue of the problem
\[
-\Delta_{p}u=\lambda|u|^{p-2}u,\quad u\in X,
\]
and is achieved by a unique $\phi_{1}\in\mathcal{P}^{\circ}$ such that
$\int_{\Omega}\phi_{1}^{p}=1$.

\item $\lambda^{*}\geq\lambda_{1}$ with strict inequality if, and only if,
$\int_{\Omega}a\phi_{1}^{q}<0$.

\item If $X=W_{0}^{1,p}(\Omega)$ then $\lambda_{1}>0$. If $X=W^{1,p}(\Omega)$
then $\lambda_{1}=0$ and $\phi_{1}\equiv
|\Omega|^{-\frac{1}{p}}$. In this case the condition $\int_{\Omega}a\phi
_{1}^{q}<0$ reads $\int_{\Omega}a<0$.
\end{enumerate}

Let us give some insight on this case by briefly describing the geometry of
$I_{\lambda}$. Note that
\[
I_{\lambda}(u)=\frac{1}{p}E_{\lambda}(u)-\frac{1}{q}\int_{\Omega}a(x)|u|^{q},
\]
where $E_{\lambda}$ is given by \eqref{de}. Since $q<p$ and $E_{\lambda}$ is
coercive for $\lambda<\lambda_{1}$, one readily sees that $I_{\lambda}$ is
coercive, and consequently has a global minimizer $U_{+}$ for $\lambda
<\lambda_{1}$. One may easily show that $U_{+} \in\mathcal{A}_{+}$, i.e.
\[
I_{\lambda}(U_{+})=\min I_{\lambda}=\min_{\mathcal{A}_{+}}I_{\lambda}%
\quad\mbox{for}\quad\lambda<\lambda_{1}.
\]
More generally, any nontrivial solution of $(P_{\lambda})$ belongs to
$\mathcal{A}_{+}$ if $\lambda\leq\lambda_{1}$ (see Proposition \ref{puu}).

When $\lambda>\lambda_{1}$ the functional $I_{\lambda}$ becomes unbounded from
below, since $E_{\lambda}(t\phi_{1})\rightarrow-\infty$ as $t\rightarrow
\infty$. However, $E_{\lambda}$ is coercive in $\mathcal{A}_{+}$ for
$\lambda<\lambda^{\ast}$, so that $\min_{\mathcal{A} _{+}}I_{\lambda}$ is
achieved by some nonnegative minimizer, still denoted by $U_{+}$, for
$\lambda<\lambda^{\ast}$ (see Lemma \ref{l1}). Since $\mathcal{A}_{+}$ is
open, $U_{+}$ is a local minimizer of $I_{\lambda}$, and therefore a solution
of $(P_{\lambda})$. Recall that if $\int_{\Omega}a\phi_{1}<0$ then
$\lambda^{\ast}>\lambda_{1}$, so that in this case $\min_{\mathcal{A}_{+}%
}I_{\lambda}$ persists for $\lambda_{1}<\lambda<\lambda^{\ast}$. As already
observed, one may also obtain $U_{+}$ by minimizing $E_{\lambda}$ over the
$C^{1}$ manifold $\mathcal{S}_{+}$, up to some rescaling. When $\int_{\Omega
}a\phi_{1}^{q}<0$ and $\lambda_{1}<\lambda<\lambda^{\ast}$, a similar
procedure shall provide us with a second solution $U_{-}$ , since in this
case, setting
\[
\mathcal{S}_{-}=\mathcal{S}_{-}(a,q):=\left\{  u\in X:\int_{\Omega}a(
x)|u|^{q}=-1\right\}  ,
\]
one can show that $m_{-}=m_{-}(\lambda):= \displaystyle \inf_{\mathcal{S}_{-}}
E_{\lambda} $ is achieved and negative. The solution $U_{-}$ clearly belongs
to the negative counterpart of $\mathcal{A}_{+}$, namely
\[
\mathcal{A}_{-}:=\left\{  u\in X:\int_{\Omega}a\left(  x\right)
|u|^{q}<0\right\}  .
\]
More precisely, it belongs to
\[
E_{\lambda}^{-}:=\left\{  u\in X:E_{\lambda}(u)<0\right\}  ,
\]
which is a subset of $\mathcal{A}_{-}$ for $\lambda_{1}<\lambda<\lambda^{\ast
}$.

We shall see that $U_{-}>0$ in $\Omega_{a}^{+}$, and therefore uniqueness of
solutions positive in $\Omega_{a}^{+}$ \textit{breaks down} in this case. Let
us set%
\[
\Omega_{a}^{-}:=\{x\in\Omega:a(x)<0\}.
\]
The above discussion leads to the following result:

\begin{theorem}
\label{t2} Assume that $0<\lambda<\lambda^{\ast}$. Then:

\begin{enumerate}
\item $(P_{\lambda})$ has a ground state solution $U_{+}=U_{+}(\lambda)$.
Moreover, $U_{+}>0$ in $\Omega_{a}^{+}$ and $I_{\lambda}(U_{+}%
)=\displaystyle \min_{\mathcal{A}_{+}}I_{\lambda}$. In particular, $U_{+}$ is
a local minimizer of $I_{\lambda}$.

\item If $\int_{\Omega}a\phi_{1}^{q}<0$ and $\lambda>\lambda_{1}$ then
$(P_{\lambda})$ has a second solution $U_{-}=U_{-}(\lambda)$ satisfying
$U_{-}>0$ in $\Omega_{a}^{+}$. Moreover $U_{-}\not \equiv 0$ in $\Omega
_{a}^{-}$, and
\begin{equation}
\label{mm}I_{\lambda}(U_{-})=\min_{u\in E_{\lambda}^{-}}\max_{t>0}I_{\lambda
}(tu)=\displaystyle\inf_{u\neq0}\sup_{t>0}I_{\lambda}(tu).
\end{equation}

\end{enumerate}
\end{theorem}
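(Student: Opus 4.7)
The plan is to construct $U_+$ and $U_-$ as constrained minimizers for the variational problems already set up in the paper, and then derive the asserted positivity and the ground state / min-max characterizations by a combination of scaling (fibering) analysis, a bump perturbation argument, and V\'azquez's strong maximum principle. The only place where the hypothesis $\lambda<\lambda^\ast$ enters non-trivially is through the key estimate
\[
\int_\Omega a|u|^q\geq 0\implies E_\lambda(u)\geq(\lambda^\ast-\lambda)\int_\Omega|u|^p,
\]
which is immediate from the definition of $\lambda^\ast$ by $p$-homogeneity.

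For (1), Lemma \ref{l1} produces a nonnegative minimizer $U_+$ of $I_\lambda$ on $\mathcal{A}_+$, and openness of $\mathcal{A}_+$ makes $U_+$ a local minimizer, hence a solution. To see $U_+$ is a ground state I would observe that $I_\lambda(tU_+)<0$ for small $t>0$, while any nontrivial solution $v\notin\mathcal{A}_+$ satisfies $E_\lambda(v)=\int_\Omega a|v|^q\leq 0$ after testing the equation against $v$, whence
\[
I_\lambda(v)=\tfrac{q-p}{pq}\int_\Omega a|v|^q\geq 0>I_\lambda(U_+).
\]
Positivity of $U_+$ in $\Omega_a^+$ follows from a bump argument: if $U_+\equiv 0$ on some connected component $\Omega'$ of $\Omega_a^+$ and $0\leq\psi\in C_c^\infty(\Omega')$, then disjoint-support algebra gives
\[
I_\lambda(U_++\epsilon\psi)-I_\lambda(U_+)=\tfrac{\epsilon^p}{p}E_\lambda(\psi)-\tfrac{\epsilon^q}{q}\int_{\Omega'}a\psi^q,
\]
which is negative for small $\epsilon>0$ because $q<p$ and $\int_{\Omega'}a\psi^q>0$, contradicting minimality; V\'azquez's strong maximum principle then upgrades non-vanishing to strict positivity in each component.

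For (2), I would first verify $m_-<0$ by rescaling $\phi_1$ into $\mathcal{S}_-$ (this uses $\int_\Omega a\phi_1^q<0$), which gives $E_\lambda(t\phi_1)=t^p(\lambda_1-\lambda)<0$. To attain $m_-$, I would show that any minimizing sequence $(u_n)\subset\mathcal{S}_-$ is bounded in $X$: if not, normalizing $v_n=u_n/\|u_n\|$ forces $\int_\Omega a|v_n|^q\to 0$ and $E_\lambda(v_n)\to 0$, and the key estimate rules out a non-zero weak limit, while the limit $v=0$ combined with Rellich--Kondrachov contradicts $\|v_n\|=1$. Weak compactness then produces a nonnegative minimizer $v$, and Lagrange multipliers give $-\Delta_p v-\lambda v^{p-1}=\mu a v^{q-1}$ with $\mu=-m_->0$; the rescaling $U_-:=\mu^{-1/(p-q)}v$ solves $(P_\lambda)$. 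Since $\int_\Omega aU_-^q<0$ and $U_-\geq 0$, $U_-\not\equiv 0$ in $\Omega_a^-$. Positivity of $U_-$ on $\Omega_a^+$ again follows from a bump argument: if $v\equiv 0$ on a component $\Omega'$ of $\Omega_a^+$ and $0\leq\psi\in C_c^\infty(\Omega')$, then renormalizing $v+\epsilon\psi$ back to $\mathcal{S}_-$ produces an admissible competitor whose $E_\lambda$-value expands as $m_-+m_-(p/q)c\,\epsilon^q+o(\epsilon^q)$ with $c=\int_{\Omega'}a\psi^q>0$, strictly less than $m_-$, contradicting minimality.

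For the double min-max in \eqref{mm}, which I expect to be the hardest step, I would analyze the fibering map $\varphi_u(t):=I_\lambda(tu)=\frac{t^p}{p}E_\lambda(u)-\frac{t^q}{q}\int_\Omega a|u|^q$ for $t>0$. When $u\in E_\lambda^-$---which is contained in $\mathcal{A}_-$ by the key estimate---$\varphi_u$ has a unique positive maximizer $t_\ast(u)=(\int_\Omega a|u|^q/E_\lambda(u))^{1/(p-q)}$, and an explicit computation plus scale invariance of the resulting expression reduces $\min_{u\in E_\lambda^-}\max_{t>0}\varphi_u(t)$ to $\min_{\mathcal{S}_-}E_\lambda=m_-$, matching $I_\lambda(U_-)$ and yielding the first equality. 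For the second equality I would go through the cases of the sign of $E_\lambda(u)$ crossed with the sign of $\int_\Omega a|u|^q$: the key estimate excludes the potentially damaging scenarios where $\int_\Omega a|u|^q\geq 0$ occurs together with $E_\lambda(u)\leq 0$, so on every remaining ray either $\sup_{t>0}\varphi_u(t)=+\infty$ or $u\in E_\lambda^-$ and the first equality applies. Threading this case analysis cleanly, and in particular ruling out degenerate rays where $\sup_{t>0}\varphi_u(t)=0$ might drop below $I_\lambda(U_-)$, is the delicate point of the argument.
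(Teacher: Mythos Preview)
Your proposal is correct and matches the paper's approach (assembled from Lemmas~\ref{l1}, \ref{l}, and \ref{mp}), with the minor cosmetic difference that for $U_+>0$ in $\Omega_a^+$ you perturb $I_\lambda$ directly at $U_+$, whereas the paper passes to the normalized minimizer $V_+\in\mathcal{S}_+$ and perturbs the Rayleigh-type quotient $E_\lambda(\cdot)/(\int_\Omega a|\cdot|^q)^{p/q}$. Your worry about ``degenerate rays'' in \eqref{mm} is unfounded: your key estimate forces $E_\lambda(u)>0$ whenever $u\neq 0$ and $\int_\Omega a|u|^q\geq 0$, so any ray with $E_\lambda(u)\geq 0$ and $u\not\equiv 0$ has $\sup_{t>0}\varphi_u(t)=+\infty$ (if $E_\lambda(u)=0$ then necessarily $\int_\Omega a|u|^q<0$, and $\varphi_u(t)\to+\infty$), leaving only $E_\lambda^-$, where the first equality applies and gives $I_\lambda(U_-)=(\tfrac{1}{q}-\tfrac{1}{p})(-m_-)^{-q/(p-q)}>0$.
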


\begin{remark}
\strut

\begin{enumerate}
\item The first equality in \eqref{mm} shows that $U_{-}$ can be seen as a
ground state solution over $\mathcal{A}_{-}$, since it has minimal energy
among solutions in this set (see Lemma \ref{mp}).

\item Theorem \ref{t2} is consistent with \cite[Theorem 1.1]{alama} and
\cite{B}, which deal with the Neumann and Dirichlet cases of $(P_{\lambda})$
when $p=2$, respectively. Let us mention that a multiplicity phenomenon
in $(\lambda_{1},\lambda_{1}+\delta)$,
for some $\delta>0$, also
occurs in the superhomogeneous and subcritical case $p<q\leq p^{\ast}$  under the condition $\int_{\Omega}a\phi_{1}^{q}<0$, cf.
\cite{AT, BCN,Ou} for $p=2$, and \cite{BD,I} for $p>1$. Here $p^{\ast}$
denotes the Sobolev critical exponent.

\item One may ask how large is $\lambda^{\ast}$ to estimate the domain of
existence for $U_{+}$ and $U_{-}$. In this regard, it is not difficult to see
that $\lambda^{\ast}\rightarrow\infty$ as the support of $a^{-}$ increases
towards $\Omega$ (see Remark \ref{lin}).
\end{enumerate}
\end{remark}

\subsection{Positivity vs dead core formation}

Whenever they exist, $U_{\pm}$ eventually become positive as $q$ approaches
$p$ (keeping $a$ and $\lambda$ fixed). This result is proved via a continuity
property (already used in \cite{KRQUpp}) showing that, up to some
normalization, $U_{\pm}$ converge to a nonnegative solution of $(P_{\lambda})$
with $q=p$. For this purpose we show that $\lambda^{\ast}(q)$ converges to
$\lambda^{\ast}(p)$ as $q\rightarrow p^{-}$ (see Proposition \ref{le}).

\begin{theorem}
\label{t4} Let $a$ and $\lambda<\lambda^{\ast}(a,p)$ be fixed. There exists
$q_{0} =q_{0}(\lambda,a)\in\lbrack1,p)$ such that $U_{+}\in\mathcal{P}^{\circ
}$ for $q_{0} <q<p$. If $\int_{\Omega}a\phi_{1}^{p}<0$ and $\lambda
_{1}<\lambda<\lambda^{\ast}(p)$ then the same conclusion holds for $U_{-}$. In
particular, in this case $(P_{\lambda})$ has two solutions in $\mathcal{P}%
^{\circ}$ for $q_{0}<q<p$.
\end{theorem}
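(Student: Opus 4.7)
The plan is to argue by a continuity argument as $q\to p^{-}$, using that the limit problem at $q=p$ is the weighted linear eigenvalue problem for $-\Delta_{p}-\lambda(\cdot)^{p-1}$ with weight $a$, whose principal eigenfunctions lie in $\mathcal{P}^{\circ}$ by the strong maximum principle and Hopf's lemma \cite{Va}. Fix $\lambda<\lambda^{\ast}(a,p)$. By Proposition \ref{le}, $\lambda^{\ast}(a,q)\to\lambda^{\ast}(a,p)$ as $q\to p^{-}$, so $\lambda<\lambda^{\ast}(a,q)$ and $U_{+}(q)$ exists for $q$ close enough to $p$ by Theorem \ref{t2}(1). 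Under the extra hypothesis $\int_{\Omega}a\phi_{1}^{p}<0$ with $\lambda_{1}<\lambda$, continuity of $q\mapsto\int_{\Omega}a\phi_{1}^{q}$ (with $\phi_{1}$ fixed in $q$) keeps $\int_{\Omega}a\phi_{1}^{q}<0$, so Theorem \ref{t2}(2) also produces $U_{-}(q)$ for $q$ near $p$.

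I would work with the minimizer $w_{q}^{+}$ of $E_{\lambda}$ on $\mathcal{S}_{+}$, related to $U_{+}(q)$ by $U_{+}(q)=m_{+}(q)^{1/(q-p)}\,w_{q}^{+}$ with $m_{+}(q)>0$. Since the rescaling factor is positive it is enough to prove $w_{q}^{+}\in\mathcal{P}^{\circ}$ for $q$ near $p$. First, derive a uniform $X$-bound on $w_{q}^{+}$ from the coercivity estimate
\[
E_{\lambda}(u)\geq\frac{\lambda^{\ast}(a,q)-\lambda}{\lambda^{\ast}(a,q)}\int_{\Omega}|\nabla u|^{p},
\]
valid on $\overline{\mathcal{A}_{+}}$ and uniform for $q$ close to $p$, together with an upper bound $m_{+}(q)\leq C$ obtained from rescaled fixed test functions. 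Second, along any $q_{n}\to p^{-}$, extract a subsequence with $w_{q_{n}}^{+}\rightharpoonup w$ weakly in $X$ and strongly in $L^{r}$ for $r<p^{\ast}$; show $m_{+}(q_{n})\to m_{+}(p)$ and, using $q_{n}\to p$ together with strong $L^{r}$ convergence, identify $w$ as a minimizer of $E_{\lambda}$ on $\{u\in X:\int_{\Omega}a|u|^{p}=1\}$, hence as a principal eigenfunction of
\[
-\Delta_{p}u-\lambda u^{p-1}=m_{+}(p)\,a(x)\,u^{p-1}.
\]
Then $w\in\mathcal{P}^{\circ}$ by strong maximum principle and Hopf's lemma. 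Third, use standard $C^{1,\alpha}$ estimates \cite{db,L} to upgrade the convergence to $C^{1,\alpha}(\overline{\Omega})$; openness of $\mathcal{P}^{\circ}$ in the ambient $C^{1}$ space yields $w_{q_{n}}^{+}\in\mathcal{P}^{\circ}$ eventually, and therefore $U_{+}(q_{n})\in\mathcal{P}^{\circ}$.

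The argument for $U_{-}$ is parallel, replacing $\mathcal{S}_{+}$ by $\mathcal{S}_{-}$ and the rescaling by $U_{-}(q)=(-m_{-}(q))^{1/(q-p)}w_{q}^{-}$ with $m_{-}(q)<0$; the limit $w$ becomes a principal eigenfunction of the same operator associated with the principal negative eigenvalue, again in $\mathcal{P}^{\circ}$. The main obstacle here is the uniform $X$-bound on $w_{q}^{-}$: since $\lambda>\lambda_{1}$, $E_{\lambda}$ is no longer coercive on $X$, so the a priori bound must instead come from analyzing the scale-invariant ratio $v\mapsto E_{\lambda}(v)/\bigl(-\int_{\Omega}a|v|^{q}\bigr)^{p/q}$ over $\mathcal{A}_{-}$ and showing its infimum is uniformly bounded below as $q\to p^{-}$. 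A second delicate point common to both cases is verifying that the principal weighted eigenvalues of $-\Delta_{p}-\lambda(\cdot)^{p-1}$ with weight $a$ are simple with sign-definite eigenfunctions under the standing hypothesis $\lambda<\lambda^{\ast}(a,p)$, so that the identification of the limit $w$ is unambiguous.
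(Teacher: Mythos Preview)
Your approach is essentially the same as the paper's, which proves Theorem~\ref{t4} via Proposition~\ref{cs}: work with the normalized minimizers $V_{\pm}(q)\in\mathcal{S}_{\pm}$, obtain uniform $X$-bounds, pass to the limit in the Euler--Lagrange equation, and conclude by the strong maximum principle and $C^{1}$-convergence. However, the two ``obstacles'' you single out are not genuine difficulties, and the paper sidesteps both cleanly.

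First, the uniform bound on $w_{q}^{-}$ does not require analyzing the scale-invariant ratio. Since $w_{q}^{-}\in\mathcal{S}_{-}$ means $\int_{\Omega}a|w_{q}^{-}|^{q}=-1$ (bounded) and $E_{\lambda}(w_{q}^{-})=m_{-}(q)<0$ (so $\limsup\leq 0<\infty$), Lemma~\ref{la}(2) applies directly and gives the $X$-bound. No coercivity of $E_{\lambda}$ on $X$ is needed.

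Second, you do not need simplicity of the limiting eigenvalue nor an unambiguous identification of the limit $w$. The paper argues by contradiction: along a sequence $q_{n}\to p^{-}$ with $v_{n}\not\in\mathcal{P}^{\circ}$, one uses the $(S_{+})$ property of $-\Delta_{p}$ (testing the equation with $v_{n}-v_{0}$) to get strong $X$-convergence; then $v_{0}\geq 0$, $v_{0}\not\equiv 0$ (since $\int_{\Omega}a|v_{0}|^{p}=\pm 1$), and $v_{0}$ solves $-\Delta_{p}v_{0}=\lambda v_{0}^{p-1}+m_{0}\,a\,v_{0}^{p-1}$. The strong maximum principle and Hopf's lemma apply to \emph{any} nontrivial nonnegative solution of this $p$-homogeneous equation, forcing $v_{0}\in\mathcal{P}^{\circ}$ regardless of which eigenfunction it is. Elliptic regularity then gives $v_{n}\to v_{0}$ in $C^{1}(\overline{\Omega})$, contradicting $v_{n}\not\in\mathcal{P}^{\circ}$. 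So the simplicity issue never arises.
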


\begin{remark}
The positivity of $U_{+}$ (respect. $U_{-}$) can also be deduced for $\lambda$
close and smaller than $\lambda_{1}$ (respect. larger than $\lambda_{1}$) if
$\int_{\Omega}a\phi_{1}^{p}>0$ (respect. $<0$), see Theorem \ref{t3}, and also
if $a^{-}$ is small enough, cf. Proposition \ref{pa}. To the best of our
knowledge, the existence of positive solutions provided by these results is
new for $p\neq2$ and $\lambda\neq0$.
\end{remark}

On the other hand, solutions of $(P_{0})$ develop a dead core as $a^{-}$
increases, cf. \cite[Proposition 1.11]{D} (see also \cite[Theorem E]{KRQUpp}
and \cite[Theorem 3.2]{ans} for a simpler statement with $p=2$). We show that
the dead core formation also occurs for other values of $\lambda$ and that in
certain situations the dead core approximates to $\Omega_{a}^{-}$ when $a^{-}$
tends to $\infty$.

\begin{theorem}
\label{dc} Assume that $a\in C(\overline{\Omega})$ changes sign,
$\Omega^{\prime}\Subset\Omega_{a}^{-}$, and $a_{n}:=a^{+}-na^{-}$. If either
$\lambda\leq0$ or $p=2$ and $\lambda\leq\lambda_{1}$, then there exists
$n_{0}>0$ such that any solution of $\left(  P_{\lambda,a_{n}}\right)  $
vanishes in $\Omega^{\prime}$ for all $n\geq n_{0}$.
\end{theorem}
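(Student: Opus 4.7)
The plan is a local comparison argument using an explicit radial dead-core barrier for the $p$-Laplacian with sublinear absorption, in the spirit of D\'{\i}az's classical result \cite[Proposition 1.11]{D}, adapted so that the barrier depends on $n$ and eventually covers any prescribed $\Omega'\Subset\Omega_a^-$. The first step is a uniform $L^\infty$ bound: there exists $M$, independent of $n$, such that every solution $u_n$ of $(P_{\lambda,a_n})$ satisfies $\|u_n\|_\infty\leq M$. For $\lambda\leq 0$, testing the equation with $u_n$ yields
$\int_\Omega|\nabla u_n|^p+(-\lambda)\int_\Omega u_n^p+n\int_{\Omega_a^-}a^-u_n^q=\int_{\Omega_a^+}a^+u_n^q$,
and since $q<p$ the Sobolev embedding gives $\|u_n\|_X\leq C$; a Moser iteration applied to the subsolution inequality $-\Delta_p u_n\leq a^+(x)u_n^{q-1}$, obtained by discarding the nonpositive terms, then produces the $L^\infty$ bound. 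The case $p=2$, $0<\lambda\leq\lambda_1$ is analogous, using $\int|\nabla u_n|^2-\lambda\int u_n^2\geq 0$.

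Next, since $\overline{\Omega'}\Subset\Omega_a^-$, one selects $R,\delta>0$ so that for every $x_0\in\overline{\Omega'}$ the ball $B_R(x_0)$ lies in $\Omega_a^-$ and $a^-\geq\delta$ on $B_R(x_0)$. When $\lambda\leq 0$, dropping the nonpositive term $\lambda u_n^{p-1}$ shows that $u_n$ is a subsolution of $-\Delta_p v+n\delta v^{q-1}=0$ on $B_R(x_0)$. I would seek a radial supersolution of the form $w_n(r)=A_n(r-r_0(n))_+^{p/(p-q)}$; the exponent $p/(p-q)$ is forced by the requirement that $\Delta_p w_n$ and $w_n^{q-1}$ scale identically in $r-r_0(n)$, and a direct computation produces constants $A_n\sim n^{1/(p-q)}$ and $r_0(n)$ for which $-\Delta_p w_n+n\delta w_n^{q-1}\geq 0$ holds in the annulus $r_0(n)<r<R$, with $w_n(R)=M$. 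Matching these conditions gives $R-r_0(n)\sim (M/A_n)^{(p-q)/p}\to 0$ as $n\to\infty$, so $r_0(n)\nearrow R$. The weak comparison principle for the operator $-\Delta_p+n\delta(\cdot)^{q-1}$, obtained by testing with $(u_n-w_n)^+\in W_0^{1,p}(B_R(x_0))$ and invoking the monotonicity of both $\xi\mapsto|\xi|^{p-2}\xi$ and $t\mapsto t^{q-1}$, then forces $u_n\leq w_n$ on $B_R(x_0)$. Hence $u_n\equiv 0$ on $B_{r_0(n)}(x_0)$, and in particular $u_n(x_0)=0$ as soon as $n$ is large enough that $r_0(n)>0$. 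This threshold $n_0$ is uniform in $x_0\in\overline{\Omega'}$, giving $u_n\equiv 0$ on $\Omega'$ for all $n\geq n_0$.

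The main subtlety, and the step I expect to be the hardest, is the case $p=2$ with $0<\lambda\leq\lambda_1$, where the linear term $\lambda u$ can no longer be absorbed by the sign analysis. The idea is to exploit that $\lambda_1(B_R)\to\infty$ as $R\to 0$, so for $R$ sufficiently small the operator $-\Delta-\lambda$ satisfies the maximum principle on every $B_R(x_0)$ with $x_0\in\overline{\Omega'}$. A modified barrier---for instance, multiplying the previous $w_n$ by the positive first Dirichlet eigenfunction of $-\Delta-\lambda$ on $B_R$, or perturbing the radial ODE so that the linear term $-\lambda v$ is treated as a lower-order correction absorbed into the leading balance as $n\to\infty$---still produces a dead-core supersolution for $-\Delta v-\lambda v+n\delta v^{q-1}=0$ on $B_R(x_0)$, and weak comparison concludes the proof exactly as before.
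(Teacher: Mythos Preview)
Your proposal is correct and follows the same local barrier--comparison strategy as the paper. The differences are technical rather than conceptual. The paper's barrier is $w(x)=k\bigl(|x-x_0|^2-t^2\bigr)_+^{\,p/(p-q)}$, with the inner radius $t$ \emph{fixed} (chosen from $\operatorname{dist}(\overline{\Omega'},\partial\Omega_a^-)$) and the coefficient $k\sim n^{1/(p-q)}$ growing so that $w$ eventually dominates the uniform $L^\infty$ bound on $\partial B_R$; your barrier $A_n(r-r_0(n))_+^{\,p/(p-q)}$ has the same free-boundary exponent but you let $r_0(n)\nearrow R$ instead. Both work, and the paper's quadratic form $(r^2-t^2)^\beta$ merely makes the $\Delta_p$ computation tidier, especially in case (2) where the $\lambda w$ term is retained.

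Two small cautions. First, in your case (2) sketch, the suggestion of multiplying the barrier by the first Dirichlet eigenfunction of $-\Delta-\lambda$ on $B_R$ does not work as stated, since that eigenfunction vanishes on $\partial B_R$ while the barrier must dominate $u_n$ there; your second idea---absorbing $\lambda v$ as a lower-order term controlled by $n\delta v^{q-1}$ once $n$ is large---is the correct one and is precisely what the paper does explicitly. Second, your uniform $L^\infty$ bound at the endpoint $\lambda=\lambda_1$ (Dirichlet, $p=2$) needs more than ``$E_{\lambda_1}\geq 0$'', which is not coercive; one fixes this by observing that $u_n\in\mathcal{A}_+(a_n)\subset\mathcal{A}_+(a_{n_1})$ for some fixed $n_1$ with $\lambda_1<\lambda^*(a_{n_1})$, so the coercivity of $E_\lambda$ on $\overline{\mathcal{A}_+(a_{n_1})}$ (Lemma~\ref{la}(3)) gives a bound uniform in $n$.
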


Theorem \ref{dc} is proved by comparing solutions of $(P_{\lambda})$ with a
local barrier function obtained by a perturbation of $|x-x_{0}|^{\frac
{2p}{p-q}}$ with $x_{0}\in\Omega^{\prime}$. Note that this power is not the
usual one (see e.g. \cite[Proposition 1.11]{D}), namely, $|x-x_{0}|^{\frac
{p}{p-q}}$. This comparison is done using the weak comparison principle for
the operator $u\mapsto-\Delta_{p}u-\lambda u^{p-1}$ (defined on $W^{1,p}%
(\Omega)$), which breaks down when $\lambda>0$ and $p\not =2$ (see
\cite[Corollary 8]{garcia}). It is an interesting open question whether this
result still holds for $p\not =2$ and $\lambda>0$.

\begin{remark}
\label{rdc} Let either $\lambda\leq0$ or $p=2$ and $\lambda\leq\lambda_{1}$.
Using Theorem \ref{dc} one can find examples of solutions of $(P_{\lambda})$
vanishing in a prescribed number of connected components of $\Omega_{a}^{+}$.
This situation can be readily checked in the one-dimensional case: let
$\Omega:=(b,c)$ and $a\in C(\overline{\Omega})$ with $\Omega_{a}%
^{+}=(b,b+\frac{\delta}{2})\cup(c-\frac{\delta}{2},c)$, for some $\delta>0$
small. If $a$ is sufficiently negative in $(b+\frac{\delta}{2},c-\frac{\delta
}{2})$ then any solution of $(P_{\lambda})$ vanishes in $[b+\delta,c-\delta]$.
It follows that $U_{+}$ is a two-bumps solution, i.e. $U_{+}=U_{1}+U_{2}$,
with $U_{1},U_{2}\geq0$ in $\Omega$, $\text{supp }U_{1}\subset\lbrack
b,b+\delta]$, and $\text{supp }U_{2}\subset\lbrack c-\delta,c]$. Since $N=1$
(and thus $(P_{\lambda})$ holds pointwisely), we can easily check that $U_{1}$
and $U_{2}$ solve $(P_{\lambda})$. This procedure also applies if $\Omega
_{a}^{+}$ has finitely many connected components, to obtain a solution
vanishing in a prescribed number of these components, and also when $N>1$
provided that $p=2$ and $\lambda\leq\lambda_{1}$.
\end{remark}

\begin{figure}[ptbh]
\begin{center}
\includegraphics[scale=0.24]{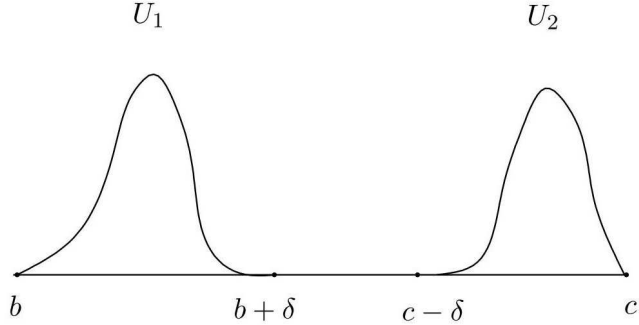}
\end{center}
\caption{A ground state solution having a dead core}%
\end{figure}

\subsection*{Final remarks}

One may check that besides Theorem \ref{t4}, the results above still hold if
$\Omega$ is a nonsmooth bounded domain. These results
also remain valid if $a\in L^{\infty}(\Omega)$,
if we set $\Omega_{a}^{\pm}$ as the largest open sets where
$a\gtrless0$ \textit{a.e.} (assuming in addition that $\mbox{essinf}_{\Omega
^{\prime}}a^{-}>0$ in Theorem \ref{dc}). Furthermore, the methods used in this
article allow us to derive similar results for the problem
\[
\left\{
\begin{array}
[c]{lll}%
-\Delta_{p}u=a(x)u^{q-1} & \mathrm{in} & \Omega,\\
u\geq0 & \mathrm{in} & \Omega,\\
|\nabla u|^{p-2}\partial_{\nu}u=\lambda u^{p-1} & \mathrm{on} & \partial
\Omega.
\end{array}
\right.
\]
We refer to \cite{KRQUR1,KRQUR2} for an investigation of this problem with
$p=2$.

\medskip

The outline of this article is the following: In Section 2 we show the
equivalence between ground state solutions and nonnegative minimizers of
$m_{+}$ and we prove Theorem \ref{t2}. Section 3 is devoted to the uniqueness
results in Theorem \ref{t1} and Corollary \ref{c1}. Section 4 is devoted to
positivity results, whereas in Section 5 we prove Theorem \ref{dc}.

\subsection*{Notation}

Throughout this paper, we use the following notation:

\begin{itemize}
\item $a^{\pm}:=\max(\pm a,0)$.

\item $\Omega_{f}^{+}:=\{x\in\Omega:f(x)>0\}$ and $\Omega_{f}^{-}:=\{x\in
\Omega:f(x)<0\}$, for $f\in C(\overline{\Omega})$.

\item Given $r>1$, we denote by $\Vert\cdot\Vert_{r}$ the usual norm in
$L^{r}(\Omega)$ and by $\Vert\cdot\Vert$ the usual norm in $X$, i.e. $\Vert
u\Vert=\Vert\nabla u\Vert_{p}$ if $X=W_{0}^{1,p}(\Omega)$ and $\left\Vert
u\right\Vert =\Vert\nabla u\Vert_{p}+\Vert u\Vert_{p}$ if $X=W^{1,p}(\Omega)$.

\item Strong and weak convergences are denoted by $\rightarrow$ and
$\rightharpoonup$, respectively.

\item Given $f\in L^{1}(\Omega)$, the integral $\int_{\Omega}f$ is considered
with respect to the Lebesgue measure. Equalities and inequalities involving
$f$ shall be understood holding \textit{a.e.}
\end{itemize}

\section{Ground states and constrained minimizers}

The next result will be used repeatedly throughout this paper:

\begin{lemma}
\label{la} \strut

\begin{enumerate}
\item The maps $u\mapsto\int_{\Omega}|u|^{p}$ and $u\mapsto\int_{\Omega
}a|u|^{q}$ are weakly continuous on $X$, i.e. if $u_{n}\rightharpoonup u$ in
$X$ then $\int_{\Omega}|u_{n}|^{p}\rightarrow\int_{\Omega}|u|^{p}$ and
$\int_{\Omega}a|u_{n}|^{q}\rightarrow\int_{\Omega}a|u|^{q}$.

\item Assume that $\lambda_{n}\rightarrow\lambda\leq\lambda^{\ast}$ with
$\int_{\Omega}a\phi_{1}^{q}>0$ if $\lambda=\lambda^{\ast}$. If $\{u_{n}%
\}\subset X$ is such that $\left\{  \int_{\Omega}a|u_{n}|^{q}\right\}  $ is
bounded and $\limsup E_{\lambda_{n}}(u_{n})<\infty$, then $\{u_{n}\}$ is
bounded in $X$.

\item For any $\lambda<\lambda^{\ast}$ there exists $C_{\lambda}=C_{\lambda
}(a)>0$ such that $E_{\lambda}(u)\geq C_{\lambda}\Vert u\Vert^{p}$ for every
$u\in\overline{\mathcal{A}_{+}}$ (the closure of $\mathcal{A}_{+}$ in $X$).

\item If $u$ and $v$ solve $(P_{\lambda})$ with $u\equiv tv$ for some $t>0$,
and $\int_{\Omega}au^{q}\not =0$, then $u\equiv v$.
\end{enumerate}
\end{lemma}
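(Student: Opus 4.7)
\medskip

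\textbf{Proof plan.}

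For part (1), my plan is to invoke the compact embeddings $X \hookrightarrow L^{p}(\Omega)$ and $X \hookrightarrow L^{q}(\Omega)$ (Rellich--Kondrachov, valid because $q<p<p^{\ast}$). If $u_n \rightharpoonup u$ in $X$, then $u_n \to u$ in $L^p(\Omega)$ and $L^q(\Omega)$, and since $a\in L^\infty(\Omega)$, the integrals $\int_\Omega |u_n|^p$ and $\int_\Omega a|u_n|^q$ converge to $\int_\Omega |u|^p$ and $\int_\Omega a|u|^q$, respectively.

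For part (2), I will argue by contradiction. Suppose $\|u_n\|\to\infty$ along a subsequence, and set $v_n:=u_n/\|u_n\|$. Then $v_n\rightharpoonup v$ in $X$ (up to a further subsequence), and $\int_\Omega a|v_n|^q = \|u_n\|^{-q}\int_\Omega a|u_n|^q \to 0$, so by part (1), $\int_\Omega a|v|^q=0$. Dividing the bound $\limsup E_{\lambda_n}(u_n)<\infty$ by $\|u_n\|^p$ yields $\limsup E_{\lambda_n}(v_n)\leq 0$, hence, by weak lower semicontinuity of $u\mapsto\int_\Omega|\nabla u|^p$ together with part (1), $E_\lambda(v)\leq 0$. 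Since $\int_\Omega a|v|^q\geq 0$, the definition of $\lambda^\ast$ gives $\int_\Omega|\nabla v|^p\geq \lambda^\ast\int_\Omega|v|^p$, so $E_{\lambda^\ast}(v)\geq 0$ and therefore $E_\lambda(v)=0$ with $(\lambda^\ast-\lambda)\int_\Omega|v|^p=0$. If $\lambda<\lambda^\ast$, then $v\equiv 0$, and tracking the norms (in both Dirichlet and Neumann cases) shows $\|\nabla v_n\|_p\to 0$ and $\|v_n\|_p\to 0$, contradicting $\|v_n\|=1$. The delicate case is $\lambda=\lambda^\ast$: here $v$ must minimize the first eigenvalue problem, so by simplicity of $\phi_1$, $v=c\phi_1$ with $c\neq 0$; then $\int_\Omega a|v|^q = |c|^q\int_\Omega a\phi_1^q>0$ by hypothesis, contradicting $\int_\Omega a|v|^q=0$. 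This boundary case is where the extra assumption is essential, and it will be the main obstacle to handle cleanly.

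For part (3), I will use a contradiction/homogeneity argument. Suppose there exists $\{u_n\}\subset\overline{\mathcal{A}_+}$ with $\|u_n\|=1$ and $E_\lambda(u_n)\to 0$. By part (1) and weak lower semicontinuity, a weak limit $u$ satisfies $\int_\Omega a|u|^q\geq 0$ and $E_\lambda(u)\leq 0$. Exactly as in part (2), the strict inequality $\lambda<\lambda^\ast$ combined with $E_{\lambda^\ast}(u)\geq 0$ forces $u\equiv 0$, whence $\int_\Omega|\nabla u_n|^p\to 0$ and $\int_\Omega|u_n|^p\to 0$, contradicting $\|u_n\|=1$. Hence $\inf\{E_\lambda(u):u\in\overline{\mathcal{A}_+},\ \|u\|=1\}=:C_\lambda>0$, and the desired inequality follows from the $p$-homogeneity of $E_\lambda$ and the fact that $\overline{\mathcal{A}_+}$ is invariant under positive scalar multiplication.

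For part (4), I will rely on a direct algebraic identity. Writing $u=tv$ with $t>0$ and plugging into the equation $-\Delta_p u=\lambda u^{p-1}+a\,u^{q-1}$, one obtains
\[
t^{p-1}(-\Delta_p v)=\lambda t^{p-1}v^{p-1}+a\,t^{q-1}v^{q-1}.
\]
Subtracting $t^{p-1}$ times the equation for $v$ yields $a(x)(t^{q-1}-t^{p-1})v^{q-1}=0$ pointwise. Since $q\neq p$, either $t=1$ and we conclude $u\equiv v$, or $a\,v^{q-1}\equiv 0$, which gives $\int_\Omega a u^q = t^q\int_\Omega a v^q=0$, contradicting the hypothesis. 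This part is essentially computational and should present no difficulty.
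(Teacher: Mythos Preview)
Your plan is correct and follows essentially the same strategy as the paper in all four parts: compact Sobolev embeddings for (1), a normalization/contradiction argument for (2) and (3), and a homogeneity computation for (4).

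Two small points are worth tightening. In part (2), when $\lambda=\lambda^{\ast}$ you assert directly that $v=c\phi_1$ with $c\neq 0$, but you have not yet excluded $v\equiv 0$ in that case; the norm-tracking argument you wrote for $\lambda<\lambda^{\ast}$ works verbatim here (it only uses $v_n\rightharpoonup 0$ and $\limsup E_{\lambda_n}(v_n)\le 0$), and indeed the paper disposes of $v_0\equiv 0$ uniformly \emph{before} splitting into the two cases. You should also make explicit that the hypothesis $\int_\Omega a\phi_1^q>0$ forces $\lambda^{\ast}=\lambda_1$, which is why $E_{\lambda^{\ast}}(v)=0$ identifies $v$ as a first eigenfunction. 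In part (4) the paper argues integrally (test $(P_\lambda)$ against the solution to get $E_\lambda(u)=\int_\Omega a u^q$, then compare the two scalings), whereas you subtract the equations pointwise; both are valid and equally short.
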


\begin{proof}
\strut
\begin{enumerate}
\item It follows from the compactness of the Sobolev embedding $X \subset
L^{t}(\Omega)$ for $t\in(1,p^{*})$.
\item If $\{u_{n}\}$ is unbounded in $X$ then we can assume that $\Vert
u_{n}\Vert\rightarrow\infty$, and $v_{n}:=\frac{u_{n}}{\Vert u_{n}\Vert
}\rightharpoonup v_{0}$ in $X$, for some $v_{0}$. It follows that
$\int_{\Omega}a|v_{0}|^{q}=\lim\int_{\Omega}a|v_{n}|^{q}=0$ and $E_{\lambda
}(v_{0})\leq\liminf E_{\lambda_{n}}(v_{n})\leq\limsup E_{\lambda_{n}}%
(v_{n})\leq0$. If $v_{0}\equiv0$ then we find that $E_{\lambda_{n}}%
(v_{n})\rightarrow0$, so that $v_{n}\rightarrow0$ in $X$, which is impossible.
Thus $v_{0}\not \equiv 0$. If $\lambda<\lambda^{\ast}$ then we find that
$E_{\lambda}(v_{0})>0$, a contradiction. Moreover, if $\int_{\Omega}a\phi
_{1}^{q}>0$ and $\lambda=\lambda^{\ast}=\lambda_{1}$, then $E_{\lambda_{1}%
}(v_{0})=0$, so that $v_{0}=c\phi_{1}$, for some $c\neq0$, which contradicts
$\int_{\Omega}a|v_{0}|^{q}=0$. Therefore $\{u_{n}\}$ is bounded in $X$.
\item First of all, note that $\overline{\mathcal{A}_{+}}=\{  u\in X:\int_{\Omega}a(x)|u|^{q} \geq 0\}$. Assume by contradiction that $\{u_{n}\}\subset \overline{\mathcal{A}_{+}}$ with
$E_{\lambda}(u_{n})\leq\frac{1}{n}\Vert u_{n}\Vert^{p}$. Then $v_{n}%
:=\frac{u_{n}}{\Vert u_{n}\Vert}$ satisfies $E_{\lambda}(v_{n})\leq\frac{1}%
{n}$ and $\int_{\Omega}a|v_{n}|^{q}\geq0$ for every $n$. We may assume, for some
$v_{0}\in X$, that $v_{n}\rightharpoonup v_{0}$ in $X$. Thus $E_{\lambda
}(v_{0})\leq0$ and $\int_{\Omega}a|v_{0}|^{q}\geq0$. Moreover, if $v_{0}%
\equiv0$ then $\limsup E_{\lambda}(v_{n})\leq E_{\lambda}(v_{0})$, so that
$v_{n}\rightarrow0$ in $X$, which is impossible. Hence $v_{0}\not \equiv 0$,
and from $E_{\lambda}(v_{0})\leq0$ we infer that $$\lambda\geq \frac{\int_\Omega |\nabla v_0|^p}{\int_\Omega v_0^p}\geq\lambda^{\ast},$$
a contradiction.
\item Since $u$ and $v$ solve $(P_\lambda)$, we have
$E_\lambda(u)=\int_\Omega au^q\neq 0$ and $E_\lambda(v)=\int_\Omega av^q$.
From $u \equiv tv$ we deduce, using the first equality, that
$t^{p-q}E_\lambda(v)=\int_\Omega av^q\neq 0$,
which clearly yields $t=1$.
\end{enumerate}
\end{proof}

\begin{remark}
Note that Lemma \ref{la}(3) says in particular that if $\lambda<\lambda^{\ast
}$ then $E_{\lambda}(u)>0$ for every $u\not \equiv 0$ such that $\int_{\Omega
}a|u|^{q}\geq0$. This result will be used repeatedly.
\end{remark}

Let us introduce
\[
M=M(\lambda):=\inf_{u\in\mathcal{A}_{+}}I_{\lambda}(u)
\]
and recall that%
\[
m_{\pm}=m_{\pm}(\lambda):=\inf_{v\in\mathcal{S}_{\pm}}E_{\lambda}(v).
\]

\begin{lemma}
\label{l1} Let $\lambda<\lambda^{*}$. Then:

\begin{enumerate}
\item There exists $U_{+}=U_{+}(\lambda)\in\mathcal{A}_{+}$ such that
$U_{+}\geq0$ and $\displaystyle I_{\lambda}(U_{+})=M<0$. Any such $U_{+}$ is a
ground state solution of $(P_{\lambda})$.

\item There exists $V_{\pm}=V_{\pm}(\lambda)\in\mathcal{S}_{\pm}$ such that
$V_{\pm}\geq0$ and $E_{\lambda}(V_{\pm})=m_{\pm}$. Moreover:

\begin{enumerate}
\item $m_{+}>0$ and $(m_{+})^{\frac{-1}{p-q}}V$ is a solution of $(P_{\lambda
})$ for any $V\geq0$ achieving $m_{+}$.

\item if $\int_{\Omega}a\phi_{1}^{q}<0$ and $\lambda>\lambda_{1}$ then
$m_{-}<0$ and $U_{-}:=(-m_{-})^{\frac{-1}{p-q}}V$ is a solution of
$(P_{\lambda})$ for any $V\geq0$ achieving $m_{-}$.


\end{enumerate}
\end{enumerate}
\end{lemma}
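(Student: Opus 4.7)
The plan is to apply the direct method of the calculus of variations to the three minimization problems for $M$, $m_+$, and $m_-$, using Lemma \ref{la} for coercivity, and then convert each constrained minimizer into a solution of $(P_\lambda)$ via the Lagrange multiplier rule and a homogeneous rescaling.

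\textbf{Part (1).} I first show that $I_\lambda$ is coercive and bounded below on $\mathcal{A}_+$. Writing $I_\lambda(u) = \tfrac{1}{p}E_\lambda(u) - \tfrac{1}{q}\int_\Omega a|u|^q$ and invoking Lemma \ref{la}(3) on $\overline{\mathcal{A}_+}$ gives $I_\lambda(u) \geq \tfrac{C_\lambda}{p}\Vert u\Vert^p - C\Vert u\Vert^q$, so $M$ is finite. Testing with a smooth nonnegative bump $\phi$ supported in $\Omega_a^+$ yields $I_\lambda(t\phi) < 0$ for small $t > 0$ (the $t^q$ term dominates), whence $M < 0$. A minimizing sequence $u_n \rightharpoonup U_+$ in $X$ passes to the limit using Lemma \ref{la}(1) for the $|u|^p$ and $a|u|^q$ terms together with weak lower semicontinuity of the gradient norm, giving $I_\lambda(U_+) \leq M$; replacing $U_+$ by $|U_+|$ one may assume $U_+ \geq 0$. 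The decisive step is that $U_+$ cannot slip onto $\partial\mathcal{A}_+ = \{u : \int_\Omega a|u|^q = 0\}$, since on that set Lemma \ref{la}(3) forces $I_\lambda(u) = \tfrac{1}{p}E_\lambda(u) \geq 0$, contradicting $M < 0$. Hence $U_+ \in \mathcal{A}_+$ and, by openness, is an unconstrained local minimizer of $I_\lambda$, i.e.\ a nonnegative solution of $(P_\lambda)$. To verify $U_+$ is a ground state, I test $(P_\lambda)$ against any nontrivial solution $v$ to get $I_\lambda(v) = (\tfrac{1}{p} - \tfrac{1}{q})\int_\Omega a v^q$, and perform a sign-case analysis on $\int_\Omega a v^q$: the positive case gives $v \in \mathcal{A}_+$ and $I_\lambda(v) \geq M$; the negative case gives $I_\lambda(v) > 0 > M$; and the vanishing case is excluded by Lemma \ref{la}(3), which forces $v \equiv 0$.

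\textbf{Part (2).} On $\mathcal{S}_+ \subset \mathcal{A}_+$, Lemma \ref{la}(3) gives $E_\lambda \geq C_\lambda\Vert\cdot\Vert^p > 0$, so $m_+ > 0$ and minimizing sequences are bounded. On $\mathcal{S}_-$ I first note that, when $\lambda > \lambda_1$ and $\int_\Omega a\phi_1^q < 0$, one can rescale $\phi_1$ to an element $t\phi_1 \in \mathcal{S}_-$, yielding $E_\lambda(t\phi_1) = t^p(\lambda_1 - \lambda) < 0$, hence $m_- < 0$; boundedness of minimizing sequences then follows from Lemma \ref{la}(2). In both cases, Lemma \ref{la}(1) keeps weak limits inside $\mathcal{S}_\pm$, and weak lsc of $E_\lambda$ delivers minimizers $V_\pm$, which can be taken nonnegative by replacement with $|V_\pm|$. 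The Lagrange multiplier rule on the $C^1$ manifold $\mathcal{S}_\pm$ (non-degenerate because $G'(V)\cdot V = q\int_\Omega a V^q = \pm q \neq 0$, where $G(u) := \int_\Omega a|u|^q$) gives $-\Delta_p V - \lambda V^{p-1} = \tfrac{\mu q}{p}\, a V^{q-1}$ weakly; testing with $V$ identifies $\mu = \pm p m_\pm/q$, and the homogeneous scaling $U := c V$ with $c^{p-q}$ equal to $1/m_+$ or $-1/m_-$ (both positive, so $c$ is well-defined) absorbs the coefficient and turns the Euler equation into $(P_\lambda)$.

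The main obstacle, in my view, is preventing the minimizer for $M$ from being trapped on $\partial\mathcal{A}_+$; this requires the interplay between the strict inequality $M < 0$ (produced by the test-function computation) and the uniform coercivity $E_\lambda \geq C_\lambda\Vert\cdot\Vert^p$ on $\overline{\mathcal{A}_+}$ from Lemma \ref{la}(3). Everything else is a fairly standard direct-method plus Lagrange-multiplier routine.
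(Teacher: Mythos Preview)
Your proof is correct and follows essentially the same approach as the paper's own argument: the direct method with coercivity supplied by Lemma~\ref{la}(3), the key observation that a weak limit cannot land on $\partial\mathcal{A}_+$ because $M<0$ while $I_\lambda\geq 0$ there, and the Lagrange multiplier plus homogeneous rescaling for $m_\pm$. The only cosmetic differences are that the paper handles the ground-state verification in a single line (any solution $u\notin\mathcal{A}_+$ has $I_\lambda(u)=(\tfrac{1}{p}-\tfrac{1}{q})\int_\Omega au^q\geq 0$, covering your negative and vanishing cases at once), and establishes $m_+>0$ only after producing the minimizer $V_+\not\equiv 0$ rather than via a uniform lower bound on $\Vert\cdot\Vert$ over $\mathcal{S}_+$.
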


\begin{proof}
\strut
\begin{enumerate}
\item First of all, taking any $u\in\mathcal{A}_{+}$ we see that $I_{\lambda
}(tu)<0$ for $t>0$ small enough. Thus $M<0$. By Lemma \ref{la}(3), we know
that
\begin{equation}
I_{\lambda}(u)\geq C_{\lambda}\Vert u\Vert^{p}-C\Vert u\Vert^{q} \label{e1}%
\end{equation}
for some $C,C_{\lambda}>0$ and every $u\in\mathcal{A}_{+}$. Thus $I_{\lambda}$
is bounded from below in $\mathcal{A}_{+}$. Let us take $\{u_{n}%
\}\subset\mathcal{A}_{+}$ such that $I_{\lambda}(u_{n})\rightarrow M$. From
\eqref{e1} we see that $\{u_{n}\}$ is bounded, so we may assume that
$u_{n}\rightharpoonup u_{0}$ in $X$, so that $I_{\lambda}(u_{0})\leq M<0$,
since $I_{\lambda}$ is weakly lower semicontinuous. Moreover, $\int_{\Omega
}a|u_{0}|^{q}\geq0$ and $u_{0}\not \equiv 0$. Since $\lambda<\lambda^{\ast
}(q)$, we find that $E_{\lambda}(u_{0})\geq0$. If $\int_{\Omega}a|u_{0}%
|^{q}=0$ then we get $I_{\lambda}(u_{0})=E_{\lambda}(u_{0})\geq0$, a
contradiction. Therefore $u_{0}\in\mathcal{A}_{+}$ and $I_{\lambda}(u_{0})=M$.
Since $\mathcal{A}_{+}$ is open, we see that $u_{0}$ is a local minimizer, and
consequently a critical point of $I_{\lambda}$. As $I_{\lambda}(u)=I_{\lambda
}(|u|)$, we can chose $u_{0}$ nonnegative. We set $U_{+}:=u_{0}$. Finally, if
$u$ is a solution of $(P_{\lambda})$ and $u\not \in \mathcal{A}^{+}$ then
$I_{\lambda}(u)=\left(  \frac{1}{p}-\frac{1}{q}\right)  \int_{\Omega}%
au^{q}\geq0>I_{\lambda}(U_{+})$, i.e. $U_{+}$ is a ground state solution of
$(P_{\lambda})$.
\item First of all, since $\mathcal{S}_{+}\subset\mathcal{A}_{+}$, by Lemma
\ref{la}(3) we know that $m_{+}\geq0$. Moreover Lemma \ref{la}(2) (with
$\lambda_{n}=\lambda$ for all $n$) implies that $m_{-}\neq-\infty$ and any
minimizing sequence for $m_{\pm}$ is bounded in $X$. Since $E_{\lambda}$ is
weakly lower semi-continuous and $\mathcal{S}_{\pm}$ are weakly closed in $X$,
a standard compactness argument shows that there exist $V_{\pm}=V_{\pm
}(\lambda)\in\mathcal{S}_{\pm}$ such that $E_{\lambda}(V_{\pm})=m_{\pm}$.
Moreover since $E_{\lambda}(V_{\pm})=E_{\lambda}(|V_{\pm}|)$, we can take
$V_{\pm}\geq0$. Since $V_{+}\not \equiv 0$, we find by Lemma \ref{la}(3) that
$m_{+}>0$. If $\int_{\Omega}a\phi_{1}^{q}<0$ then $\left(  -\int_{\Omega}%
a\phi_{1}^{q}\right)  ^{-\frac{1}{q}}\phi_{1}\in\mathcal{S}_{-}$, and if
$\lambda>\lambda_{1}$ then $E_{\lambda}\left(  \phi_{1}\right)  <0$, so that
$m_{-}\leq\frac{E_{\lambda}(\phi_{1})}{\left(  -\int_{\Omega}a\phi_{1}%
^{q}\right)  ^{\frac{p}{q}}}<0$. Finally, by the Lagrange multipliers rule,
$V_{\pm}$ solve
\[
-\Delta_{p}V_{\pm}=\lambda V_{\pm}^{p-1}+\alpha_{\pm}aV_{\pm}^{q-1}%
\]
for some $\alpha_{\pm}\in\mathbb{R}$. It follows that $E_{\lambda}(V_{\pm
})=\pm\alpha_{\pm}$, i.e. $\alpha_{\pm}=\pm m_{\pm}$. Thanks to the
homogeneity of the equation above, we see that $(\pm m_{\pm})^{\frac{-1}{p-q}%
}V_{\pm}$ solve $(P_{\lambda})$. Since the above argument applies to any
$V\geq0$ achieving $m_{+}$ or $m_{-}$, this concludes the proof.
\end{enumerate}
\end{proof}

\begin{remark}
\label{nem} The condition $\lambda<\lambda^{*}$ is nearly optimal for $M$ to
be achieved and $m_{+}$ to be positive, as $M=-\infty$ and $m_{+}<0$ for
$\lambda>\lambda^{*}$. Indeed, let $u_{0}\geq0$ achieve $\lambda^{*}$, so that
$E_{\lambda}(u_{0})<0$ for $\lambda>\lambda^{*}$. We choose some nontrivial
$\psi\geq0$ supported in $\Omega_{a}^{+}$, and set $u_{s}=u_{0}+s\psi$. Then
$\int_{\Omega}au_{s}^{q}>0$ and $E_{\lambda}(u_{s})<0$ for $s>0$ small enough.
It follows that $u_{s} \in\mathcal{A}_{+}$, so that $tu_{s} \in\mathcal{A}%
_{+}$, and $I_{\lambda}(tu_{s})=\frac{t^{p}}{p}E_{\lambda}(u_{s})-\frac{t^{q}%
}{q}\int_{\Omega}au_{s}^{q} \to-\infty$ as $t \to\infty$, which yields the
first assertion. The second one follows from $m_{+} \leq\frac{E_{\lambda
}(u_{s})}{\left(  \int_{\Omega}au_{s}^{q}\right)  ^{\frac{p}{q}}}<0$.
\end{remark}

\begin{lemma}
\label{l} Let $\lambda<\lambda^{*}$. Then:

\begin{enumerate}
\item If $M$ is achieved by $U$ then $m_{+}$ is achieved by $\left(
\int_{\Omega}aU^{q}\right)  ^{-\frac{1}{q}}U$.

\item If $m_{+}$ is achieved by $V$ then $M$ is achieved by $tV$, for some
$t>0$.

\item If $m_{+}$ is achieved by $V_{+} \geq0$ then $V_{+}>0$ in $\Omega
_{a}^{+}$. In a similar way, if $m_{-}< 0$ then $V_{-}>0$ in $\Omega_{a}^{+}$,
and moreover $V_{-} \not \equiv 0$ in $\Omega_{a}^{-}$.

\item If $M$ is achieved by $U\geq0$ then $U>0$ in $\Omega_{a}^{+}$.
\end{enumerate}
\end{lemma}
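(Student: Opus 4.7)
The plan is to exploit the fibering structure of $I_\lambda$, together with Lagrange multipliers and the strong maximum principle. Every $u\in\mathcal{A}_+$ admits the decomposition $u=tV$ with $t=\bigl(\int_\Omega a|u|^q\bigr)^{1/q}>0$ and $V:=u/t\in\mathcal{S}_+$. Since $\lambda<\lambda^\ast$, Lemma~\ref{la}(3) ensures $E_\lambda(V)>0$, so the fiber map $t\mapsto I_\lambda(tV)=\tfrac{t^p}{p}E_\lambda(V)-\tfrac{t^q}{q}$ has a unique positive critical point $t_V^\ast=E_\lambda(V)^{-1/(p-q)}$, attaining the minimum value $\bigl(\tfrac{1}{p}-\tfrac{1}{q}\bigr)E_\lambda(V)^{-q/(p-q)}$. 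Because $\tfrac{1}{p}-\tfrac{1}{q}<0$, taking the infimum over $V\in\mathcal{S}_+$ gives $M=\bigl(\tfrac{1}{p}-\tfrac{1}{q}\bigr)m_+^{-q/(p-q)}$ and a one-to-one correspondence between minimizers of $M$ and of $m_+$ via the scaling $u\leftrightarrow V$. Parts 1 and 2 follow at once: if $U$ achieves $M$ then $\bigl(\int_\Omega aU^q\bigr)^{-1/q}U\in\mathcal{S}_+$ achieves $m_+$; conversely, if $V$ achieves $m_+$ then $tV$ achieves $M$ with $t=m_+^{-1/(p-q)}$.

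For Part 3, suppose $V_+\geq 0$ achieves $m_+$. The Lagrange multipliers rule (as in the proof of Lemma~\ref{l1}(2)) gives
\begin{equation*}
-\Delta_p V_+=\lambda V_+^{p-1}+m_+\,a\,V_+^{q-1},
\end{equation*}
with $m_+>0$. On each connected component $\Omega'$ of $\Omega_a^+$, we have $-\Delta_p V_+\geq\lambda V_+^{p-1}$, which reads $-\Delta_p V_+\geq 0$ if $\lambda\geq 0$ and $-\Delta_p V_++(-\lambda)V_+^{p-1}\geq 0$ with $-\lambda>0$ if $\lambda<0$; in both cases V\'azquez's strong maximum principle yields the dichotomy $V_+>0$ on $\Omega'$ or $V_+\equiv 0$ on $\Omega'$. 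To rule out the second alternative I would pick $\psi\in C_c^\infty(\Omega')$ nonnegative and nontrivial, and use that $V_+\equiv 0$ and $\nabla V_+\equiv 0$ on $\mathrm{supp}(\psi)$ to obtain
\begin{equation*}
E_\lambda(V_++\varepsilon\psi)=m_++\varepsilon^p E_\lambda(\psi),\qquad \int_\Omega a(V_++\varepsilon\psi)^q=1+\varepsilon^q\!\int_\Omega a\psi^q.
\end{equation*}
Renormalizing $W_\varepsilon:=(V_++\varepsilon\psi)/\bigl(1+\varepsilon^q\!\int_\Omega a\psi^q\bigr)^{1/q}\in\mathcal{S}_+$ and Taylor-expanding as $\varepsilon\to 0^+$ yields
\begin{equation*}
E_\lambda(W_\varepsilon)=m_+-\frac{p\,m_+}{q}\,\varepsilon^q\!\int_\Omega a\psi^q+o(\varepsilon^q)<m_+,
\end{equation*}
since $q<p$, $m_+>0$ and $\int_\Omega a\psi^q>0$, contradicting the definition of $m_+$. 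The analogous argument for $V_-$ uses the Lagrange equation $-\Delta_p V_-=\lambda V_-^{p-1}+(-m_-)\,a\,V_-^{q-1}$ with $-m_->0$, the renormalization $\mu=(1-\varepsilon^q\!\int_\Omega a\psi^q)^{-1/q}$, and the sign $m_-<0$ to produce the same kind of contradiction. The claim $V_-\not\equiv 0$ in $\Omega_a^-$ is immediate: otherwise $aV_-^q\geq 0$ pointwise on $\Omega$, contradicting $\int_\Omega aV_-^q=-1$.

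Part 4 is then a direct consequence of Parts 1 and 3: if $U\geq 0$ achieves $M$ then $W:=\bigl(\int_\Omega aU^q\bigr)^{-1/q}U\geq 0$ achieves $m_+$, so $W>0$ in $\Omega_a^+$ by Part 3, and $U$ inherits this positivity as a positive multiple of $W$.

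I expect the main difficulty to lie in excluding the trivial alternative of the SMP dichotomy in Part 3. The computation itself is short but rests on a delicate balance: the constraint-driven renormalization affects $E_\lambda$ at order $\varepsilon^q$, while the pure gradient/$L^p$ perturbation contributes only at order $\varepsilon^p$; since $q<p$ the constraint term dominates, and its sign is controlled by the sign of $m_\pm$. Care must also be taken to verify that V\'azquez's strong maximum principle applies uniformly as $\lambda$ crosses $0$, handled by rewriting $-\Delta_p V\geq \lambda V^{p-1}$ differently in each sign regime.
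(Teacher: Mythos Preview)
Your proof is correct and follows essentially the same route as the paper. For parts (1)--(2) you make the fibering structure explicit, deriving the closed formula $M=\bigl(\tfrac{1}{p}-\tfrac{1}{q}\bigr)m_+^{-q/(p-q)}$ and the resulting bijection between minimizers, whereas the paper argues by two ad hoc comparison inequalities; the underlying idea (optimize $I_\lambda$ along rays $t\mapsto tV$ with $V\in\mathcal{S}_+$) is identical, and your presentation is arguably cleaner. For part (3) the paper proceeds exactly as you do: assume a zero in $\Omega_a^+$, use the strong maximum principle to get $V_\pm\equiv0$ on an open subset, add a bump $\phi$ supported there, and observe that the $\varepsilon^q$-order change in the constraint dominates the $\varepsilon^p$-order change in $E_\lambda$, contradicting minimality; the paper phrases this as the quotient inequality $\frac{m_\pm+t^pE_\lambda(\phi)}{(1\pm t^q\int a\phi^q)^{p/q}}<m_\pm$, which is the unexpanded version of your Taylor computation. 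Part (4) is identical in both.
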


\begin{proof}
\strut
\begin{enumerate}
\item Let $U\in\mathcal{A}_{+}$ be such that $I_{\lambda}(U)=M$ and
$V\in\mathcal{S}_{+}$ be such that $E_{\lambda}(V)=m_{+}$. Then, for any
$t\in\mathbb{R}$,
\begin{align*}
\frac{1}{p}E_{\lambda}(U)-\frac{1}{q}\int_{\Omega}a|U|^{q} &  =I_{\lambda
}(U)\leq I_{\lambda}(tV)\\
&  =\frac{t^{p}}{p}E_{\lambda}(V)-\frac{t^{q}}{q}\int_{\Omega}a|V|^{q}%
=\frac{t^{p}}{p}m_{+}-\frac{t^{q}}{q}.
\end{align*}
We choose $t=t_{0}:=\left(  \int_{\Omega}a|U|^{q}\right)  ^{\frac{1}{q}}$, so
that
\[
\frac{1}{p}E_{\lambda}(U)-\frac{t_{0}^{q}}{q}\leq\frac{t_{0}^{p}}{p}%
m_{+}-\frac{t_{0}^{q}}{q}.
\]
Thus $E_{\lambda}(t_{0}^{-1}U)\leq m_{+}$, which yields the desired conclusion.
\item We use a similar trick. Let $U\in\mathcal{A}_{+}$ be such that
$I_{\lambda}(U)=M$, and set $t_{0}$ as in (1). Note that
\[
I_{\lambda}(t_{0}V)=\frac{t_{0}^{p}}{p}E_{\lambda}\left(  V\right)
-\frac{t_{0}^{q}}{q}\leq\frac{t_{0}^{p}}{p}E_{\lambda}\left(  t_{0}%
^{-1}U\right)  -\frac{t_{0}^{q}}{q}=\frac{E_{\lambda}(U)}{p}-\frac{t_{0}^{q}%
}{q}=M,
\]
and since $t_{0}V\in\mathcal{A}_{+}$, we have $I_{\lambda}(tV)=M$.
\item Assume by contradiction that $V_{\pm}(x_{0})=0$ for some $x_{0}\in
\Omega_{a}^{+}$. Then, by the strong maximum principle \cite{Va}, $V_{\pm}\equiv0$ in a
ball $B\subset\Omega_{a}^{+}$. We choose then $\phi\in C_{0}^{\infty}(B)$ with
$\phi\geq0,\not \equiv 0$, and extend it by zero to $\Omega$. Note that
$V_{\pm}+t\phi\in\mathcal{A}_{\pm}$ for $t>0$ small enough. Moreover,
\[
\frac{E_{\lambda}(V_{\pm}+t\phi)}{\left(  \pm\int_{\Omega}a|V_{\pm}+t\phi
|^{q}\right)  ^{\frac{p}{q}}}=\frac{m_{\pm}+t^{p}E_{\lambda}(\phi)}{\left(
1\pm t^{q}\int_{\Omega}a\phi^{q}\right)  ^{\frac{p}{q}}}<m_{\pm}%
\]
for $t>0$ small enough, providing us with a contradiction. Indeed, note that
the above inequality holds if, and only if,
\[
E_{\lambda}(\phi)<m_{\pm}\frac{\left(  1\pm t^{q}\int_{\Omega}a\phi
^{q}\right)  ^{\frac{p}{q}}-1}{t^{p}},
\]
which holds for $t>0$ small enough, since $m_{\pm}\gtrless0$ and
\[
\lim_{t\rightarrow0^{+}}\frac{\left(  1\pm t^{q}\int_{\Omega}a\phi^{q}\right)
^{\frac{p}{q}}-1}{t^{p}}=\lim_{t\rightarrow0^{+}}\frac{\left(  1\pm t^{q}%
\int_{\Omega}a\phi^{q}\right)  ^{\frac{p}{q}-1}\left(  \pm\int_{\Omega}%
a\phi^{q}\right)  }{t^{p-q}}=\pm\infty.
\]
Finally, it is clear that $V_{-}\not \equiv 0$ in $\Omega_{a}^{-}$ since
$V_{-}\in\mathcal{S}_{-}$.
\item It follows directly from items (1) and (3).
\end{enumerate}
\end{proof}

\begin{lemma}
\label{mp} Assume that $\int_{\Omega}a\phi_{1}^{q}<0$ and $\lambda_{1}%
<\lambda<\lambda^{\ast}$. If $m_{-}$ is achieved by $V$ then $I_{\lambda
}((-m_{-})^{\frac{-1}{p-q}}V)=\displaystyle\inf_{u\not \equiv 0}\sup
_{t>0}I_{\lambda}(tu)=\min_{u\in E_{\lambda}^{-}}\max_{t>0}I_{\lambda}(tu)>0$.
In particular, $(-m_{-})^{\frac{-1}{p-q}}V$ has the least energy among
solutions in $\mathcal{A}_{-}$.
\end{lemma}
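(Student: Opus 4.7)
The plan is to compute $\sup_{t>0}I_\lambda(tu)$ for each $u\not\equiv 0$ and show that it is finite precisely when $u\in E_\lambda^-$, after which the outer infimum will reduce to the minimization problem already defining $m_-$. Write
\[
g_u(t):=I_\lambda(tu)=\frac{t^p}{p}E_\lambda(u)-\frac{t^q}{q}\int_\Omega a|u|^q,\qquad g_u'(t)=t^{q-1}\Bigl(t^{p-q}E_\lambda(u)-\int_\Omega a|u|^q\Bigr).
\]
If $u\in\overline{\mathcal{A}_+}\setminus\{0\}$, then Lemma \ref{la}(3) yields $E_\lambda(u)>0$ and hence $g_u(t)\to\infty$ as $t\to\infty$. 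If $u\in\mathcal{A}_-$ with $E_\lambda(u)\ge 0$, then $g_u'>0$ on $(0,\infty)$ (since $-\int_\Omega a|u|^q>0$), so $g_u$ is strictly increasing and again $\sup g_u=\infty$. Using the inclusion $E_\lambda^-\subset\mathcal{A}_-$ for $\lambda_1<\lambda<\lambda^*$ (as noted prior to the statement), these two cases exhaust $X\setminus(E_\lambda^-\cup\{0\})$; hence $\sup_{t>0}I_\lambda(tu)<\infty$ if and only if $u\in E_\lambda^-$, and the second equality in \eqref{mm} follows once the first is established.

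On $E_\lambda^-$, setting $A:=-E_\lambda(u)>0$ and $B:=-\int_\Omega a|u|^q>0$, the equation $g_u'(t)=0$ has a unique positive root $t_0(u)=(B/A)^{1/(p-q)}$, which is a strict maximum (as the sign of $g_u''$ shows), and a routine computation gives
\[
\sup_{t>0}g_u(t)=g_u(t_0(u))=\frac{p-q}{pq}\,\frac{B^{p/(p-q)}}{A^{q/(p-q)}}.
\]
The right-hand side is invariant under $u\mapsto su$ for $s>0$, so normalizing $u\in\mathcal{S}_-$ (i.e.\ $B=1$) reduces the minimization to
\[
\inf_{u\in E_\lambda^-}\sup_{t>0}I_\lambda(tu)=\frac{p-q}{pq}(-m_-)^{-q/(p-q)},
\]
which is strictly positive since $m_-<0$ by Lemma \ref{l1}(2)(b). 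A direct substitution shows that $U_-:=(-m_-)^{-1/(p-q)}V$ realizes this value (the scale factor is precisely the one making $t_0(U_-)=1$), so the infimum is attained, converting both ``inf'' and ``sup'' in \eqref{mm} into ``min'' and ``max''.

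For the final claim, let $w$ be a nontrivial solution of $(P_\lambda)$ belonging to $\mathcal{A}_-$; testing $(P_\lambda)$ against $w$ gives $E_\lambda(w)=\int_\Omega a|w|^q<0$, hence $w\in E_\lambda^-$. Since $w$ is a critical point of $I_\lambda$, the value $t=1$ is a critical point of $g_w$, which by the analysis above is its unique maximum; therefore $I_\lambda(w)=\sup_{t>0}I_\lambda(tw)\ge I_\lambda(U_-)$, establishing that $U_-$ has minimal energy among solutions in $\mathcal{A}_-$. The only conceptual step beyond one-variable calculus is the observation that $\sup_{t>0}g_u$ is scale-invariant on $E_\lambda^-$; this is what causes the min--max quantity to collapse onto the scalar $m_-$, and everything else is book-keeping.
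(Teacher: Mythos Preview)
Your proof is correct and follows essentially the same route as the paper: both arguments show that $\sup_{t>0}I_\lambda(tu)=\infty$ unless $u\in E_\lambda^-$, compute the explicit maximum $\bigl(\tfrac{1}{q}-\tfrac{1}{p}\bigr)(-\int_\Omega a|u|^q)^{p/(p-q)}/(-E_\lambda(u))^{q/(p-q)}$ on $E_\lambda^-$, reduce the outer infimum to the definition of $m_-$, and then observe that any solution $w\in\mathcal{A}_-$ satisfies $E_\lambda(w)=\int_\Omega a w^q<0$ with $t=1$ as the maximizer of $t\mapsto I_\lambda(tw)$. Your slightly more explicit case split (separating $\overline{\mathcal{A}_+}\setminus\{0\}$ from $\mathcal{A}_-\cap\{E_\lambda\ge 0\}$) and your appeal to scale invariance to normalize onto $\mathcal{S}_-$ are cosmetic variations on the paper's direct computation.
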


\begin{proof}
First of all, note that $\sup_{t>0}I_{\lambda}(tu)=\infty$ if $u\not \equiv 0$
and $E_{\lambda}(u)\geq0$. Thus we shall consider this supremum for $u\in
E_{\lambda}^{-}$, in which case $u\in\mathcal{A}_{-}$ (since $\lambda
<\lambda^{\ast}(q)$) and $\sup_{t>0}I_{\lambda}(tu)=\max_{t>0}I_{\lambda}%
(tu)>0$. Set $i_{u}(t):=I_{\lambda}(tu)$. Then $i_{u}$ has a unique global
maximum point $t_{0}(u):=\left(  \frac{\int_{\Omega}a|u|^{q}}{E_{\lambda}%
(u)}\right)  ^{\frac{1}{p-q}}$ and
\begin{equation}
\max_{t>0}I_{\lambda}(tu)=i_{u}(t_{0}(u))=\left(  \frac{1}{q}-\frac{1}%
{p}\right)  \frac{(-\int_{\Omega}a|u|^{q})^{\frac{p}{p-q}}}{(-E_{\lambda
}(u))^{\frac{q}{p-q}}}.\label{max}%
\end{equation}
On the other hand, set $U:=t_{0}\left(  V\right)  V$. Note that $t_{0}%
(V)=\left(  -m_{-}\right)  ^{\frac{-1}{p-q}}$. From (\ref{max}) we see that%
\[
I_{\lambda}(U)=I_{\lambda}(t_{0}\left(  V\right)  V)=\max_{t>0}I_{\lambda
}(tV)=\left(  \frac{1}{q}-\frac{1}{p}\right)  (-m_{-})^{\frac{-q}{p-q}}.
\]
Now, if $u\in\mathcal{A}_{-}$ then $m_{-}\leq\frac{E_{\lambda}(u)}%
{(-\int_{\Omega}a|u|^{q})^{\frac{p}{q}}}$, so that
\[
(-m_{-})^{\frac{-q}{p-q}}\leq\frac{(-\int_{\Omega}a|u|^{q})^{\frac{p}{p-q}}%
}{(-E_{\lambda}(u))^{\frac{q}{p-q}}}.
\]
Thus $I_{\lambda}(U)\leq\max_{t>0}I_{\lambda}(tu)$ for every $u\in E_{\lambda
}^{-}$. In addition, equality holds when $u=U$.
Finally,  if $u \in \mathcal{A}_-$ solves $(P_\lambda)$ then $u \in E_\lambda^-$ and $I_\lambda(u)=\max_{t>0}I_{\lambda}(tu)$, so
$I_\lambda(U)\leq I_\lambda(u)$, as claimed.
\end{proof}

\begin{remark}
\label{lin} Let $\{a_{n}\}$ be a sequence such that $a_{n} \to a$ in
$C(\overline{\Omega})$ with $a<0$ in $\Omega$. Then $\lambda^{*}(a_{n})
\to\infty$. Indeed, otherwise we can assume that $\lambda^{*}(a_{n})$ remains
bounded. It follows that if $u_{n}$ achieve $\lambda^{*}(a_{n})$ then
$\{u_{n}\}$ is bounded in $X$ and we can assume that $u_{n} \rightharpoonup u$
in $X$. Thus $\int_{\Omega}|u|^{p}=1$ and $\int_{\Omega}a|u|^{q} =\lim
\int_{\Omega}a_{n}|u_{n}|^{q} \geq0$. But since $a<0$ in $\Omega$ we must have
$u \equiv0$, contradicting $\int_{\Omega}|u|^{p}=1$.
\end{remark}

\section{Uniqueness results}

The following result extends \cite[Proposition 2.3]{KRQUpp}, which applies to
the case $\lambda=0$ and is just a reformulation of \cite[Theorem 1.1]{KLP}.
We rely here on a \textit{hidden convexity} property, namely, the fact that
$|\nabla u|^{p}$ is convex along the path $\left(  tu^{q}+(1-t)v^{q}\right)
^{\frac{1}{q}}$ with $t \in[0,1]$ and $0\leq u,v \in X$, cf. \cite[Proposition
2.6]{BF}. \newline

\begin{proposition}
\label{um} If $\lambda<0$ then $m_{+}$ is achieved by exactly one $V_{+}
\geq0$.
\end{proposition}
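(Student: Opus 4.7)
The existence of a nonnegative minimizer of $m_+$ is already contained in Lemma \ref{l1}(2), so the task reduces to uniqueness. My plan is to argue by contradiction: suppose $V_1,V_2\geq 0$ both achieve $m_+$, and deduce $V_1\equiv V_2$ by exploiting the hidden convexity cited before the statement along the interpolation path
\[
w_t := \bigl(t V_1^{q}+(1-t)V_2^{q}\bigr)^{1/q},\qquad t\in[0,1].
\]
By \cite[Proposition 2.6]{BF} one has $w_t\in X$ for every $t\in[0,1]$, and the map $t\mapsto \int_\Omega |\nabla w_t|^p$ is convex. Since $\int_\Omega a\,u^q$ depends linearly on $u^q$, I immediately get $\int_\Omega a\,w_t^q = t + (1-t) = 1$, so that $w_t\in\mathcal{S}_+$, and hence $E_\lambda(w_t)\geq m_+$ for all $t$.

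For the lower-order term I would use the strict convexity of $s\mapsto s^{p/q}$ on $[0,\infty)$ (valid because $p/q>1$), which yields the pointwise bound
\[
w_t^{p}=\bigl(tV_1^q+(1-t)V_2^q\bigr)^{p/q}\leq t V_1^{p}+(1-t)V_2^{p},
\]
with equality at $x$ if and only if $V_1(x)=V_2(x)$. Multiplying by $-\lambda>0$, integrating, and combining with the BF convexity of the gradient part, I obtain
\[
E_\lambda(w_t)\leq t\,E_\lambda(V_1)+(1-t)\,E_\lambda(V_2)=m_+,
\]
and the inequality is \emph{strict} for $t\in(0,1)$ as soon as $V_1\not\equiv V_2$. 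This contradicts $E_\lambda(w_t)\geq m_+$ and therefore forces $V_1\equiv V_2$.

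The role of the hypothesis $\lambda<0$ is precisely to feed a strictly convex term $-\lambda\|\cdot\|_p^{p}$ into $E_\lambda$ along the path $w_t$, thereby converting the (generally non strict) hidden convexity of the Dirichlet part into a strict inequality sufficient for uniqueness. I do not anticipate any serious technical obstacle: the Sobolev regularity of $w_t$ and the convexity of $t\mapsto\int_\Omega|\nabla w_t|^p$ are exactly what \cite[Proposition 2.6]{BF} provides, and the remaining step is the elementary pointwise comparison driven by strict convexity of $s\mapsto s^{p/q}$.
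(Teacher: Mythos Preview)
Your argument is correct and shares the paper's core idea: interpolate along the hidden-convexity path $w_t=(tV_1^q+(1-t)V_2^q)^{1/q}$, observe that $w_t\in\mathcal S_+$, and show $E_\lambda(w_t)<m_+$ for $t\in(0,1)$ unless $V_1\equiv V_2$. The difference lies in where the \emph{strict} inequality comes from. You extract it from the lower-order term: since $s\mapsto s^{p/q}$ is strictly convex and $-\lambda>0$, the contribution $-\lambda\int_\Omega w_t^p$ alone yields a strict inequality whenever $V_1\not\equiv V_2$ on a set of positive measure. The paper instead works at the midpoint $t=\tfrac12$, uses only the non-strict bound $W^p\le\tfrac12(V_1^p+V_2^p)$, and obtains strictness from the equality case of the pointwise gradient inequality $|\nabla W|^p\le\tfrac12(|\nabla V_1|^p+|\nabla V_2|^p)$: equality forces $V_1(x)=V_2(x)$ or $\nabla V_1(x)=\nabla V_2(x)=0$ a.e., hence $\nabla V_1=\nabla V_2$ a.e., so $V_1-V_2$ is constant, and the constant must vanish. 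Your route is shorter and more direct under the stated hypothesis $\lambda<0$; the paper's route has the advantage that it also covers $\lambda=0$ (relevant for the authors' earlier work \cite{KRQUpp}), where the lower-order term is absent and strictness must be squeezed out of the gradient part.
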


\begin{proof}
We proceed as in \cite[Proposition 2.3]{KRQUpp}. Assume that
$V_{1},V_{2}\geq0$, $V_{1},V_{2}\in\mathcal{S}_{+}$ and $E_{\lambda}%
(V_{1})=E_{\lambda}(V_{2})=m_{+}$. Then $W:=\left(  \frac{V_{1}^{q}+V_{2}^{q}%
}{2}\right)  ^{\frac{1}{q}}$ satisfies $W\in\mathcal{S}_{+}$, and (as shown in
\cite[Proposition 2.3]{KRQUpp})
\[
|\nabla W|^{p}\leq\frac{1}{2}\left(  |\nabla V_{1}|^{p}+|\nabla V_{2}%
|^{p}\right)  ,
\]
with strict inequality in the set
\[
F:=\{x\in\Omega_{V_{1}}^{+}\cup\Omega_{V_{2}}^{+}:V_{1}(x)\neq V_{2}%
(x),|\nabla V_{1}(x)|+|\nabla V_{2}(x)|\neq0\}.
\]
Moreover, by convexity we have
\[
W^{p}=\left(  \frac{V_{1}^{q}+V_{2}^{q}}{2}\right)  ^{\frac{p}{q}}\leq
\frac{V_{1}^{p}+V_{2}^{p}}{2}.
\]
Hence, since $\lambda\leq0$, we find that
\[
E_{\lambda}(W)\leq\frac{1}{2}\left(  E_{\lambda}(V_{1})+E_{\lambda}%
(V_{2})\right)  =m_{+},
\]
with strict inequality if $|F|>0$. Thus $|F|=0$, so that for almost every
$x\in\Omega$ we have
\[
V_{1}(x)=V_{2}(x)\quad\text{or}\quad\nabla V_{1}(x)=\nabla V_{2}(x)=0.
\]
In particular, $\nabla V_{1}=\nabla V_{2}$ a.e. in $\Omega$, so $V_{1}\equiv
V_{2}+C$, for some constant $C$. If $C\neq0$ then, from the alternative above,
we have $\nabla V_{1}=\nabla V_{2}=0$ a.e. in $\Omega$, which is impossible.
Therefore $V_{1}\equiv V_{2}$, and the proof is complete.
\end{proof}

\begin{corollary}
\label{un} If $\lambda<0$ then $M$ is achieved by exactly one $U_{+}%
=U_{+}(\lambda)\geq0$, namely, $U_{+} \equiv m_{+}^{-\frac{1}{p-q}}V_{+}$,
where $V_{+}$ is the unique nonnegative minimizer associated to $m_{+}$.
\end{corollary}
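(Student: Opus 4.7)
My plan is to assemble Corollary \ref{un} from four previously established pieces: the existence provided by Lemma \ref{l1}(1), the two-way correspondence between minimizers of $M$ and nonnegative minimizers of $m_{+}$ given by Lemma \ref{l}(1)--(2), the uniqueness of the nonnegative minimizer of $m_{+}$ proved in Proposition \ref{um}, and the ``scaling rigidity'' of Lemma \ref{la}(4). Since the substantive work (the hidden convexity argument) has already been carried out in the proof of Proposition \ref{um}, this corollary should be essentially a book-keeping step.

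First I would invoke Lemma \ref{l1}(1) to obtain a nonnegative $U_{+} \in \mathcal{A}_{+}$ with $I_{\lambda}(U_{+})=M$; in particular $\int_{\Omega} a U_{+}^{q}>0$ and $U_{+}$ solves $(P_{\lambda})$. For uniqueness, suppose $U_{1},U_{2}\geq 0$ both achieve $M$. By Lemma \ref{l}(1), the rescalings $W_{i}:=\bigl(\int_{\Omega} a U_{i}^{q}\bigr)^{-1/q}U_{i}$ both achieve $m_{+}$ and are nonnegative, so Proposition \ref{um} forces $W_{1}\equiv W_{2}\equiv V_{+}$. Hence $U_{1}$ and $U_{2}$ are positive scalar multiples of the same function $V_{+}$, i.e.\ $U_{1}\equiv t U_{2}$ for some $t>0$; as both solve $(P_{\lambda})$ with $\int_{\Omega} a U_{i}^{q}\neq 0$, Lemma \ref{la}(4) gives $U_{1}\equiv U_{2}$.

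To pin down the explicit formula $U_{+}\equiv m_{+}^{-1/(p-q)}V_{+}$, I would write $U_{+}=c\,V_{+}$ with $c>0$ and use the fact that $U_{+}$ solves $(P_{\lambda})$ together with $E_{\lambda}(V_{+})=m_{+}$ and $\int_{\Omega} a V_{+}^{q}=1$. Testing $(P_{\lambda})$ against $U_{+}$ gives $E_{\lambda}(U_{+})=\int_{\Omega} a U_{+}^{q}=c^{q}$, while the homogeneity of $E_{\lambda}$ yields $E_{\lambda}(U_{+})=c^{p}m_{+}$. Equating the two expressions produces $c^{p-q}=m_{+}^{-1}$, that is $c=m_{+}^{-1/(p-q)}$, which is exactly the claimed identity.

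I do not anticipate a serious obstacle: once Proposition \ref{um} is in hand, the delicate strong-maximum-principle/hidden-convexity step has already been paid for, and what remains is algebraic manipulation plus the elementary alternative in Lemma \ref{la}(4) to convert a proportionality statement into an equality between solutions. The only thing to be careful about is to verify that $\int_{\Omega} a U_{+}^{q}>0$ before dividing by it, but this is immediate from $U_{+}\in\mathcal{A}_{+}$.
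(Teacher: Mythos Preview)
Your proposal is correct and follows essentially the same route as the paper: both arguments use Lemma \ref{l}(1) together with Proposition \ref{um} to see that any nonnegative minimizer of $M$ is a positive multiple of $V_{+}$, and then invoke Lemma \ref{la}(4) to fix the scaling. The only cosmetic difference is that the paper identifies the constant by citing Lemma \ref{l1}(2-a) (which already says $m_{+}^{-1/(p-q)}V_{+}$ solves $(P_{\lambda})$) and applying Lemma \ref{la}(4) once, whereas you first compare two minimizers and then recompute the constant via $E_{\lambda}(U_{+})=\int_{\Omega}aU_{+}^{q}$; the content is the same.
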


\begin{proof}
If $U\geq 0$ achieves $M$, then $U$ solves $(P_\lambda)$. Moreover, by Lemma \ref{l}(1) we know that $\left(  \int_{\Omega
}aU^{q}\right)  ^{-\frac{1}{q}}U$ achieves $m_{+}$ and so, from Proposition \ref{um}, we have $U\equiv tV_+$, with $t=\left(  \int_{\Omega
}aU^{q}\right)  ^{\frac{1}{q}}$. On the other hand, by Lemma \ref{l1}(2-a) we know that $m_{+}^{-\frac{1}{p-q}}V_{+}$ also solves $(P_\lambda)$. Since $\int_\Omega aU^q>0$, Lemma \ref{la}(4) yields that $U \equiv m_{+}^{-\frac{1}{p-q}}V_{+}$.
\end{proof}

\begin{remark}
For $0<\lambda<\lambda^{\ast}$ we still have a nonnegative minimizer $V_{+}$
for $m_{+}$, but we don't know if this one is unique. In any case $U_{+}%
=m_{+}^{\frac{-1}{p-q}}V_{+}$ still provides a minimizer for $M$.
\end{remark}

Let us show that $U_{+}$ is the only solution of $(P_{\lambda})$ satisfying
$U_{+}>0$ in $\Omega_{a}^{+}$ when $\lambda<0$. We follow the argument of
\cite[Theorem 5.1]{BF}, which deals with the case $a \equiv1$ and $\lambda=0$.
The following two inequalities shall be used:

\begin{lemma}
[Generalized Picone's identity]\label{lp}If $u,v\in W^{1,p}(\Omega)$ with
$u>0$ and $v\geq0$ then
\[
|\nabla u|^{p-2}\nabla u\nabla\left(  \frac{v^{q}}{u^{q-1}}\right)
\leq|\nabla u|^{p-q}|\nabla v|^{q}.
\]

\end{lemma}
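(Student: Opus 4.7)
The plan is to reduce the desired inequality to a pointwise algebraic one via the chain rule, and then close it with Cauchy--Schwarz followed by Young's inequality. First, using $u>0$, I would expand
\[
\nabla\!\left(\frac{v^{q}}{u^{q-1}}\right)=q\,\frac{v^{q-1}}{u^{q-1}}\nabla v-(q-1)\,\frac{v^{q}}{u^{q}}\nabla u,
\]
so that
\[
|\nabla u|^{p-2}\nabla u\cdot\nabla\!\left(\frac{v^{q}}{u^{q-1}}\right)=q\,\frac{v^{q-1}}{u^{q-1}}\,|\nabla u|^{p-2}\nabla u\cdot\nabla v-(q-1)\,\frac{v^{q}}{u^{q}}\,|\nabla u|^{p}.
\]
By Cauchy--Schwarz, $|\nabla u|^{p-2}\nabla u\cdot\nabla v\leq |\nabla u|^{p-1}|\nabla v|$, which preserves the direction of the desired inequality since its coefficient $qv^{q-1}/u^{q-1}$ is nonnegative. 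Hence it suffices to establish the scalar inequality
\[
q\,\frac{v^{q-1}}{u^{q-1}}\,|\nabla u|^{p-1}|\nabla v|-(q-1)\,\frac{v^{q}}{u^{q}}\,|\nabla u|^{p}\leq |\nabla u|^{p-q}|\nabla v|^{q}
\]
almost everywhere on $\Omega$.

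At points where $|\nabla u|=0$ the bound is trivial, as both sides reduce to $0$ (using $p>q>1$ for the right-hand side, and the convention $|\nabla u|^{p-2}\nabla u=0$ for the left-hand side). Elsewhere I would divide through by $|\nabla u|^{p-q}>0$ and set $A:=|\nabla v|$ and $B:=(v/u)|\nabla u|$, which recasts the required inequality as
\[
q\,A\,B^{q-1}\leq A^{q}+(q-1)\,B^{q}.
\]
This is exactly Young's inequality with conjugate exponents $q$ and $q/(q-1)$: applying $\alpha\beta\leq \alpha^{q}/q+(q-1)\beta^{q/(q-1)}/q$ to $\alpha=A$, $\beta=B^{q-1}$ and multiplying by $q$ yields the stated bound (and in particular shows that equality holds precisely when $A=B$, i.e.\ when $u\nabla v=v\nabla u$).

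There is no serious obstacle here: the assumption $u>0$ in $\Omega$ makes $v^{q}/u^{q-1}$ a legitimate element on which to compute the chain rule a.e., and the hypothesis $1<q<p$ of the paper (implicit in its use of the lemma) is exactly what Young's inequality needs. The only minor care is justifying the pointwise computation in Sobolev sense and treating the zero set $\{|\nabla u|=0\}$ separately, which, as noted above, is immediate. Thus the proof consists of the three standard ingredients: chain rule, Cauchy--Schwarz, and Young's inequality, chained in that order.
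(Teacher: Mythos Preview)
Your proof is correct. The paper does not supply its own argument for this lemma; it simply records that the inequality is a particular case of \cite[Proposition 2.9]{BF}. Your self-contained route---chain rule, Cauchy--Schwarz, then Young's inequality with exponents $q$ and $q/(q-1)$---is exactly the standard derivation of such Picone-type estimates and is, in substance, what the cited reference carries out in greater generality. So there is no genuine methodological difference: you have just written out explicitly what the paper delegates to \cite{BF}.
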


The previous lemma is a particular case of \cite[Proposition 2.9]{BF}.

\begin{lemma}
\label{in} We have $b^{1-t}c^{t}+d^{1-t}e^{t}\leq(b+d)^{1-t}(c+e)^{t}$ for any
$b,c,d,e>0$ and $t \in[0,1]$.
\end{lemma}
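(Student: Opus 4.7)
The plan is to recognize this as a direct instance of Hölder's inequality for two-term sums, applied with carefully chosen exponents. The boundary cases $t=0$ and $t=1$ give immediate equality (both sides equal $b+d$ and $c+e$, respectively), so the content lies in $t\in(0,1)$.

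For $t\in(0,1)$, I would set the conjugate exponents
\[
\alpha=\frac{1}{1-t},\qquad \beta=\frac{1}{t},
\]
which satisfy $\tfrac{1}{\alpha}+\tfrac{1}{\beta}=1$. Applying the discrete Hölder inequality
\[
\sum_{i=1}^{2}x_{i}y_{i}\le\Bigl(\sum_{i=1}^{2}x_{i}^{\alpha}\Bigr)^{1/\alpha}\Bigl(\sum_{i=1}^{2}y_{i}^{\beta}\Bigr)^{1/\beta}
\]
to the choices $x_{1}=b^{1-t}$, $x_{2}=d^{1-t}$, $y_{1}=c^{t}$, $y_{2}=e^{t}$ produces, since $x_{i}^{\alpha}\in\{b,d\}$ and $y_{i}^{\beta}\in\{c,e\}$, exactly the claimed bound
\[
b^{1-t}c^{t}+d^{1-t}e^{t}\le(b+d)^{1-t}(c+e)^{t}.
\]

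An equally short alternative avoiding a direct appeal to Hölder would be to divide through by $(b+d)^{1-t}(c+e)^{t}$, set $\sigma=b/(b+d)\in(0,1)$ and $\tau=c/(c+e)\in(0,1)$, and reduce the inequality to
\[
\sigma^{1-t}\tau^{t}+(1-\sigma)^{1-t}(1-\tau)^{t}\le 1,
\]
which is immediate from the weighted AM--GM (Young) inequality $x^{1-t}y^{t}\le(1-t)x+ty$ applied termwise and summed. Either route is essentially a one-line argument; there is no significant obstacle, and no appeal to earlier results in the paper is required.
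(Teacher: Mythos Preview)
Your proof is correct. Both routes you propose (discrete H\"older with exponents $\alpha=1/(1-t)$, $\beta=1/t$, or normalization followed by Young's inequality) are valid and indeed make the lemma a one-liner.

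The paper's own proof takes a different, more hands-on route: it divides through by $d^{1-t}e^t$ to reduce to the inequality $x^{1-t}y^t+1\le(x+1)^{1-t}(y+1)^t$ with $x=b/d$, $y=c/e$, then fixes $y,t$ and studies the single-variable function $f(x)=(x+1)^{1-t}(y+1)^t-x^{1-t}y^t-1$, checking by calculus that its global minimum occurs at $x=y$ with value $f(y)=0$. Your approach is cleaner and more conceptual, immediately identifying the statement as H\"older (or its AM--GM corollary), which also transparently gives the equality case $b/d=c/e$. The paper's argument is self-contained and avoids invoking any named inequality, but at the cost of a derivative computation.
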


\begin{proof}
Note that setting $x:=\frac{b}{d}$ and $y:=\frac{c}{e}$ the above inequality
reads $x^{1-t}y^{t}+1\leq(x+1)^{1-t}(y+1)^{t}$. We fix $y$ and $t$, and set
$f(x):=(x+1)^{1-t}(y+1)^{t}-x^{1-t}y^{t}-1$. One may easily check that $f$ has
a global minimum at $x=y$, and $f(y)=0$, which yields the desired inequality.
\end{proof}

\begin{proposition}
\label{us} If $\lambda<0$ then $U_{+}$ is the only solution of $(P_{\lambda})$
such that $U_{+}>0$ in $\Omega_{a}^{+}$.
\end{proposition}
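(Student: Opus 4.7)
The plan is to combine Lemma \ref{lp} (generalized Picone's identity) with Lemma \ref{in} to show that any two solutions of $(P_\lambda)$ that are positive in $\Omega_a^+$ must have the same $E_\lambda$-energy, and then trace the equality cases to conclude that they are proportional, hence equal by Lemma \ref{la}(4). Let $u:=U_+$ and let $v$ be any solution of $(P_{\lambda})$ with $v>0$ in $\Omega_a^+$. Since $\lambda<0$ and $v\not\equiv 0$, testing $(P_\lambda)$ against $v$ itself gives $E_\lambda(v)=\int_\Omega av^q>0$, so $v\in\mathcal{A}_+$; the same holds for $u$.

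I would test the equation for $u$ with $\psi_\epsilon:=v^q/(u+\epsilon)^{q-1}$, which is an admissible element of $X$ for $\epsilon>0$ because $u\in C^{1,\alpha}(\overline{\Omega})$, and apply Lemma \ref{lp} to the strictly positive function $u+\epsilon$ together with $v$. Letting $\epsilon\to 0^+$ by dominated convergence, and using that $\{u=0\}\cap\Omega_a^+=\emptyset$ (so that the ``discarded'' piece $\int_{\{u=0\}}av^q$ is non-positive and may be dropped without reversing the inequality), this step yields
\[
\int_\Omega av^q+\lambda\int_\Omega u^{p-q}v^q\;\leq\;\int_\Omega|\nabla u|^{p-q}|\nabla v|^q.
\]

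Next, I would bound both summands on the right by H\"older's inequality with conjugate exponents $p/(p-q)$ and $p/q$, using $-\lambda>0$ to preserve the inequality for the $L^p$-term, and then apply Lemma \ref{in} with $t=q/p$, $b=\|\nabla u\|_p^p$, $c=\|\nabla v\|_p^p$, $d=-\lambda\|u\|_p^p$, $e=-\lambda\|v\|_p^p$ to collapse the two resulting summands into $E_\lambda(u)^{(p-q)/p}E_\lambda(v)^{q/p}$. Since $E_\lambda(v)=\int_\Omega av^q>0$, dividing gives $E_\lambda(v)\leq E_\lambda(u)$; reversing the roles of $u$ and $v$ (permissible because $v>0$ in $\Omega_a^+$ as well) yields the opposite inequality, so equality propagates through every intermediate step. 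Equality in H\"older then forces $v^p\equiv\beta u^p$ a.e.\ for some $\beta>0$, and by continuity $v\equiv\beta^{1/p}u$ on $\Omega$; Lemma \ref{la}(4) (applicable since $\int_\Omega au^q\neq 0$) concludes $v\equiv u$.

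The main obstacle I foresee is the $\epsilon\to 0^+$ passage: by Remark \ref{r0}(2) the ground state $u=U_+$ may have a dead core, so $v^q/u^{q-1}$ is not directly admissible as a test function, and one must justify the convergence of each term carefully while exploiting both the $C^{1,\alpha}$ regularity of $u$ (to control $\nabla u$ on $\{u=0\}$) and the structural fact $\{u=0\}\subset\{a\leq 0\}$ to keep the missing portion of $\int_\Omega av^q$ harmlessly non-positive in the limit.
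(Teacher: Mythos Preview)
Your argument is correct and uses the same core toolkit as the paper (Picone's inequality, H\"older, and Lemma~\ref{in}), but the two proofs close the argument differently.

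The paper does not compare two solutions to each other. Instead it tests the equation satisfied by an arbitrary solution $u$ (positive in $\Omega_a^+$) with $\dfrac{V_+^q}{(u+\epsilon)^{q-1}}$, where $V_+$ is the unique nonnegative minimizer for $m_+$ furnished by Proposition~\ref{um}. After Picone, H\"older and Lemma~\ref{in} this yields the one-sided inequality
\[
\frac{E_\lambda(u)}{\bigl(\int_\Omega au^q\bigr)^{p/q}}\;\le\;m_+,
\]
and since the left-hand side equals $E_\lambda\bigl((\int_\Omega au^q)^{-1/q}u\bigr)$ with $(\int_\Omega au^q)^{-1/q}u\in\mathcal{S}_+$, the definition of $m_+$ gives the reverse inequality. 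Thus $(\int_\Omega au^q)^{-1/q}u$ is a minimizer for $m_+$, and Proposition~\ref{um} forces it to equal $V_+$; Corollary~\ref{un} and Lemma~\ref{la}(4) then give $u\equiv U_+$.

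Your route runs the same Picone--H\"older--Lemma~\ref{in} estimate with $v$ in place of $V_+$, obtains $E_\lambda(v)\le E_\lambda(u)$, and then swaps the roles of $u$ and $v$ to get equality throughout; proportionality is read off from the equality case in the $L^p$-H\"older step (legitimate since $-\lambda>0$), and Lemma~\ref{la}(4) finishes. The advantage of your approach is that it is self-contained: it bypasses Proposition~\ref{um} and Corollary~\ref{un} entirely, proving directly that any two solutions positive in $\Omega_a^+$ coincide. The paper's approach, on the other hand, extracts slightly more information along the way (namely that every such solution is a constrained minimizer for $m_+$), which ties the argument into the variational structure set up in Section~2. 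Your handling of the $\epsilon\to 0$ limit is correct and matches the paper's; the point that $\{u=0\}\subset\{a\le 0\}$ keeps the discarded piece non-positive is exactly what the paper uses.
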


\begin{proof}
Let $u$ be a solution of $(P_{\lambda})$ such that $u>0$ in $\Omega_{a}^{+}$,
and $\epsilon>0$. We take $\frac{V_{+}^{q}}{(u+\epsilon)^{q-1}}$ as test
function in $(P_{\lambda})$, so that
\[
\int_{\Omega}a\left(  \frac{u}{u+\epsilon}\right)  ^{q-1}V_{+}^{q}%
\ +\ \lambda\int_{\Omega}u^{p-1}\frac{V_{+}^{q}}{(u+\epsilon)^{q-1}}%
=\int_{\Omega}|\nabla u|^{p-2}\nabla u\nabla\left(  \frac{V_{+}^{q}%
}{(u+\epsilon)^{q-1}}\right)  .
\]
By Lemma \ref{lp} (with $\sigma=q$ and $u+\epsilon$ instead of $u$) we have
\[
\int_{\Omega}a\left(  \frac{u}{u+\epsilon}\right)  ^{q-1}V_{+}^{q}+\lambda
\int_{\Omega}u^{p-1}\frac{V_{+}^{q}}{(u+\epsilon)^{q-1}}\leq\int_{\Omega
}|\nabla u|^{p-q}|\nabla V_{+}|^{q}.
\]
Note that $\frac{u}{u+\epsilon}\rightarrow\chi_{\Omega_{u}^{+}}$ as
$\epsilon\rightarrow0$, where $\chi$ denotes the
characteristic function. Thus, by
Lebesgue's dominated convergence theorem, we find that
\[
\int_{\Omega_{u}^{+}}aV_{+}^{q}+\lambda\int_{\Omega}u^{p-q}V_{+}^{q}\leq
\int_{\Omega}|\nabla u|^{p-q}|\nabla V_{+}|^{q}.
\]
Now, by Holder's inequality we find that
\[
\int_{\Omega}|\nabla u|^{p-q}|\nabla V_{+}|^{q}\leq\left(  \int_{\Omega
}|\nabla u|^{p}\right)  ^{\frac{p-q}{p}}\left(  \int_{\Omega}|\nabla
V_{+}|^{p}\right)  ^{\frac{q}{p}}%
\]
and
\[
\int_{\Omega}u^{p-q}V_{+}^{q}\leq\left(  \int_{\Omega}u^{p}\right)
^{\frac{p-q}{p}}\left(  \int_{\Omega}V_{+}^{p}\right)  ^{\frac{q}{p}},
\]
so that
\[
\int_{\Omega_{u}^{+}}aV_{+}^{q}\leq\left(  \int_{\Omega}|\nabla u|^{p}\right)
^{\frac{p-q}{p}}\left(  \int_{\Omega}|\nabla V_{+}|^{p}\right)  ^{\frac{q}{p}%
}-\lambda\left(  \int_{\Omega}u^{p}\right)  ^{\frac{p-q}{p}}\left(
\int_{\Omega}V_{+}^{p}\right)  ^{\frac{q}{p}}.
\]
In addition, since $u>0$ in $\Omega_{a}^{+}$, we have $a\leq0$ in
$\Omega\setminus\Omega_{u}^{+}$, which implies that
\[
\int_{\Omega_{u}^{+}}aV_{+}^{q}=1-\int_{\Omega\setminus\Omega_{u}^{+}}%
aV_{+}^{q}\geq1,
\]
and therefore
\[
1\leq\left(  \int_{\Omega}|\nabla u|^{p}\right)  ^{\frac{p-q}{p}}\left(
\int_{\Omega}|\nabla V_{+}|^{p}\right)  ^{\frac{q}{p}}+\left(  -\lambda
\int_{\Omega}u^{p}\right)  ^{\frac{p-q}{p}}\left(  -\lambda\int_{\Omega}%
V_{+}^{p}\right)  ^{\frac{q}{p}}.
\]
From Lemma \ref{in}, it follows that
\[
1\leq E_{\lambda}(u)^{\frac{p-q}{p}}E_{\lambda}(V_{+})^{\frac{q}{p}}%
=m_{+}^{\frac{q}{p}}E_{\lambda}(u)^{\frac{p-q}{p}}, \quad \mbox{i.e.} \quad E_\lambda(u)^{1-\frac{q}{p}}\leq m.\]
Now, since $u$ solves $(P_{\lambda})$, we have $E_{\lambda}(u)=\int_{\Omega
}au^{q}$, so the latter inequality yields
\[
E_{\lambda}\left(  \left(  \int_{\Omega}au^{q}\right)  ^{-\frac{1}{q}%
}u\right)  =\frac{E_{\lambda}(u)}{\left(  \int_{\Omega}au^{q}\right)
^{\frac{p}{q}}}\leq m_{+},
\]
and by Proposition \ref{um}, we must have $\left(  \int_{\Omega}au^{q}\right)
^{-\frac{1}{q}}u\equiv V_{+}$. By Corollary \ref{un}, we deduce that $u$ and
$U_{+}$ are multiple of each other. Since both solve $(P_{\lambda})$ and
$\int_{\Omega}au^{q}>0$, Lemma \ref{la}(4) says that $u\equiv U_{+}$.
\end{proof}

In the next result we shall use the weak sub-supersolutions method. If
$\mathbf{B}u=u$ then we say that $0\leq u\in W^{1,p}\left(  \Omega\right)  $
is a \textit{supersolution }of $(P_{\lambda})$ whenever
\begin{equation}
\int_{\Omega}|\nabla u|^{p-2}\nabla u\nabla\phi\geq\int_{\Omega}\left(
\lambda u^{p-1}+au^{q-1}\right)  \phi\quad\mbox{for all } 0\leq\phi\in
W_{0}^{1,p}\left(  \Omega\right)  , \label{def}%
\end{equation}
and we say that $0\leq u\in W_{0}^{1,p}\left(  \Omega\right)  $ is a
\textit{subsolution} of $(P_{\lambda})$ whenever
\begin{equation}
\int_{\Omega}|\nabla u|^{p-2}\nabla u\nabla\phi\leq\int_{\Omega}\left(
\lambda u^{p-1}+au^{q-1}\right)  \quad\mbox{for all } 0\leq\phi\in W_{0}%
^{1,p}\left(  \Omega\right)  . \label{def2}%
\end{equation}
Note that we impose the subsolution to lie in $W_{0}^{1,p}\left(
\Omega\right)  $ since we are seeking for nonnegative solutions of $\left(
P_{\lambda}\right)  $. Similarly, if $\mathbf{B}u=\partial_{\nu}u$ then $0\leq
u\in W^{1,p}\left(  \Omega\right)  $ is a \textit{supersolution }of
$(P_{\lambda})$ whenever \eqref{def} holds for all $0\leq\phi\in
W^{1,p}\left(  \Omega\right)  $, and $0\leq u\in W^{1,p}\left(  \Omega\right)
$ is a \textit{subsolution }of $(P_{\lambda})$ if \eqref{def2} holds for all
$0\leq\phi\in W^{1,p}\left(  \Omega\right)  $.

For $f, g\in C(\overline{\Omega})$, we write $f<g$ if $f\leq g$ and
$f\not \equiv g$ in $\overline{\Omega}$.

\begin{proposition} 
\label{pin} Let $U_{+}(\lambda)$ be the unique nonnegative global minimizer of $I_\lambda$ for $\lambda<0$. Then:
\begin{enumerate}
	\item $U_{+}(\lambda)<U_{+}(\lambda^{\prime})$ if $\lambda<\lambda^{\prime}<0$.
	\item $U_+(\lambda) \to U_+(\lambda_0)$ in $X$ if $\lambda \to \lambda_0<0$. 
	\item $U_{+}(\lambda)\rightarrow0$ in $X$ as
	$\lambda\rightarrow-\infty$.
\end{enumerate}
\end{proposition}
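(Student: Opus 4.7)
I would prove the three claims in order, since each subsequent part builds on the previous.

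For \textit{(1)}, fix $\lambda<\lambda'<0$ and observe that $U_+(\lambda')$ is a supersolution of $(P_\lambda)$, since
\[
-\Delta_p U_+(\lambda')-\lambda U_+(\lambda')^{p-1}-aU_+(\lambda')^{q-1}=(\lambda'-\lambda)U_+(\lambda')^{p-1}\geq 0.
\]
The plan is to minimize $I_\lambda$ over the weakly closed, convex, $L^\infty$-bounded order interval $K:=\{u\in X:0\leq u\leq U_+(\lambda')\}$. Since $\lambda<0$ and $K$ is $L^\infty$-bounded, $I_\lambda$ is bounded below and weakly lower semicontinuous on $K$, yielding a minimizer $\tilde u$. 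A standard sub-supersolution truncation argument (perturb by $\epsilon\phi$ and project back into $K$) shows $\tilde u$ weakly solves $(P_\lambda)$; testing against $tU_+(\lambda')\in K\cap\mathcal{A}_+$ for small $t>0$ yields $I_\lambda(\tilde u)\leq I_\lambda(tU_+(\lambda'))<0$, so $\tilde u\in\mathcal{A}_+$. The crucial step is showing $\tilde u>0$ in $\Omega_a^+$: otherwise, Vazquez's strong maximum principle applied to $-\Delta_p\tilde u+|\lambda|\tilde u^{p-1}=a\tilde u^{q-1}\geq 0$ on $\Omega_a^+$ forces $\tilde u\equiv 0$ on some component $V$ of $\Omega_a^+$; since $U_+(\lambda')\geq c>0$ on any ball $B\Subset V$ by Proposition \ref{us}, for $0\leq\phi\in C_0^\infty(B)$ with $\phi\not\equiv 0$ and $\epsilon>0$ small one has $\tilde u+\epsilon\phi\in K$ and
\[
I_\lambda(\tilde u+\epsilon\phi)-I_\lambda(\tilde u)=O(\epsilon^p)-\frac{\epsilon^q}{q}\int_B a\phi^q<0,
\]
contradicting minimality (since $q<p$ and $\int_B a\phi^q>0$). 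Proposition \ref{us} then yields $\tilde u=U_+(\lambda)$, so $U_+(\lambda)\leq U_+(\lambda')$. Finally $U_+(\lambda)\not\equiv U_+(\lambda')$, for otherwise subtracting the two equations would give $(\lambda-\lambda')U_+(\lambda)^{p-1}\equiv 0$, contradicting $U_+(\lambda)\in\mathcal{A}_+$.

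For \textit{(2)}, let $\lambda_n\to\lambda_0<0$ and set $u_n:=U_+(\lambda_n)$. Pick $\lambda_-<\lambda_0<\lambda_+<0$ with $\lambda_-<\lambda_n<\lambda_+$ for $n$ large; by \textit{(1)}, $U_+(\lambda_-)\leq u_n\leq U_+(\lambda_+)$ pointwise, giving an $L^\infty$ bound. Combined with $E_{\lambda_n}(u_n)=\int_\Omega au_n^q$ (from testing the equation), Lemma \ref{la}(2) yields boundedness of $\{u_n\}$ in $X$. I would then extract $u_n\rightharpoonup u_0$ in $X$ and pass to the limit in the weak formulation, using Lemma \ref{la}(1) and $L^p$-strong convergence, to conclude that $u_0\geq 0$ solves $(P_{\lambda_0})$. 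The a.e.\ bound $u_0\geq U_+(\lambda_-)>0$ on $\Omega_a^+$ (via Proposition \ref{us} applied at $\lambda_-$) forces $u_0=U_+(\lambda_0)$ by Proposition \ref{us}. For strong convergence I would pass to the limit in $\|\nabla u_n\|_p^p-\lambda_n\|u_n\|_p^p=\int_\Omega au_n^q$ using Lemma \ref{la}(1), obtaining $\|\nabla u_n\|_p\to\|\nabla u_0\|_p$; weak convergence together with norm convergence in the uniformly convex space $L^p$ then yields $\nabla u_n\to\nabla u_0$ strongly, i.e., $u_n\to u_0$ in $X$. A standard subsequence argument upgrades this to convergence of the full sequence.

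For \textit{(3)}, by \textit{(1)} the family $\{U_+(\lambda)\}_{\lambda\leq -1}$ is pointwise bounded above by $U_+(-1)$, hence $L^\infty$-bounded. Testing $(P_\lambda)$ with $U_+=U_+(\lambda)$ itself gives
\[
\|\nabla U_+\|_p^p+|\lambda|\|U_+\|_p^p=\int_\Omega aU_+^q\leq\|a\|_\infty|\Omega|^{(p-q)/p}\|U_+\|_p^q
\]
by H\"older's inequality. Dropping the nonnegative gradient term yields $\|U_+\|_p\leq C|\lambda|^{-1/(p-q)}\to 0$, and substituting back gives $\|\nabla U_+\|_p^p\leq C\|U_+\|_p^q\to 0$, so $U_+(\lambda)\to 0$ in $X$ in both the Dirichlet and Neumann settings. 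The main technical obstacle throughout is \textit{(1)}: establishing that the order-interval minimizer $\tilde u$ is strictly positive on $\Omega_a^+$, since without this Proposition \ref{us} cannot be invoked to pin down $\tilde u=U_+(\lambda)$, and the remainder of the argument unravels.
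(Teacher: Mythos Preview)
Your proof is correct, and parts \textit{(2)} and \textit{(3)} run essentially as in the paper (the paper uses the $(S_+)$ property of $-\Delta_p$ by testing with $U_n-U_0$ rather than norm convergence in a uniformly convex space, but this is a matter of taste). In \textit{(3)} you even extract a quantitative rate $\|U_+(\lambda)\|_p = O(|\lambda|^{-1/(p-q)})$ which the paper does not state.

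Where you genuinely differ from the paper is in part \textit{(1)}, and it is worth comparing. You view $U_+(\lambda')$ as a \emph{supersolution} of $(P_\lambda)$ and minimize $I_\lambda$ over $[0,U_+(\lambda')]$; this forces you to verify afterwards that the order-interval minimizer $\tilde u$ is positive on $\Omega_a^+$, which you correctly identify as the main technical obstacle and handle via a local perturbation argument. The paper does the mirror image: it views $U_+(\lambda)$ as a (strict) \emph{subsolution} of $(P_{\lambda'})$, pairs it with a large constant supersolution (admissible since $\lambda'<0$), and obtains a solution $u$ of $(P_{\lambda'})$ with $u\geq U_+(\lambda)$. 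The payoff of the paper's orientation is that positivity on $\Omega_a^+$ comes for free from $u\geq U_+(\lambda)>0$ there, so Proposition~\ref{us} applies immediately and the whole perturbation step you carry out becomes unnecessary. Your route is a little longer but self-contained and perfectly valid; the paper's route is shorter precisely because it builds the positivity into the sub-supersolution ordering rather than recovering it afterwards.
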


\begin{proof}\strut
\begin{enumerate}
\item Let $\lambda<\lambda^{\prime}<0$. Then $U_{+}(\lambda)$ is a (strict) weak
subsolution of $(P_{\lambda^{\prime}})$. On the other hand, since
$\lambda^{\prime}<0$, any sufficiently large constant is a weak supersolution
larger than $U_{+}(\lambda)$. By the weak sub-supersolutions method, we find a
solution $u$ of $(P_{\lambda^{\prime}})$ with $u\geq U_{+}(\lambda)$. In
particular $u>0$ in $\Omega_{a}^{+}$ and, by Proposition \ref{us}, we find
that $u\equiv U_{+}(\lambda^{\prime})$, which yields the conclusion.
\item Let $\lambda_n \to \lambda_0<0$. Since  the sequence $U_n:=U_+(\lambda_n)$ stays
bounded in $C(\overline{\Omega})$, it is bounded in $X$, and up to a subsequence, we have $U_n \rightharpoonup U_0$ in $X$, for some $U_0\geq 0$. Taking $U_n-U_0$ as test function in $(P_{\lambda_n})$ we find that $U_n \to U_0$ in $X$. As $\{U_n\}$ is positive and bounded away from zero in $\Omega_a^+$, we have $U_0>0$ in $\Omega_a^+$. Finally, since $U_0$ solves $(P_{\lambda_0})$ and $\lambda_0<0$, by uniqueness we must have $U_0\equiv U_+(\lambda_0)$.
\item Arguing as in the previous item, we know that $U_{\lambda}=U_{+}(\lambda)$ stays
bounded in $C(\overline{\Omega})$, and thus in $X$, as $\lambda\rightarrow
-\infty$, so
\[
\int_{\Omega}U_{\lambda}^{p}=\frac{1}{\lambda}\left(  \int_{\Omega}|\nabla
U_{\lambda}|^{p}-\int_{\Omega}aU_{\lambda}^{q}\right)  \rightarrow0
\]
as $\lambda\rightarrow-\infty$. Hence $U_{\lambda}\rightarrow0$ in
$L^{p}(\Omega)$. Finally, since $\int_{\Omega}|\nabla U_{\lambda}|^{p}\leq
\int_{\Omega}aU_{\lambda}^{q}$ we have $U_{\lambda}\rightarrow0$ in $X$.
\end{enumerate}	
\end{proof}

\begin{remark}\label{rin}
In the Dirichlet case $(P_0)$ has a unique ground state solution, and this one is the only solution of $(P_0)$ positive in $\Omega_a^+$, cf. \cite[Theorem 1.1]{KRQUpp}. So the increasingness and continuity of $U_+(\lambda)$ hold in $(-\infty,0]$. The same conclusion applies to the Neumann problem if we assume $\int_\Omega a<0$.
\end{remark}

\begin{proposition}
\label{puu}\strut

\begin{enumerate}
\item Any nontrivial weak subsolution of $(P_{\lambda})$ belongs to
$\mathcal{A}_{+}$ for $\lambda\leq\lambda_{1}$.

\item If $\Omega_{a}^{+}$ is connected then $U_{+}$ is the unique nontrivial
solution of $(P_{\lambda})$ for $\lambda<0$. The same conclusion holds for
$\lambda=0$ if $\int_{\Omega}a<0$ when $X=W^{1,p}(\Omega)$.
\end{enumerate}
\end{proposition}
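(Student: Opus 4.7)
The plan is to prove Part (1) by testing the weak subsolution inequality against $u$ itself together with the variational characterization of $\lambda_{1}$, and then derive Part (2) by combining Part (1) with V\'azquez's strong maximum principle on $\Omega_{a}^{+}$ and Proposition~\ref{us}.

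For Part (1), I would take a nontrivial subsolution $0\leq u\in X$ of $(P_{\lambda})$; since $u$ itself is an admissible nonnegative test function, choosing $\phi=u$ yields
$$E_{\lambda}(u)\leq\int_{\Omega}au^{q}.$$
For $\lambda\leq\lambda_{1}$ the definition of $\lambda_{1}$ gives $E_{\lambda}(u)\geq 0$, and equality forces $\lambda=\lambda_{1}$ together with $u=c\phi_{1}$ for some $c>0$. I would rule out this degenerate case by substituting $u=c\phi_{1}$ back into the subsolution inequality: since $c\phi_{1}$ solves $-\Delta_{p}(c\phi_{1})=\lambda_{1}(c\phi_{1})^{p-1}$, the gradient and linear terms cancel, leaving $c^{q-1}\int_{\Omega}a\phi_{1}^{q-1}\phi\geq 0$ for every admissible $\phi\geq 0$. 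Since $\phi_{1}>0$ in $\Omega$ in the Dirichlet case and on $\overline{\Omega}$ in the Neumann case, this forces $a\geq 0$ a.e., contradicting the sign change of $a$. Hence $E_{\lambda}(u)>0$, so $\int_{\Omega}au^{q}>0$ and $u\in\mathcal{A}_{+}$.

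For Part (2) with $\lambda<0$, I would take any nontrivial solution $u$ of $(P_{\lambda})$; Part (1) places $u$ in $\mathcal{A}_{+}$. On the connected open set $\Omega_{a}^{+}$ we have $au^{q-1}\geq 0$, so rewriting the equation as $-\Delta_{p}u+(-\lambda)u^{p-1}\geq 0$ with positive zero-order coefficient allows V\'azquez's strong maximum principle to yield the dichotomy: either $u\equiv 0$ or $u>0$ on $\Omega_{a}^{+}$. The first alternative would imply $\int_{\Omega}au^{q}=\int_{\Omega_{a}^{-}}au^{q}\leq 0$, contradicting $u\in\mathcal{A}_{+}$. Hence $u>0$ on $\Omega_{a}^{+}$, and Proposition~\ref{us} forces $u\equiv U_{+}$. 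For the Neumann case with $\lambda=0$ and $\int_{\Omega}a<0$ (so that $\lambda_{1}=0<\lambda^{\ast}$), Part (1) still applies at the endpoint $\lambda=\lambda_{1}$, and the same V\'azquez argument applied to $-\Delta_{p}u=au^{q-1}\geq 0$ on $\Omega_{a}^{+}$ gives $u>0$ there; I would then invoke the $\lambda=0$ analogue of Proposition~\ref{us} recorded in Remark~\ref{rin} (from \cite[Theorem 1.1]{KRQUpp}) to conclude $u\equiv U_{+}$.

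The hard part is the degenerate endpoint $\lambda=\lambda_{1}$ in Part (1): the test $\phi=u$ only yields $E_{\lambda}(u)\geq 0$ with possible equality, and one must exploit the sign change of $a$ by plugging $u=c\phi_{1}$ back into the subsolution inequality to derive the contradiction. Once this is secured, Part (2) becomes a clean packaging of V\'azquez's strong maximum principle on the connected set $\Omega_{a}^{+}$ with the uniqueness among solutions positive on $\Omega_{a}^{+}$ supplied by Proposition~\ref{us} (extended to $\lambda=0$ via Remark~\ref{rin}).
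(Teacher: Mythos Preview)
Your proposal is correct and follows essentially the same route as the paper. For Part (1) you test with $\phi=u$ to obtain $0\leq E_{\lambda}(u)\leq\int_{\Omega}au^{q}$ and rule out the borderline case $u=c\phi_{1}$ by substituting into the subsolution inequality, which is exactly the argument the paper compresses into the phrase ``not possible since such eigenfunctions are positive in $\Omega$ and $a^{-}\not\equiv 0$''; for Part (2) both you and the paper combine Part (1), V\'azquez's strong maximum principle on the connected set $\Omega_{a}^{+}$, and Proposition~\ref{us} (respectively its $\lambda=0$ analogue from \cite{KRQUpp}, which the paper cites as \cite[Proposition 2.7]{KRQUpp} and you access through Remark~\ref{rin}).
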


\begin{proof}\strut
	\begin{enumerate}
		\item We use the relation $0\leq E_{\lambda}%
		(u)\leq\int_{\Omega}au^{q}$, which holds for any weak subsolution with $\lambda
		\leq\lambda_{1}$. Moreover, $E_\lambda(u)=0$ only if $\lambda=\lambda_1$ and $u$ is an eigenfunction associated to $\lambda_1$, which is not possible since such eigenfunctions are positive in $\Omega$ and $a^- \not \equiv 0$.
		
	\item If $\Omega_{a}^{+}$ is connected and $u\not \equiv 0$
	solves $(P_{\lambda})$ with $\lambda\leq 0$ then, since $u\in\mathcal{A}_{+}$,
	we have $u\not \equiv 0$ in $\Omega_{a}^{+}$. By the strong maximum principle \cite{Va},
	we see that $u>0$ in $\Omega_{a}^{+}$. If $\lambda<0$ then Proposition \ref{us} yields the
	conclusion. For $\lambda=0$ we make use of \cite[Proposition 2.7]{KRQUpp}.
	\end{enumerate}
\end{proof}

\section{Asymptotics and positivity results}

Under the condition $\int_{\Omega}a\phi_{1}^{q} > 0$ we show that
$U_{+}(\lambda)$ blows up and we provide an asymptotic expression for it as
$\lambda\to\lambda_{1}^{-}$. A similar result holds for $U_{-}(\lambda)$. As a
consequence we can deduce their positivity for $\lambda$ close to $\lambda
_{1}$.

\begin{theorem}
\label{t3} \strut

\begin{enumerate}
\item If $\int_{\Omega}a\phi_{1}^{q}>0$ then $U_{+}(\lambda)\sim m_{+}
(\lambda)^{-\frac{1}{p-q}}\left(  \int_{\Omega}a\phi_{1}^{q}\right)
^{-\frac{1}{q}}\phi_{1}$ in $C^{1}(\overline{\Omega})$ as $\lambda
\rightarrow\lambda_{1}^{-}$, i.e.
\[
m_{+}(\lambda)^{\frac{1}{p-q}}U_{+}(\lambda)\rightarrow\left(  \int_{\Omega
}a\phi_{1}^{q}\right)  ^{-\frac{1}{q}}\phi_{1}\quad\mbox{in }C^{1}
(\overline{\Omega})\quad\mbox{as }\lambda\rightarrow\lambda_{1}^{-}.
\]
Moreover $m_{+}$ is continuous with respect to $\lambda$ and $m_{+}
(\lambda_{1} )=0$, so that $\min_{K} U_{+}(\lambda) \to\infty$ for any compact
$K \subset\Omega$. In particular, $U_{+}(\lambda) \in\mathcal{P}^{\circ}$ for
$\lambda$ close enough to (and smaller than) $\lambda_{1}$.

\item If $\int_{\Omega}a\phi_{1}^{q}<0$ then $U_{-}(\lambda)\sim
(-m_{-}(\lambda))^{-\frac{1}{p-q}}\left(  -\int_{\Omega}a\phi_{1}^{q}\right)
^{-\frac{1}{q}}\phi_{1}$ in $C^{1}(\overline{\Omega})$ as $\lambda
\rightarrow\lambda_{1}^{+}$, i.e.
\[
(-m_{-}(\lambda))^{\frac{1}{p-q}}U_{-}(\lambda)\rightarrow\left(
-\int_{\Omega}a\phi_{1}^{q}\right)  ^{-\frac{1}{q}}\phi_{1}\quad
\mbox{in }C^{1}(\overline{\Omega})\quad\mbox{as }\lambda\rightarrow\lambda
_{1}^{+},
\]
and similar statements as in (1) hold for $U_{-}$.
\end{enumerate}
\end{theorem}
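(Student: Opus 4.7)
The plan is to reduce the asymptotic analysis to the constrained minimizers $V_\pm(\lambda)\in\mathcal{S}_\pm$. By Corollary~\ref{un} and Lemma~\ref{l1}(2), $U_+(\lambda)\equiv m_+(\lambda)^{-1/(p-q)}V_+(\lambda)$ and $U_-(\lambda)\equiv(-m_-(\lambda))^{-1/(p-q)}V_-(\lambda)$, so everything hinges on describing the behavior of $V_\pm(\lambda)$ and $m_\pm(\lambda)$ as $\lambda\to\lambda_1$. I focus on part (1); part (2) runs by the same blueprint with appropriate sign changes.

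First I would establish the decay $m_+(\lambda)\to 0^+$ as $\lambda\to\lambda_1^-$. When $\int_\Omega a\phi_1^q>0$, property (2) of the introduction gives $\lambda^\ast=\lambda_1$, so Lemma~\ref{l1}(2a) yields $m_+(\lambda)>0$ for $\lambda<\lambda_1$. Testing with $V=(\int_\Omega a\phi_1^q)^{-1/q}\phi_1\in\mathcal{S}_+$ provides the upper bound
$$0<m_+(\lambda)\leq E_\lambda(V)=\frac{\lambda_1-\lambda}{\bigl(\int_\Omega a\phi_1^q\bigr)^{p/q}}\longrightarrow 0^+.$$
Next, pick any $\lambda_n\uparrow\lambda_1$ and let $V_n:=V_+(\lambda_n)$. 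Since $\int_\Omega aV_n^q=1$ is bounded and $E_{\lambda_n}(V_n)=m_+(\lambda_n)\to 0$, Lemma~\ref{la}(2) (precisely its clause covering $\lambda=\lambda^\ast$ with $\int a\phi_1^q>0$) gives boundedness of $\{V_n\}$ in $X$. Extracting a weakly convergent subsequence $V_n\rightharpoonup V_0$, weak continuity (Lemma~\ref{la}(1)) forces $\int_\Omega aV_0^q=1$, so $V_0\not\equiv 0$. Weak lower semicontinuity gives $E_{\lambda_1}(V_0)\leq 0$, while the variational characterization of $\lambda_1$ gives the reverse inequality, so $E_{\lambda_1}(V_0)=0$. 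The nonnegative, nontrivial $V_0$ therefore realizes the Rayleigh minimum, hence $V_0=c\phi_1$, and the constraint $\int_\Omega aV_0^q=1$ pins down $c=(\int_\Omega a\phi_1^q)^{-1/q}$.

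To upgrade convergence from weak to $C^1(\overline{\Omega})$, note that $E_{\lambda_n}(V_n)\to 0=E_{\lambda_1}(V_0)$ together with $\|V_n\|_p\to\|V_0\|_p$ (Lemma~\ref{la}(1)) yield $\|\nabla V_n\|_p\to\|\nabla V_0\|_p$, and uniform convexity of $L^p$ upgrades to strong convergence in $X$. Since $V_n$ solves
$$-\Delta_p V_n=\lambda_n V_n^{p-1}+m_+(\lambda_n)\,a\,V_n^{q-1}$$
with uniformly bounded coefficients, Moser iteration provides uniform $L^\infty$ bounds, and the quasilinear $C^{1,\alpha}$ regularity theory of \cite{db,L} delivers uniform $C^{1,\alpha}(\overline{\Omega})$ bounds; Arzela-Ascoli plus uniqueness of $V_0$ then gives $V_+(\lambda)\to(\int_\Omega a\phi_1^q)^{-1/q}\phi_1$ in $C^1(\overline{\Omega})$ along the full net. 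Continuity of $m_+$ at $\lambda_1$ drops out from $m_+(\lambda_n)=E_{\lambda_n}(V_n)\to E_{\lambda_1}(V_0)=0$, and the same scheme yields continuity on all of $(-\infty,\lambda_1]$. Multiplying by $m_+(\lambda)^{-1/(p-q)}$ produces the claimed asymptotic for $U_+$; since $\phi_1\in\mathcal{P}^\circ$, $C^1$ closeness places $V_+(\lambda)$, and hence $U_+(\lambda)$, in $\mathcal{P}^\circ$ for $\lambda$ near $\lambda_1$, while $\min_K U_+(\lambda)\to\infty$ on compacts $K\Subset\Omega$ follows from $\inf_K\phi_1>0$ and $m_+(\lambda)^{-1/(p-q)}\to\infty$.

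For part (2), note that $\bigl(-\int_\Omega a\phi_1^q\bigr)^{-1/q}\phi_1\in\mathcal{S}_-$ is a zero-energy test function at $\lambda=\lambda_1$, and the squeeze $(\lambda_1-\lambda)C_1\leq m_-(\lambda)\leq (\lambda_1-\lambda)/(-\int_\Omega a\phi_1^q)^{p/q}<0$ on $(\lambda_1,\lambda^\ast)$ — the lower bound coming from $\int|\nabla V_-|^p\geq\lambda_1\int V_-^p$ combined with the $X$-boundedness of $\{V_-(\lambda_n)\}$ supplied by Lemma~\ref{la}(2) — yields $m_-(\lambda)\to 0^-$. The weak limit of $V_-(\lambda_n)$ is pinned down via $\int_\Omega a V_0^q=-1$ as $(-\int_\Omega a\phi_1^q)^{-1/q}\phi_1$, and the same regularity bootstrap delivers $C^1$ convergence. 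The main obstacle I anticipate throughout is the uniform $C^{1,\alpha}$ bound for $V_\pm(\lambda_n)$: once in hand the rest is routine compactness and lower semicontinuity. A secondary subtlety is preventing the weak limit from vanishing, which is precisely where the weak continuity of $u\mapsto\int_\Omega a|u|^q$ on the constraint $\int a|u|^q=\pm 1$ is essential.
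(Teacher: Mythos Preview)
Your proof is correct and follows essentially the same route as the paper: reduce to the constrained minimizers $V_\pm$ via the relation $U_\pm=(\pm m_\pm)^{-1/(p-q)}V_\pm$, use Lemma~\ref{la}(2) for boundedness, identify the weak limit as a multiple of $\phi_1$ through $E_{\lambda_1}(V_0)=0$ and the constraint, and then bootstrap to $C^1$ convergence via elliptic regularity. The only noteworthy difference is cosmetic: the paper obtains continuity of $m_+$ in one line by observing that $\lambda\mapsto m_+(\lambda)$ is a concave function (being an infimum of affine maps), whereas you argue it directly from the test-function bound $0<m_+(\lambda)\le(\lambda_1-\lambda)(\int_\Omega a\phi_1^q)^{-p/q}$.
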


Theorem \ref{t3} is a consequence of Corollary \ref{un} and the following result:

\begin{proposition}
\strut

\begin{enumerate}
\item If $\int_{\Omega}a\phi_{1}^{q}>0$ then $V_{+}(\lambda)\rightarrow\left(
\int_{\Omega}a\phi_{1}^{q}\right)  ^{-\frac{1}{q}}\phi_{1}$ in $C^{1}%
(\overline{\Omega})$ as $\lambda\rightarrow\lambda_{1}^{-}$.

\item If $\int_{\Omega}a\phi_{1}^{q}<0$ then $V_{-}(\lambda)\rightarrow\left(
-\int_{\Omega}a\phi_{1}^{q}\right)  ^{-\frac{1}{q}}\phi_{1}$ in $C^{1}%
(\overline{\Omega})$ as $\lambda\rightarrow\lambda_{1}^{+}$.
\end{enumerate}
\end{proposition}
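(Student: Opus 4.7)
The plan is to argue along an arbitrary sequence $\lambda_{n}\to\lambda_{1}^{\mp}$: extract a weak limit of $\{V_{\pm}(\lambda_{n})\}$ in $X$, identify it via the variational characterization of $\lambda_{1}$, upgrade to strong convergence in $X$ using the value of $m_{\pm}(\lambda_{n})$, and bootstrap to $C^{1}(\overline{\Omega})$ via standard quasilinear regularity; since the limit will be unique, the full family converges. The first ingredient is $m_{\pm}(\lambda)\to 0$. Testing $m_{+}$ with the competitor $\bigl(\int_{\Omega}a\phi_{1}^{q}\bigr)^{-1/q}\phi_{1}\in\mathcal{S}_{+}$ gives
\[
0<m_{+}(\lambda)\leq\left(\int_{\Omega}a\phi_{1}^{q}\right)^{-\frac{p}{q}}(\lambda_{1}-\lambda)\Vert\phi_{1}\Vert_{p}^{p}\longrightarrow 0\quad\text{as }\lambda\to\lambda_{1}^{-},
\]
where positivity uses Lemma \ref{la}(3) together with $\lambda^{\ast}=\lambda_{1}$ in this case; an analogous estimate with $\bigl(-\int_{\Omega}a\phi_{1}^{q}\bigr)^{-1/q}\phi_{1}\in\mathcal{S}_{-}$ yields $m_{-}(\lambda)\to 0^{-}$ as $\lambda\to\lambda_{1}^{+}$.

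By Lemma \ref{la}(2), which applies in both cases (in the $V_{-}$ setting we have $\lambda_{1}<\lambda^{\ast}$ because $\int_{\Omega}a\phi_{1}^{q}<0$), $\{V_{\pm}(\lambda_{n})\}$ is bounded in $X$, so a subsequence satisfies $V_{\pm}(\lambda_{n})\rightharpoonup V_{0}\geq 0$ in $X$. Lemma \ref{la}(1) gives $\int_{\Omega}aV_{0}^{q}=\pm 1$, and weak lower semicontinuity, combined with $m_{\pm}(\lambda_{n})\to 0$ and $\lambda_{n}\to\lambda_{1}$, yields $E_{\lambda_{1}}(V_{0})\leq 0$. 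Since $\lambda_{1}$ is the minimum of the Rayleigh quotient and $V_{0}\geq 0$ is nontrivial, $V_{0}$ must be a nonnegative $\lambda_{1}$-eigenfunction; hence $V_{0}=c\phi_{1}$, with $c$ uniquely determined by the normalization $\int_{\Omega}aV_{0}^{q}=\pm 1$. This is exactly the constant claimed in the statement.

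For strong convergence in $X$, I would combine $E_{\lambda_{n}}(V_{\pm}(\lambda_{n}))\to 0=E_{\lambda_{1}}(V_{0})$ with the compact Sobolev embedding to get $\Vert\nabla V_{\pm}(\lambda_{n})\Vert_{p}\to\Vert\nabla V_{0}\Vert_{p}$, after which uniform convexity of $L^{p}$ turns weak into strong convergence of gradients. Recalling from the Lagrange-multiplier step in the proof of Lemma \ref{l1} that
\[
-\Delta_{p}V_{\pm}=\lambda V_{\pm}^{p-1}\pm m_{\pm}(\lambda)\,aV_{\pm}^{q-1},
\]
a Moser iteration based on $q<p$ and the uniform $W^{1,p}$-bound produces a uniform $L^{\infty}(\overline{\Omega})$-bound, after which the $C^{1,\alpha}(\overline{\Omega})$ estimates of \cite{db,L} provide a uniform $C^{1,\alpha}$-bound. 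Arzelà-Ascoli then delivers $C^{1}(\overline{\Omega})$ convergence along a subsequence, and uniqueness of the limit $c\phi_{1}$ promotes this to convergence of the full family. The step I expect to be the main obstacle is exactly this passage from uniform $W^{1,p}$- to uniform $L^{\infty}$-bounds on $V_{\pm}(\lambda)$, as it is the only non-variational input; all the other ingredients are furnished by compactness and the variational characterization of $\phi_{1}$.
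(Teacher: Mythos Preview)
Your proposal is correct and follows essentially the same route as the paper: show $m_{\pm}(\lambda)\to 0$, apply Lemma~\ref{la}(2) for boundedness, identify the weak limit as a multiple of $\phi_{1}$ via $E_{\lambda_{1}}(V_{0})=0$, upgrade to strong $X$-convergence through norm convergence of gradients, and then invoke quasilinear regularity for $C^{1}(\overline{\Omega})$. The only cosmetic differences are that the paper obtains $m_{+}(\lambda)\to 0$ from concavity (hence continuity) of $\lambda\mapsto m_{+}(\lambda)$ rather than your explicit test-function estimate, and it compresses your Moser/$C^{1,\alpha}$/Arzel\`a--Ascoli chain into a single citation to standard regularity.
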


\begin{proof}
We shall prove only (1), since the proof of (2) is similar. Recall that
$\int_{\Omega}a\phi_{1}^{q}>0$ implies that $\lambda^{\ast}=\lambda_{1}$. Note
also that $\lambda\mapsto m_{+}(\lambda)$ is continuous (since it is a concave
map) and $m_{+}(\lambda_{1})=0$. Let $\lambda_{n}\rightarrow\lambda_{1}^{-}$
and $v_{n}:=V_{+}(\lambda_{n})$. Then $E_{\lambda_{n}}(v_{n})=m_{+}%
(\lambda_{n})\rightarrow0$. By Lemma \ref{la}(2) we know that $\{v_{n}\}$ is
bounded in $X$, and we may assume that $v_{n}\rightharpoonup v_{0}$ in $X$. We
find that $\int_{\Omega}av_{0}^{q}=1$ and
\[
E_{\lambda_{1}}(v_{0})\leq\lim E_{\lambda_{n}}(v_{n})=0\leq E_{\lambda_{1}%
}(v_{0}),
\]
i.e. $E_{\lambda_{n}}(v_{n})\rightarrow E_{\lambda_{1}}(v_{0})=0$. It follows
that $v_{n}\rightarrow v_{0}$ in $X$ and $v_{0}=c\phi_{1}$ for some $c>0$.
From $\int_{\Omega}av_{0}^{q}=1$ we infer that $c=\left(  \int_{\Omega}%
a\phi_{1}^{q}\right)  ^{-\frac{1}{q}}$. Finally, by standard regularity (e.g.
\cite[Lemma 2.3]{AR}), we obtain the convergence in $C^{1}(\overline{\Omega}%
)$.
\end{proof}

\begin{remark}
\label{ras} \strut

\begin{enumerate}
\item A `bifurcation from infinity' result as Theorem \ref{t3} has been
established in \cite{alama,B} for $p=2$. Moreover, in \cite{alama} the author
also deduces that the bifurcating solutions are positive. However, these works
do not provide the asymptotics of the bifurcating solutions. Note also that
such result does not hold in the indefinite superlinear case ($p=2$), under
some further restrictions on $a$ and $q$, in view of the \textit{a priori}
bounds established in \cite{ALG,BCN} for positive solutions of $(P_{\lambda})$.

\item Note that for the Neumann problem we have $\phi_{1}>0$ on $\overline
{\Omega}$, so $\displaystyle \min_{\overline{\Omega}}U_{\pm}(\lambda)
\to\infty$ as $\lambda\to 0^{\mp}$ if $\int_{\Omega}a \gtrless0$.
\end{enumerate}
\end{remark}

We prove now that nonnegative minimizers associated to $m_{\pm}$ are positive
when $q$ is close to $p$. To be more precise, they lie in $\mathcal{P}^{\circ
}$.

First we need the following result:

\begin{proposition}
\label{le} $\lambda^{*}(p)= \displaystyle \lim_{q \to p^{-}} \lambda^{*}(q)$
\end{proposition}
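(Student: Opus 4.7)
The plan is to prove both $\limsup_{q \to p^-}\lambda^{\ast}(q) \leq \lambda^{\ast}(p)$ and $\lambda^{\ast}(p) \leq \liminf_{q \to p^-} \lambda^{\ast}(q)$.

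For the upper bound, I would start from a near-minimizer $u$ for $\lambda^{\ast}(p)$ with $u \geq 0$, $\Vert u\Vert_p = 1$, $\int_\Omega au^p \geq 0$ and $\int_\Omega |\nabla u|^p < \lambda^{\ast}(p) + \epsilon$. If $\int_\Omega au^p > 0$, Lebesgue dominated convergence gives $\int_\Omega au^q \to \int_\Omega au^p > 0$ as $q \to p^-$, so the same $u$ is admissible for $\lambda^{\ast}(q)$ once $q$ is sufficiently close to $p$, yielding $\lambda^{\ast}(q) \leq \int_\Omega |\nabla u|^p$. The borderline case $\int_\Omega au^p = 0$ would be handled by a perturbation $u_s := u + s\psi$ with $\psi \in C_c^{\infty}(\Omega_a^+)$, $\psi \geq 0$, $\psi \not\equiv 0$. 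A short dichotomy based on $\int_\Omega au^{p-1}\psi$ (either strictly positive, so that $\frac{d}{ds}\int_\Omega au_s^p\big|_{s=0} > 0$; or zero, which forces $u \equiv 0$ on the support of $\psi$ so that $\int_\Omega au_s^p = s^p\int_\Omega a\psi^p > 0$) produces small $s > 0$ with $\int_\Omega au_s^p > 0$ and $\int_\Omega |\nabla u_s|^p$ arbitrarily close to $\int_\Omega |\nabla u|^p$; applying the previous case to $u_s/\Vert u_s\Vert_p$ then closes this direction.

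For the lower bound, let $q_n \to p^-$ and let $u_n$ achieve $\lambda^{\ast}(q_n)$, which is finite and uniformly bounded by the upper bound. Since $\Vert u_n\Vert_p = 1$ and $\int_\Omega |\nabla u_n|^p = \lambda^{\ast}(q_n)$ is bounded, $\{u_n\}$ stays bounded in $X$; passing to a subsequence, $u_n \rightharpoonup u_0$ in $X$ and $u_n \to u_0$ in $L^p$ and almost everywhere, so $\Vert u_0\Vert_p = 1$ and $\int_\Omega |\nabla u_0|^p \leq \liminf \lambda^{\ast}(q_n)$ by weak lower semicontinuity. The key point left is to verify $\int_\Omega a|u_0|^p \geq 0$. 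I would deduce this from the convergence $\int_\Omega a|u_n|^{q_n} \to \int_\Omega a|u_0|^p$, split as
\[
\int_\Omega a\bigl(|u_n|^{q_n} - |u_0|^{q_n}\bigr) + \int_\Omega a\bigl(|u_0|^{q_n} - |u_0|^p\bigr).
\]
The second piece vanishes by dominated convergence (with majorant $1 + |u_0|^p$), and the first by the elementary estimate $\bigl||\alpha|^r - |\beta|^r\bigr| \leq r(|\alpha|+|\beta|)^{r-1}|\alpha - \beta|$ combined with Holder's inequality and the convergence $u_n \to u_0$ in $L^t$ for every $t < p^{\ast}$. Once this is in hand, $u_0$ is admissible for $\lambda^{\ast}(p)$ and the inequality is immediate.

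The main obstacle I expect is the variable-exponent limit $\int_\Omega a|u_n|^{q_n} \to \int_\Omega a|u_0|^p$ in the lower bound, which requires a Holder estimate with shifting exponents rather than plain dominated convergence. The perturbation case in the upper bound is the next most delicate issue, but the dichotomy above handles it cleanly and is in the same spirit as the density-type argument used in Remark \ref{nem}.
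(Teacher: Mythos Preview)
Your proposal is correct and follows the same two-inequality strategy as the paper: the lower bound via weak limits of minimizers $u_n$ for $\lambda^{\ast}(q_n)$, and the upper bound by perturbing a minimizer for $\lambda^{\ast}(p)$ with a bump $\psi$ supported in $\Omega_a^+$. Two minor remarks on execution: in the upper bound the paper avoids your dichotomy entirely by noting that $\int_\Omega a(u+t\psi)^p > \int_\Omega au^p \geq 0$ for every $t>0$ (since $a>0$ and $(u+t\psi)^p>u^p$ on $\{\psi>0\}$), so no case split on $\int_\Omega au^{p-1}\psi$ is needed; conversely, your explicit Holder argument for the variable-exponent convergence $\int_\Omega a|u_n|^{q_n}\to\int_\Omega a|u_0|^p$ supplies a step the paper simply asserts.
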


\begin{proof}
Let $q_{n}\rightarrow p^{-}$ and $\lambda^{\ast}(q_{n})=\int_{\Omega}|\nabla
u_{n}|^{p}$ with $\int_{\Omega}|u_{n}|^{p}=1$ and $\int_{\Omega}%
a|u_{n}|^{q_{n}}\geq0$. Then $\{u_{n}\}$ is bounded in $X$, so we can assume
that $u_{n}\rightharpoonup u_{0}$ in $X$, for some $u_{0}$. It follows that
$\int_{\Omega}|u_{0}|^{p}=1$ and $\int_{\Omega}au_{0}^{p}=\lim\int_{\Omega
}au_{n}^{q_{n}}\geq0$, so that
\[
\lambda^{\ast}(p)\leq\int_{\Omega}|\nabla u_{0}|^{p}\leq\liminf\int_{\Omega
}|\nabla u_{n}|^{p}=\liminf\lambda^{\ast}(q_{n}).
\]
Let now $u$ be such that $\lambda^{\ast}(p)=\int_{\Omega}|\nabla u|^{p}$,
$\int_{\Omega}|u|^{p}=1$ and $\int_{\Omega}a|u|^{p}\geq0$. We choose some
$B\subset\Omega_{a}^{+}$ and $\psi\in C_{0}^{1}(B)$ such that $\psi>0$ in $B$.
Then $\int_{\Omega}a|u+t\psi|^{p}>0$ for any $t>0$, so that $\lim\int_{\Omega
}a|u+t\psi|^{q_{n}}>0$ for any $t>0$. It follows that $\limsup\lambda^{\ast
}(q_{n})\leq\frac{\int_{\Omega}|\nabla(u+t\psi)|^{p}}{\int_{\Omega}%
|(u+t\psi)|^{p}}$ for any $t>0$. Letting $t\rightarrow0$, we obtain
$\limsup\lambda^{\ast}(q_{n})\leq\lambda^{\ast}(p)$, which concludes the
proof.
\end{proof}

Since $q$ is not fixed anymore, we shall write $m_{\pm}=m_{\pm}(q)$.

\begin{proposition}
\label{cs} For any $\lambda<\lambda^{\ast}(p)$ there exists $q_{0}%
=q_{0}(\lambda,a)\in\lbrack1,p)$ such that any nonnegative minimizer
associated to $m_{+}(q)$ belongs to $\mathcal{P}^{\circ}$ for $q\in(q_{0},p)$.
The same conclusion holds for nonnegative minimizers associated to $m_{-}(q)$
if $\int_{\Omega}a\phi_{1}^{p}<0$ and $\lambda_{1}<\lambda<\lambda^{\ast}(p)$.
Moreover, one can choose $q_{0}(\lambda,a)$ as a decreasing function of
$\lambda$ for $\lambda<0$.


\end{proposition}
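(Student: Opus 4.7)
My plan is to argue by contradiction, taking limits as $q\to p^-$. Fix $\lambda<\lambda^{\ast}(p)$; by Proposition \ref{le} we have $\lambda<\lambda^{\ast}(q)$ for $q$ close to $p$, so nonnegative minimizers $V_{\pm}(q)$ of $m_{\pm}(q)$ exist there (for $V_-$, the condition $\int_\Omega a\phi_1^p<0$ persists for $q$ close to $p$, keeping $m_-(q)<0$ attained, cf.\ Lemma \ref{l1}). If the assertion for $V_+$ fails, then there exist $q_n\to p^-$ and nonnegative minimizers $V_n$ of $m_+(q_n)$ with $V_n\notin\mathcal{P}^\circ$. I will show, up to a subsequence, that $V_n\to V_0$ in $C^1(\overline{\Omega})$, where $V_0$ is a positive minimizer of the limiting weighted eigenvalue problem $\inf\{E_\lambda(u):\int_\Omega a|u|^p=1\}$. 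Since $\mathcal{P}^\circ$ is open in $C^1(\overline{\Omega})$, this yields a contradiction. Via $U_{\pm}=(\pm m_{\pm})^{-1/(p-q)}V_{\pm}$ the conclusion transfers from $V_{\pm}$ to $U_{\pm}$, and the argument for $V_-$ is entirely parallel.

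The first technical step is an upper bound $\limsup m_+(q_n)<\infty$, obtained by testing against a fixed smooth $u_0\geq 0$ supported in $\Omega_a^+$ with $\int_\Omega au_0^p=1$: the rescaled family $u_0/(\int_\Omega au_0^{q_n})^{1/q_n}$ lies in the appropriate $\mathcal{S}_+$ and has energy tending to $E_\lambda(u_0)$. To bound $\{V_n\}$ in $X$, I would mimic the proof of Lemma \ref{la}(2): if $\|V_n\|\to\infty$, the rescaled sequence $w_n:=V_n/\|V_n\|$ converges weakly to some $w_0$, and one shows $w_0\not\equiv 0$ (otherwise $w_n\to 0$ in $X$ by energy convergence), $\int_\Omega a|w_0|^p=0$, and $E_\lambda(w_0)\leq 0$, contradicting Lemma \ref{la}(3) applied at the exponent $p$ (available because $\lambda<\lambda^{\ast}(p)$). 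The only subtlety is the moving exponent, which is controlled via Sobolev compactness and $q_n\to p$: whenever $u_n\rightharpoonup u_0$ and $\{u_n\}$ is bounded in $X$, one checks $\int_\Omega a|u_n|^{q_n}\to\int_\Omega a|u_0|^p$.

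With $V_n\rightharpoonup V_0$ in $X$, the upper bound above (now applied with $u_0$ a minimizer of the limit problem) matches the lower bound from weak lower semicontinuity, so $E_\lambda(V_n)\to E_\lambda(V_0)=:\mu>0$, which forces strong convergence in $X$. Each $V_n$ solves the Lagrange equation $-\Delta_p V_n=\lambda V_n^{p-1}+m_+(q_n)\,aV_n^{q_n-1}$ with uniformly bounded right-hand side, so standard $L^\infty$ bounds (Moser iteration, using the subhomogeneous growth) together with the $C^{1,\alpha}$ regularity of \cite{db,L} give uniform $C^{1,\alpha}(\overline{\Omega})$ bounds on $V_n$, upgrading the convergence to $C^1(\overline{\Omega})$. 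The limit $V_0\geq 0$ is nontrivial and solves $-\Delta_p V_0=(\lambda+\mu a)V_0^{p-1}$ with bounded coefficient, so the strong maximum principle and Hopf lemma \cite{Va} place $V_0\in\mathcal{P}^\circ$, completing the contradiction. I expect this $C^1$ upgrade to be the main obstacle, since it chains together $L^\infty$ bounds with quasilinear $C^{1,\alpha}$ theory.

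For the monotonicity of $q_0(\cdot,a)$ on $(-\infty,0)$, fix $\lambda_1<\lambda_2<0$ and $q\in(q_0(\lambda_1,a),p)$, so that $U_+(\lambda_1,q)\in\mathcal{P}^\circ$. By Proposition \ref{pin}, $U_+(\lambda_2,q)\geq U_+(\lambda_1,q)>0$ in $\Omega$. In the Dirichlet case, since both vanish on $\partial\Omega$, the ordering forces $\partial_\nu U_+(\lambda_2,q)\leq \partial_\nu U_+(\lambda_1,q)<0$ on $\partial\Omega$; in the Neumann case positivity up to the boundary is immediate. Hence $U_+(\lambda_2,q)\in\mathcal{P}^\circ$, and one may take $q_0(\lambda_2,a)\leq q_0(\lambda_1,a)$.
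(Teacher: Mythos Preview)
Your proposal is correct and follows essentially the same contradiction scheme as the paper: bound $m_+(q_n)$ by a test function, bound $\{V_n\}$ in $X$, upgrade weak to strong convergence, pass to the limit in the Euler--Lagrange equation, apply the strong maximum principle to place $V_0\in\mathcal{P}^\circ$, and use $C^{1,\alpha}$ regularity to reach the contradiction; the monotonicity argument via Proposition~\ref{pin} is also the same.

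The only noteworthy difference is how you obtain strong convergence $V_n\to V_0$ in $X$. You match the $\limsup$ bound $m_+(q_n)\le \mu$ (from testing against a minimizer of the limit constraint $\int_\Omega a|u|^p=1$) with weak lower semicontinuity to force $E_\lambda(V_n)\to E_\lambda(V_0)$, and then use uniform convexity. The paper instead tests the Euler--Lagrange equation for $V_n$ with $V_n-V_0$ and invokes the $(S_+)$ property of $-\Delta_p$; this avoids identifying $V_0$ as a minimizer of the limit problem and is slightly more direct. Your route has the small bonus of exhibiting $m_+(q_n)\to\mu$ along the way. A minor remark: you correctly adapt (rather than merely cite) Lemma~\ref{la}(2) to varying exponents $q_n$, which the paper glosses over; and in the monotonicity step, avoid using the symbol $\lambda_1$ for one of your two parameters, since it already denotes the first eigenvalue.
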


\begin{proof}
Let $\lambda<\lambda^{\ast}(p)$. By Proposition \ref{le} we know that
$\lambda<\lambda^{\ast}(q)$ for $q$ close enough to $p$. Assume by
contradiction that there exists a sequence $q_{n}\rightarrow p^{-}$ and a
sequence $\{v_{n}\}\subset\mathcal{S}_{+}$ with $0\leq v_{n}\not \in
\mathcal{P}^{\circ}$ and $E_{\lambda}(v_{n})=m_{n}:=m_{+}(q_{n})$. Let us fix
$\phi\in X$ such that $\int_{\Omega}a\phi^{p}=1$. Then $\int_{\Omega}%
a\phi^{q_{n}} \to1$, and it follows that
\[
0<m_{n}\leq E_{\lambda}\left(  \left(  \int_{\Omega}a\phi^{q_{n}}\right)
^{-\frac{1}{q_{n}}}\phi\right)  \rightarrow E_{\lambda}(\phi),
\]
i.e. $\{m_{n}\}$ is bounded.
By Lemma \ref{la}(2) we know that $\left\{  v_{n}\right\}  $ is bounded in
$X$, and we can assume that $v_{n}\rightharpoonup v_{0}$ in $X$, and
$\int_{\Omega}av_{0}^{p}=\lim\int_{\Omega}av_{n}^{q_{n}}=1$, so that
$v_{0}\not \equiv 0$. Note also that $v_{n}$ solves
\[
-\Delta_{p}v_{n}=\lambda v_{n}^{p-1}+m_{n}av_{n}^{q_{n}-1},\quad v_{n}\in X.
\]
Taking $v_{n}-v_{0}$ as test function in this equation, we see that
\[
\int_{\Omega}|\nabla v_{n}|^{p-2}\nabla v_{n}\nabla(v_{n}-v_{0})\rightarrow0.
\]
By the $(S_{+})$ property of the $p$-Laplacian, we deduce that $v_{n}%
\rightarrow v_{0}$ in $X$. Since $\{m_{n}\}$ is bounded, we can assume that
$m_{n}\rightarrow m_{0}$ for some $m_{0}\geq0$. Taking the limit in the above
equation, we see that $v_{0}\geq0$ solves
\[
-\Delta_{p}v_{0}=\lambda v_{0}^{p-1}+m_{0}av_{0}^{p-1},\quad v_{0}\in X.
\]
Since the strong maximum principle applies to this equation, we infer that
$v_{0}\in\mathcal{P}^{\circ}$. Finally, by elliptic regularity, we find that
$v_{n}\rightarrow v_{0}$ in $C^{1}(\overline{\Omega})$ and consequently
$v_{n}\in\mathcal{P}^{\circ}$ for $n$ large enough, which yields a
contradiction. Therefore, setting
\[
q_{0}(\lambda):=\inf\{q_{0}\in
(1,p):\mbox{nonnegative minimizers for $m_+(q)$ lie in }\mathcal{P}^{\circ
}\ \forall q\in(q_{0},p)\},
\]
we have proved that $1\leq q_{0}(\lambda)<p$. Thanks to Corollary \ref{un},
for $\lambda<0$ we have
\[
q_{0}(\lambda)=\inf\{q_{0}\in(1,p):U_{+}(\lambda,q)\in\mathcal{P}^{\circ
}\ \forall q\in(q_{0},p)\}.
\]
Moreover, if $\lambda<\lambda^{\prime}<0$ then, by Proposition \ref{pin},
$U_{+}(\lambda,q)\leq U_{+}(\lambda^{\prime},q)$, so that $U_{+}%
(\lambda^{\prime},q)\in\mathcal{P}^{\circ}$ for $q_{0}(\lambda)<q<p$. It
follows that $q_{0}(\lambda^{\prime})\leq q_{0}(\lambda)$.

Assume now that $\int_{\Omega}a\phi_{1}^{q}<0$, and $\{v_{n}\}\subset
\mathcal{S}_{-}$, with $0 \leq v_{n}\not \in \mathcal{P}^{\circ}$ and
$E_{\lambda}(v_{n})=m_{n}:=m_{-}(\lambda,q_{n})$ for every $n$. Since
$\lambda_{1}<\lambda<\lambda^{\ast}(p)$ we have $m_{n}<0$ for every $n$. From
Lemma \ref{la}(2) we deduce that $\{v_{n}\}$ is bounded in $X$. Arguing as in
the first part of the proof we reach a contradiction. 
\end{proof}

Let us prove now that $U_{+}$ becomes positive as $a^{-}$ approaches zero. We
fix $\lambda<\lambda_{1}$, $q$ and $a$. Set, for $\delta>0$, $a_{\delta
}:=a^{+}-\delta a^{-}$, and
\[
M_{\delta}:=\inf\left\{  I_{\delta}(u):u\in X,\int_{\Omega}a_{\delta}%
|u|^{q}>0\right\}  ,
\]
where
\[
I_{\delta}(u):=\frac{1}{p}E_{\lambda}(u)-\frac{1}{q}\int_{\Omega}a_{\delta
}(x)|u|^{q}%
\]
for $u\in X$.

\begin{proposition}
\label{pa} Let $\lambda<\lambda_{1}$ and $a\in C(\overline{\Omega})$. There
exists $\delta_{0}>0$ such that any nonnegative minimizer associated to
$M_{\delta}$ belongs to $\mathcal{P}^{\circ}$ for $0<\delta<\delta_{0}$.
\end{proposition}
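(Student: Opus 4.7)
The plan is to mimic the scheme used in the proof of Proposition \ref{cs}, arguing by contradiction. Suppose the conclusion fails: then there exist $\delta_n \to 0^+$ and nonnegative minimizers $U_n$ of $M_{\delta_n}$ with $U_n \notin \mathcal{P}^\circ$. I will show that, along a subsequence, $U_n \to U_0$ in $C^1(\overline{\Omega})$ with $U_0 \in \mathcal{P}^\circ$, which contradicts the openness of $\mathcal{P}^\circ$ in $C^1(\overline{\Omega})$.

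Since $a$ changes sign, $a^+ \not\equiv 0$, so I fix $u_0 \geq 0$ in $X$ with $\int_\Omega a^+ u_0^q > 0$. For $t>0$ small, $tu_0 \in \mathcal{A}_+(a_\delta)$ for every $\delta \geq 0$ and $I_\delta(tu_0) \to I_0(tu_0) < 0$ as $\delta \to 0^+$, so there exists $c_0>0$ with $M_{\delta_n} \leq -c_0$ for $n$ large. Since $\lambda < \lambda_1 \leq \lambda^*(a_{\delta_n})$, Lemma \ref{l1}(1) guarantees that each $U_n$ is a nonnegative solution of $(P_{\lambda, a_{\delta_n}})$. Moreover, the coercivity of $E_\lambda$ on $X$ (which holds because $\lambda < \lambda_1$), combined with $I_{\delta_n}(U_n) \leq 0$ and $\int_\Omega a_{\delta_n} U_n^q \leq \|a^+\|_\infty \|U_n\|_q^q$, shows that $\{U_n\}$ is bounded in $X$.

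Up to a subsequence, $U_n \rightharpoonup U_0$ in $X$ for some $U_0 \geq 0$. Testing the equation $-\Delta_p U_n = \lambda U_n^{p-1} + a_{\delta_n} U_n^{q-1}$ against $U_n - U_0$ and using Lemma \ref{la}(1) together with the uniform convergence $a_{\delta_n} \to a^+$, I obtain $\int_\Omega |\nabla U_n|^{p-2} \nabla U_n \cdot \nabla(U_n - U_0) \to 0$, so the $(S_+)$ property of $-\Delta_p$ yields $U_n \to U_0$ strongly in $X$. Passing to the limit, $U_0$ solves
$$-\Delta_p U_0 = \lambda U_0^{p-1} + a^+(x)\, U_0^{q-1} \quad \text{in } \Omega,$$
with $\mathbf{B}U_0 = 0$, and strong convergence yields $I_0(U_0) = \lim I_{\delta_n}(U_n) \leq -c_0 < 0$, so $U_0 \not\equiv 0$.

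Since $a^+ \geq 0$, the right-hand side above is nonnegative, hence $-\Delta_p U_0 \geq \lambda U_0^{p-1}$ in $\Omega$. The strong maximum principle and Hopf lemma of \cite{Va} therefore force $U_0 \in \mathcal{P}^\circ$. By the $C^{1,\alpha}$ regularity results of \cite{db, L}, the convergence upgrades to $U_n \to U_0$ in $C^1(\overline{\Omega})$, and since $\mathcal{P}^\circ$ is open in $C^1(\overline{\Omega})$, we must have $U_n \in \mathcal{P}^\circ$ for $n$ large, a contradiction. The most delicate step is securing the nontriviality of the limit $U_0$, which relies on the uniform upper bound $M_{\delta_n} \leq -c_0 < 0$ obtained from a fixed test function supported near $\Omega_a^+$; the remaining ingredients ($(S_+)$ property, elliptic regularity, and strong maximum principle with $a^+ \geq 0$) are standard.
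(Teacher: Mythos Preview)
Your argument is correct and follows essentially the same route as the paper's proof: contradiction, boundedness of the minimizers, the $(S_+)$ property to upgrade to strong convergence, $C^{1,\alpha}$ regularity, and the strong maximum principle applied to the limit equation with weight $a^+$. The only substantive variations are that you obtain boundedness directly from the coercivity of $E_\lambda$ (the paper uses Lemma~\ref{la}(3)) and you establish nontriviality of $U_0$ via a fixed test function, whereas the paper compares with the unique positive minimizer $U$ of $I_0$; both are equally valid. One small slip: the claim ``$tu_0\in\mathcal{A}_+(a_\delta)$ for every $\delta\geq 0$'' is false for a generic $u_0$ with $\int_\Omega a^+u_0^q>0$, but it becomes true once you take $u_0$ supported in $\Omega_a^+$ (as your closing sentence indicates), or simply restrict to $\delta$ small, which is all you need.
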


\begin{proof}
For $\delta>0$ small enough we have $\int_{\Omega}a_{\delta}\phi_{1}^{q}\geq
0$, and thus $\lambda^{\ast}(a_{\delta})=\lambda_{1}$. It follows that
$M_{\delta}$ is achieved by $U_{+}(\delta)$. Assume by contradiction that
$\delta_{n}\rightarrow0$ and $u_{n}:=U_{+}(\delta_{n})\not \in \mathcal{P}%
^{\circ}$. By Lemma \ref{la}(3) we have
\[
C_{\lambda}\Vert u_{n}\Vert^{p}\leq E_{\lambda}(u_{n})\leq\int_{\Omega}%
a^{+}u_{n}^{q}\leq C\Vert u_{n}\Vert^{q}%
\]
for some $C,C_{\lambda}>0$ and every $n$, so that $\{u_{n}\}$ is bounded in
$X$. We can assume that $u_{n}\rightharpoonup u_{0}$ in $X$. Taking
$u_{n}-u_{0}$ as test function in the equation solved by $u_{n}$, we obtain
that
\[
\int_{\Omega}|\nabla u_{n}|^{p-2}\nabla u_{n}\nabla(u_{n}-u_{0})\rightarrow0.
\]
By the $(S_{+})$ property of the $p$-Laplacian, we infer that $u_{n}%
\rightarrow u_{0}$ in $X$, and by elliptic regularity, we have $u_{n}%
\rightarrow u_{0}$ in $C^{1}(\overline{\Omega})$. Since $a_{n}\rightarrow
a^{+}$ in $C(\overline{\Omega})$, we see that $u_{0}$ solves
\[
-\Delta_{p}u_{0}=\lambda u_{0}^{p-1}+a^{+}(x)u_{0}^{q-1},\quad u_{0}%
\geq0,\quad u_{0}\in X.
\]
By the strong maximum principle, we have either $u_{0}\equiv0$ or $u_{0}%
\in\mathcal{P}^{\circ}$. Let us show that $u_{0}\not \equiv 0$, in which case
we obtain a contradiction with $u_{n}\not \in \mathcal{P}^{\circ}$. Recall
that $u_{n}$ is the unique nonnegative global minimizer of $I_{\delta_{n}}$.
Let $U$ be the unique positive global minimizer (cf. \cite{DS}) of
\[
I_{0}(u)=\frac{1}{p}E_{\lambda}(u)-\frac{1}{q}\int_{\Omega}a^{+}%
(x)|u|^{q},\quad u\in X.
\]
Then
\[
I_{0}(u_{0})=\lim I_{\delta_{n}}(u_{n})\leq\lim I_{\delta_{n}}(U)=I_{0}(U)<0,
\]
which shows that $u_{0}\equiv U\not \equiv 0$. The proof is complete. 
\end{proof}

\begin{remark}
It is clear that the previous result does not apply to $U_{-}$ since this
solution exists only if $\int_{\Omega}a\phi_{1}^{q}<0$, which prevents $a^{-}$
to be arbitrarily small. Similarly, it does not apply to $U_{+}$ if
$\lambda>\lambda_{1}$, since in this case, for $\delta>0$ small, we have
$\lambda_{1}=\lambda^{*}$ and consequently $M_{\delta}$ is not achieved (see
Remark \ref{nem}).
\end{remark}

\section{Dead core solutions}

We prove now Theorem \ref{dc}, which is a direct consequence of Proposition
\ref{213} below. Let us stress the dependence on $a$ and write $\lambda^{\ast
}(a)$, $\mathcal{A}_{+}(a)$, $U_{+}(a)$, and $\left(  P_{\lambda,a}\right)  $
instead of $\lambda^{\ast}$, $\mathcal{A}_{+}$, $U_{+}$, and $\left(
P_{\lambda}\right)  $ respectively ($q$ is fixed).

\begin{lemma}
\label{bound}Let $a\in C(\overline{\Omega})$ and $\lambda<\lambda^{\ast}(a)$.
There exists a constant $K=K(\lambda,a)>0$ such that any weak subsolution $u$
of $\left(  P_{\lambda,a}\right)  $ lying in $\mathcal{A}_{+}(a)$ satisfies
$\Vert u\Vert_{\infty}\leq K$.
\end{lemma}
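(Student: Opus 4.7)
The plan is two-stage: first extract a uniform $X$-bound from the subsolution inequality using the coercivity supplied by Lemma~\ref{la}(3), and then upgrade to an $L^\infty$-bound by a standard Moser iteration, exploiting the subhomogeneous growth $q<p$ of the right-hand side.

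For the first stage, since $u\geq 0$ lies in $X$, it is admissible as its own test function in \eqref{def2}, so
\[
E_\lambda(u)\;\leq\;\int_\Omega a\,u^q\;\leq\;\|a\|_\infty\,\|u\|_q^q\;\leq\;C\,\|a\|_\infty\,\|u\|^q,
\]
where the last estimate follows from H\"older and the Sobolev embedding. On the other hand, $u\in\mathcal{A}_+(a)\subset\overline{\mathcal{A}_+(a)}$ and $\lambda<\lambda^{\ast}(a)$, so Lemma~\ref{la}(3) yields $C_\lambda>0$ with $E_\lambda(u)\geq C_\lambda\|u\|^p$. Combining the two estimates and using $q<p$ gives $\|u\|\leq R$ for some $R=R(\lambda,a)$.

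For the second stage, I would rewrite the subsolution inequality as $-\Delta_p u\leq c(1+u^{p-1})$ in the weak sense, which is valid with $c=c(\lambda,\|a\|_\infty)$ because $u^{q-1}\leq 1+u^{p-1}$ for $u\geq 0$ (using $q\leq p$). A standard Moser iteration then produces an estimate of the form
\[
\|u\|_\infty\;\leq\;C\bigl(1+\|u\|_p^{\sigma}\bigr),
\]
with $C,\sigma$ depending only on $p,q,\lambda,\|a\|_\infty,\Omega$; combined with the first stage this yields the desired $\|u\|_\infty\leq K(\lambda,a)$.

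The main obstacle is carrying out the Moser iteration cleanly in both the Dirichlet and the Neumann frameworks. In the Dirichlet case, the truncated powers $\min(u,M)^{1+\beta(p-1)}$ lie in $W^{1,p}_0(\Omega)$, so they are directly admissible as test functions and the Sobolev inequality for $W^{1,p}_0$ applies cleanly. In the Neumann case, no zero boundary trace is available, so one must instead test with shifted truncations such as $(u-k)^+\min(u,M)^{\beta(p-1)}$ for $k\geq 1$, which belong to $W^{1,p}(\Omega)$ whenever $u$ does, and apply the Sobolev inequality to $(u-k)^+$. In either case the subhomogeneity $q<p$ is what closes the iteration: on $\{u\geq 1\}$ the sublinear term $u^{q-1}$ is absorbed into $u^{p-1}$, while on $\{u<1\}$ it is uniformly bounded, so no critical growth obstruction arises, and the iterated estimate can be pushed through a geometric sequence of exponents $\beta_k\to\infty$ to reach $L^\infty$.
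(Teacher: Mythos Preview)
Your proof is correct and follows essentially the same approach as the paper's: first test with $u$ itself and combine the subsolution inequality with the coercivity estimate of Lemma~\ref{la}(3) to get a uniform $X$-bound, then upgrade to $L^\infty$ by elliptic regularity. The paper simply writes ``by a bootstrap argument we get the conclusion'' for the second stage, whereas you spell out the Moser iteration and the Dirichlet/Neumann distinction in more detail than the paper does.
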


\begin{proof}
If $u \in
\mathcal{A}_{+}(a)$ is a subsolution of $\left(  P_{\lambda,a}\right)  $ then by Lemma \ref{la} (3), we have
\[
C_{\lambda}(a)\Vert u\Vert^{p}\leq E_{\lambda}(u)\leq\int_{\Omega}au^{q}\leq
C(a)\Vert u\Vert^{q},
\]
for some $C_{\lambda}(a),C(a)>0$. Thus $\Vert u\Vert\leq\left(  C(a)C_{\lambda
}(a)^{-1}\right)  ^{\frac{1}{p-q}}$, and by a bootstrap argument we get the
conclusion.
\end{proof}

Let $\lambda_{\ast}=\lambda_{\ast}\left(  a\right)  :=\inf\left\{
\int_{\Omega}|\nabla u|^{p}:u\in X,\int_{\Omega}|u|^{p}=1,a^{-}u\equiv
0\right\}  $. Note that $\lambda_{1}\leq\lambda^{\ast}(a)\leq\lambda_{\ast
}(a)$. Recall that $\Omega_{a}^{-}:=\{x\in\Omega:a<0\}$. The next result is
based on the comparison of a given solution with a local barrier function
vanishing in a prescribed set.

\begin{proposition}
\label{213}Assume that $a\in C(\overline{\Omega})$ changes sign,
$\Omega^{\prime}\Subset\Omega_{a}^{-}$, and $a_{n}:=a^{+}-na^{-}$. There
exists $n_{0}=n_{0}\left(  a,p,q,N,\Omega^{\prime},\lambda\right)  >0$ such
that any solution of $\left(  P_{\lambda,a_{n}}\right)  $ lying in
$\mathcal{A}_{+}\left(  a_{n}\right)  $ vanishes in $\Omega^{\prime}$ for all
$n\geq n_{0}$, provided one of the following conditions hold:

\begin{enumerate}
\item $\lambda\leq0$ (in this case $n_{0}$ does not depend on $\lambda$).

\item $p=2$ and $\lambda<\lambda_{\ast}\left(  a\right)  $.
\end{enumerate}
\end{proposition}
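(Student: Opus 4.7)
The plan is a barrier argument: at each point $x_0\in\overline{\Omega'}$, I would construct a local supersolution $w$ of $(P_{\lambda,a_n})$ on a ball $B_{r_0}(x_0)\Subset\Omega_a^-$ that vanishes at $x_0$ and dominates any solution $u\in\mathcal{A}_+(a_n)$ on $\partial B_{r_0}(x_0)$. A weak comparison principle for the operator $v\mapsto -\Delta_p v-\lambda v^{p-1}$ would then force $u\leq w$ on $B_{r_0}(x_0)$, whence $u(x_0)=0$. Covering the compact set $\overline{\Omega'}$ by finitely many such balls would complete the proof, provided the constants are uniform in $x_0$.

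The first step is a uniform-in-$n$ bound $\|u\|_\infty\leq K$ for any solution $u\in\mathcal{A}_+(a_n)$ of $(P_{\lambda,a_n})$. This follows from Lemma~\ref{bound} once one observes that the admissible set in the definition of $\lambda^*(a_n)$ shrinks as $n$ grows (the constraint $\int_\Omega a_n|u|^q\geq 0$ becomes more restrictive), so $\lambda^*(a_n)$ is non-decreasing in $n$ and in particular bounded below by $\lambda^*(a)>\lambda$ under either of the stated hypotheses. Tracking the constants $C_\lambda(a_n),C(a_n)$ in Lemma~\ref{la}(3) and in the bootstrap within the proof of Lemma~\ref{bound} then shows that $K$ may be chosen independently of $n$.

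Next, by compactness of $\overline{\Omega'}\Subset\Omega_a^-$, I would fix $r_0>0$ such that $B_{2r_0}(x_0)\subset\Omega_a^-$ for every $x_0\in\overline{\Omega'}$, and set $c_0:=\inf\{a^-(x):\mathrm{dist}(x,\overline{\Omega'})\leq 2r_0\}>0$. The barrier at $x_0$ would be
\[
\alpha:=\frac{2p}{p-q},\qquad A:=K\,r_0^{-\alpha},\qquad w(x):=A\,|x-x_0|^{\alpha},
\]
so that $w(x_0)=0$ and $w\equiv K$ on $\partial B_{r_0}(x_0)$. The algebraic identity $\alpha(p-1)-p=\alpha(q-1)+p$ shows, via a radial computation of $\Delta_p w$, that $-\Delta_p w$ carries an extra factor of $|x-x_0|^{p}$ relative to $w^{q-1}$. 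This is precisely the reason for using the exponent $2p/(p-q)$ in place of the usual $p/(p-q)$: the extra factor allows the indefinite-signed term $\lambda w^{p-1}$ (of order $|x-x_0|^{\alpha(q-1)+2p}$) to be absorbed into $-nc_0 w^{q-1}$ once $r_0$ is small. Using $a_n(x)\leq -nc_0$ on $B_{r_0}(x_0)$, the supersolution inequality $-\Delta_p w\geq \lambda w^{p-1}+a_n w^{q-1}$ reduces to
\[
nc_0\;\geq\;A^{p-q}\bigl(C_{N,p,\alpha}\,r_0^{p}+\lambda\,r_0^{2p}\bigr)
\]
for a positive constant $C_{N,p,\alpha}$, and holds for every $n\geq n_0$ with $n_0$ depending only on $a,p,q,N,r_0,K$ (and additionally on $\lambda$ in alternative~(2), but not in alternative~(1), since there $\lambda\,r_0^{2p}\leq 0$).

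The main obstacle is invoking the weak comparison principle for $v\mapsto -\Delta_p v-\lambda v^{p-1}$. When $\lambda\leq 0$ this is standard, following from strict monotonicity of the operator on $W^{1,p}(\Omega)$; when $p=2$ and $\lambda<\lambda_*(a)$, I would shrink $r_0$ so that $\lambda$ lies below the first Dirichlet eigenvalue of $-\Delta$ on $B_{r_0}$, whereupon the comparison reduces to the classical linear principle. As the authors note, this principle is known to fail for $\lambda>0$ and $p\neq 2$, which accounts for the dichotomy in the statement. Once the comparison is available, it yields $u\leq w$ on $B_{r_0}(x_0)$, hence $u(x_0)=0$, and a finite cover of $\overline{\Omega'}$ closes the argument with a uniform $n_0$.
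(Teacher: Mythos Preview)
Your approach is essentially the paper's: a local barrier plus the weak comparison principle for $\mathcal{L}v=-\Delta_p v-\lambda v^{p-1}$. Your barrier $A|x-x_0|^{2p/(p-q)}$ is exactly the $t=0$ case of the paper's $k(|x-x_0|^2-t^2)^{p/(p-q)}$; the paper keeps $t>0$ so that the barrier vanishes on a whole ball $B_t(x_0)$, which makes the finite-cover step genuine. With your barrier, each application yields only $u(x_0)=0$, so a finite cover of $\overline{\Omega'}$ by balls would give $u=0$ only at the finitely many centers. What actually closes your argument is not a covering but the observation you already made: $r_0$, $c_0$, $A$ and hence $n_0$ are chosen uniformly in $x_0\in\overline{\Omega'}$, so the conclusion $u(x_0)=0$ holds simultaneously at every point. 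The ``covering'' sentence should be replaced accordingly.

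There is one small gap in your treatment of the uniform bound. You assert that $\lambda^*(a)>\lambda$ under either hypothesis, but in case~(2) the assumption is $\lambda<\lambda_*(a)$, and since $\lambda^*(a)\leq\lambda_*(a)$ this does not follow (e.g.\ if $\int_\Omega a\phi_1^q>0$ then $\lambda^*(a)=\lambda_1$, while $\lambda$ may lie in $(\lambda_1,\lambda_*(a))$). The paper handles this by showing $\lambda^*(a_n)\to\lambda_*(a)$ as $n\to\infty$, so that $\lambda<\lambda^*(a_n)$ for $n$ large; combined with the monotonicity $\mathcal{A}_+(a_n)\subset\mathcal{A}_+(a_m)$ for $n\geq m$ (which you correctly noted) this yields the uniform coercivity constant. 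Once this is fixed, your argument goes through; the remaining differences (you fix $A$ and take $n$ large, the paper lets $k\sim n^{1/(p-q)}\to\infty$ to dominate the $L^\infty$ bound on $\partial B$) are cosmetic.
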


\textit{Proof}. The proof is inspired by the computations in \cite[Theorem
3.2]{ans}.

\begin{enumerate}
\item Let $B=B_{R}(x_{0})$ be a ball with $B\Subset\Omega_{a}^{-}$. Let $n>0$
be large so that $\int_{\Omega}a_{n}\phi_{1}^{q}<0$ (and so, $\lambda^{\ast
}\left(  a_{n}\right)  >0$), and let $u_{n}\in\mathcal{A}_{+}\left(
a_{n}\right)  $ be a solution of $\left(  P_{\lambda,a_{n}}\right)  $. Then
$\mathcal{L}u_{n}:=-\Delta_{p}u_{n}-\lambda u_{n}^{p-1}=-na^{-}u_{n}^{q-1}$ in
$B$. For $k>0$ and $\beta>1$ to be chosen later, let us define
\[
w(x):=k\left(  \left\vert x-x_{0}\right\vert ^{2}-t^{2}\right)  ^{\beta}%
\quad\text{where}\quad0\leq t<R,\quad t\leq\left\vert x-x_{0}\right\vert \leq
R.
\]
In order to avoid overloading the notation, we set $r:=\left\vert
x-x_{0}\right\vert $ and write $w=w\left(  r\right)  =k\left(  r^{2}%
-t^{2}\right)  ^{\beta}$. We have that
\[
\left\vert \nabla w\right\vert ^{p-2}\nabla w=\left(  2k\beta\right)
^{p-1}\left(  r^{2}-t^{2}\right)  ^{\left(  \beta-1\right)  \left(
p-1\right)  }r^{p-2}\left(  x-x_{0}\right)
\]
and so, after some computations we find that
\begin{align*}
\operatorname{div}\left(  \left\vert \nabla w\right\vert ^{p-2}\nabla
w\right)   &  =\left(  2k\beta\right)  ^{p-1}\left(  r^{2}-t^{2}\right)
^{\beta\left(  p-1\right)  -p}r^{p-2}[2\left(  \beta-1\right)  \left(
p-1\right)  r^{2}\\
&  +\left(  p-2+N\right)  \left(  r^{2}-t^{2}\right)  ].
\end{align*}
Thus, since $\lambda\leq0$, for $t\leq r\leq R$ we get%
\begin{align}
\mathcal{L}w  &  \geq-\Delta_{p}w\label{lapa}\\
&  \geq-\left(  2k\beta\right)  ^{p-1}(r^{2}-t^{2})^{\beta\left(  p-1\right)
-p}R^{p-2}[2\left(  \beta-1\right)  \left(  p-1\right)  R^{2}\nonumber\\
&  +\left(  p-2+N\right)  (R^{2}-t^{2})].\nonumber
\end{align}
Let us now choose%
\begin{align}
\beta &  :=\frac{p}{p-q},\quad0<\underline{a}:=\min_{\overline{B}}%
a^{-},\mbox{ and }\label{lapa2}\\
k  &  :=\left(  \frac{n\underline{a}}{\left(  2\beta\right)  ^{p-1}%
R^{p-2}\left(  2\left(  \beta-1\right)  \left(  p-1\right)  R^{2}+\left(
p-2+N\right)  \left(  R^{2}-t^{2}\right)  \right)  }\right)  ^{\frac{1}{p-q}%
}.\nonumber
\end{align}
We observe that $\beta\left(  p-1\right)  -p=\beta\left(  q-1\right)  $.
Taking into account this, (\ref{lapa}) and (\ref{lapa2}), one can check that
$\mathcal{L}w\geq-na^{-}w^{q-1}$ in $B\diagdown B_{t}(x_{0})$. We now extend
$w$ by zero to the whole $\overline{B}$. Then $w\in C^{1}(\overline{B})$,
$w^{\prime}\left(  t\right)  =0$ and hence, in the weak sense, $\mathcal{L}%
w\geq-na^{-}w^{q-1}$ in $B$. Also, by (\ref{lapa2}) and Lemma \ref{bound}
(applied with $a_{n}$ and $0$ in place of $a$ and $\lambda$, respectively),
enlarging $n$ if necessary, we may assume that $w>u_{n}$ on $\partial B$. We
claim that, for such $n$, $w\geq u_{n}$ in $B$. Indeed, if this does not
happen then $\mathcal{O}:=\left\{  x\in B:w\left(  x\right)  <u_{n}\left(
x\right)  \right\}  \not =\emptyset$. Since $w>u_{n}$ on $\partial B$ it holds
that $w=u_{n}$ on $\partial\mathcal{O}$. Also, $\mathcal{L}w\geq
\mathcal{L}u_{n}$\ in $\mathcal{O}$. Now, since $\lambda\leq0$, the weak
comparison principle (see e.g. \cite[Theorem 1.2(a)]{Dam}) applies to
$\mathcal{L}$ in $\mathcal{O}$ and says that $w\geq u_{n}$\ in $\mathcal{O}$,
a contradiction. Hence, $w\geq u_{n}$ in $B$ as claimed and, in particular,
$u_{n}\equiv0$ in $B_{t}(x_{0})$.\newline Finally, let $\Omega^{\prime}%
\Subset\Omega_{a}^{-}$ and set $\delta:=\,$dist$\,(\overline{\Omega^{\prime}%
},\partial\Omega_{a}^{-})>0$. Take a finite covering of $\Omega^{\prime}$ of
open balls $B_{x_{i}}\left(  t\right)  $, where $t:=\delta/2$. Now set
$R:=t+\varepsilon$, with $0<\varepsilon<\delta/2$. Then $\Omega^{\prime
}\subset\cup B_{x_{i}}\left(  t\right)  $ and $\cup B_{x_{i}}\left(  R\right)
\Subset\Omega_{a}^{-}$, and hence item (1) follows from the above part of the proof.

\item We proceed similarly. We may assume that $0<\lambda<\lambda_{\ast
}\left(  a\right)  $. We first observe that $\lambda^{\ast}(a_{n}%
)\rightarrow\lambda_{\ast}\left(  a\right)  $ when $n\rightarrow\infty$.
Indeed, first note that $\lambda_{1}\leq\lambda^{\ast}(a_{n})\leq\lambda
_{\ast}\left(  a\right)  $ for every $n$. Let $z_{n}\geq0$ achieve
$\lambda^{\ast}(a_{n})$, i.e.
\[
\int_{\Omega}|\nabla z_{n}|^{2}=\lambda^{\ast}(a_{n}),\quad\int_{\Omega}%
z_{n}^{2}=1,\quad\mbox{and}\quad\int_{\Omega}a_{n}z_{n}^{q}\geq0.
\]
Then $\{z_{n}\}$ is bounded in $X$, so we may assume that $z_{n}%
\rightharpoonup z_{0}$ in $X$, for some $z_{0}$. From $\int_{\Omega}a_{n}%
z_{n}^{q}\geq0$ we deduce that
\[
\int_{\Omega}a^{-}z_{n}^{q}\leq\frac{1}{n}\int_{\Omega}a^{+}z_{n}^{q}\leq
\frac{C}{n},
\]
so that $\int_{\Omega}a^{-}z_{0}^{q}=0$. Hence $a^{-}z_{0}\equiv0$, which
yields
\[
\lambda_{\ast}\left(  a\right)  \leq\int_{\Omega}|\nabla z_{0}|^{2}\leq
\lim\lambda^{\ast}(a_{n})\leq\lambda_{\ast}\left(  a\right)  ,
\]
i.e. $\lambda_{\ast}\left(  a\right)  =\lim\lambda^{\ast}(a_{n})$. Let
$u_{n}\in\mathcal{A}_{+}\left(  a_{n}\right)  $ be a solution of $\left(
P_{\lambda,a_{n}}\right)  $. Then $\mathcal{L}u_{n}:=-\Delta u_{n}-\lambda
u_{n}=-na^{-}u_{n}^{q-1}$ in $\Omega_{a}^{-}$. By the above paragraph, taking
$n$ large enough we may assume that $\lambda<\lambda^{\ast}\left(
a_{n}\right)  $. Now, let $B=B_{R}(x_{0})\Subset\Omega_{a}^{-}$ be a ball with
$R>0$ small, so that $\lambda<\lambda_{1}\left(  B\right)  $. Define $w\in
C^{1}(\overline{B})$ as in item (1), with $\beta$ and $\underline{a}$ as in
(\ref{lapa2}), and%
\begin{equation}
k:=\left(  \frac{n\underline{a}}{2\beta\left(  \left(  \beta-1\right)
2R^{2}+N\left(  R^{2}-t^{2}\right)  \right)  +\lambda\left(  R^{2}%
-t^{2}\right)  ^{2}}\right)  ^{\frac{1}{2-q}}. \label{k2}%
\end{equation}
After some computations we obtain, for $t\leq r\leq R$,%
\begin{align}
\mathcal{L}w  &  =-k\left(  r^{2}-t^{2}\right)  ^{\beta-2}\left[
2\beta\left(  \left(  \beta-1\right)  2r^{2}+N\left(  r^{2}-t^{2}\right)
\right)  +\lambda\left(  r^{2}-t^{2}\right)  ^{2}\right] \label{l2}\\
&  \geq-k\left(  r^{2}-t^{2}\right)  ^{\beta-2}\left[  2\beta\left(  \left(
\beta-1\right)  2R^{2}+N\left(  R^{2}-t^{2}\right)  \right)  +\lambda\left(
R^{2}-t^{2}\right)  ^{2}\right]  .\nonumber
\end{align}
Hence, since $\beta\left(  q-1\right)  =\beta-2$, taking into account
(\ref{k2}) and (\ref{l2}) we infer that $\mathcal{L}w\geq-na^{-}wu^{q-1}$ in
weak sense in $B$. Now, recalling that $\lambda<\min\left(  \lambda^{\ast
}\left(  a_{n}\right)  ,\lambda_{1}\left(  B\right)  \right)  $ and
$\lambda_{1}\left(  B\right)  <\lambda_{1}\left(  \mathcal{O}\right)  $ we may
use Lemma \ref{bound} and apply the weak maximum principle to $\mathcal{L}$ in
$\mathcal{O}$. So, as in (1) we deduce that, enlarging $n$ if necessary,
$w\geq u_{n}$ in $B$ and therefore $u_{n}\equiv0$ in $B_{t}(x_{0})$. The rest
of the proof follows as in (1).\qed

\end{enumerate}


\begin{remark}
One can easily see from the proof above that Theorem \ref{dc} and Proposition
\ref{213} are in fact true for nonnegative weak subsolutions of $\left(
P_{\lambda,a_{n}}\right)  $.
\end{remark}

\end{document}